\theoremstyle{plain}
\newtheorem{Theo}{Theorem}[section]
\newtheorem{lem}[Theo]{Lemma}
\newtheorem{prop}[Theo]{Proposition}
\theoremstyle{plain}
\theoremstyle{definition}
\theoremstyle{remark}
\newtheorem{Rema}[Theo]{Remark}
\newtheorem*{rema*}{Remark}
\newcommand{\NN}{\mathbb{N}}
\newcommand{\RR}{\mathbb{R}}
\newcommand{\rz}{\mathcal{R}_{ij}}
\author[T. Hmidi]{Taoufik Hmidi}
\address{IRMAR, Universit\'e de Rennes 1\\ Campus de
Beaulieu\\ 35~042 Rennes cedex\\ France}
\email{thmidi@univ-rennes1.fr}
\author[F. Rousset]{Fr\'ed\'eric Rousset}
\address{IRMAR, Universit\'e de Rennes 1\\ Campus de
Beaulieu\\ 35~042 Rennes cedex\\ France}
\email{frederic.rousset@univ-rennes1.fr}
\date{}
\begin{document}
\title[Axisymmetric Boussinesq system]
{Global well-posedness for the Euler-Boussinesq system with axisymmetric data}
\maketitle
\begin{abstract}
In this paper we prove  the global well-posedness   for the   three-dimensional Euler-Boussinesq system   with axisymmetric initial data without swirl. This system couples  the Euler equation with a transport-diffusion  equation governing the temperature. \end{abstract}
\tableofcontents
\section{Introduction}
  Boussinesq systems  are   widely used to model the dynamics of the  ocean or  the atmosphere.
    They   arise from the  
    density dependent   fluid  equations by using the so-called
    Boussinesq approximation which consists in neglecting the density dependence in all the terms
     but  the one involving the gravity.   This approximation can be justified   from compressible fluid equations
       by  a simultaneous low Mach number/Froude number limit, we refer to \cite{Feireisl} for a rigorous
        justification. In this paper we shall  assume that the fluid is inviscid 
         but heat-conducting and hence the system
         reads
     
\begin{equation}
    \label{bsintro}
\left\{ \begin{array}{ll}
\partial_{t}v+v\cdot\nabla v +\nabla p=\rho e_z,\quad (t,x)\in \RR_+\times\RR^3,\\
\partial_{t}\rho+v\cdot\nabla\rho-  \Delta \rho=0\\
\textnormal{div}\,  v=0,\\
v_{| t=0}=v_{0},   \quad \rho_{| t=0}=\rho_{0}.
\end{array} \right.
\end{equation}
Here, the velocity  $v=(v^1,v^2,v^3)$ is a three-component vector field with zero divergence,  the scalar
function $\rho$ denotes the density or the temperature and $p$ the pressure of the fluid. 
  Note that we have assumed that the heat conductivity  coefficient
     is one, one can always reduce the problem to this situation by a change of scale
      (as soon as the fluid is assumed to be heat conducting)  
      which is not important for global well-posedness issues with data of arbitrary size
       that we shall consider here.
  The  term  $\rho e_{z}$  where $e_{z}= (0, 0, 1)^t$ takes into account the influence of
   the gravity  and the stratification  on the motion of the fluid. 
Note 
 that  when the initial density $\rho_0$ is identically zero (or constant)  then the above system  reduces  to the
classical  incompressible Euler  equation:
\begin{equation}
      \label{ns}
\left\{ \begin{array}{ll}
\partial_{t}v+v\cdot\nabla v +\nabla p=0\\
\textnormal{div}\,  v=0\\
v_{| t=0}=v_{0}.
\end{array} \right.
\end{equation}
 From this observation, one cannot expect to have a better theory for the Boussinesq system
  than for the  Euler equation.
   For the Euler equation, a well-known criterion for the existence of global smooth
    solution is the Beale-Kato-Majda criterion \cite{bkm}. It  states  that
     the control of the vorticity of the fluid $\omega=  {\rm curl}\,  v$ 
      in  $L^1_{loc}(\mathbb{R}_{+}, L^\infty)$ is sufficient to get global well-posedness. 
     In space dimension two,  the vorticity $\omega$ can be identified to a scalar function
      which solves the transport equation
      $$ \partial_{t} \omega + v \cdot \nabla \omega= 0.$$
       From this transport equation, one immediately gets that
       $$ \|\omega(t) \|_{L^p} \leq \|\omega_{0}\|_{L^p}$$
         for every $p \geq 1$ and hence the global well-posedness follows from the 
       Beale-Kato-Majda criterion.
          
  In a similar way, the global well-posedness  for two-dimensional Boussinesq systems
   which has recently drawn a lot of attention seems   to be in a satisfactory state. More precisely 
 global  well-posedness   has been shown  in  various  function spaces  and for
different viscosities, we refer  for example to \cite{ah,Brenier,
ha,dp1,dp,hk1,hk,HKR1,HKR2, HZ}. In particular, for the model \eqref{bsintro} in 2D, 
   the main idea is that  by  studying carefully the coupling between the two equations 
    and by using the smoothing effect of the second equation, it is still possible to 
      get an a priori estimate  in $L^\infty$ (or in $B_{\infty, 1}^0$ ) for $\omega$ and hence the global well-posedness.
      
 In the three-dimensional case, very few is known:    even for  the Euler equation,   the vorticity $\omega$ solves
  the equation
  \begin{equation}
  \label{vor}
  \partial_{t} \omega + v \cdot \nabla \omega= \omega \cdot \nabla v 
  \end{equation}
 and   the way to control the vortex stretching term $\omega \cdot \nabla v$ in the right-hand side
   is a widely open problem. Nevertheless, 
    a classical situation   where  one can get global existence  is the
     case  that $v$ is axisymmetric without swirl  \cite{Ukhovskii},  \cite{LA}.
    Our aim here is to  study  how this  classical global existence  result for axisymmetric data
     for the Euler equation  can be extended  to  the Boussinesq system
      \eqref{bsintro}.

           Before stating  our main result, let us recall the main ingredient in the  global existence proof for the Euler equation with axisymmetric data.
          The assumption that  the vector field $v$ is axisymmetric without swirl means that    it has
the form:
\begin{equation}
\label{vform}
v(t,x) = v^r(t,r, z)e_r + v^z (t,r, z)e_z,\quad x=(x_{1},x_{2},z),\quad r=({x_{1}^2+x_{2}^2})^{\frac12},
\end{equation}
where  $\big(e_r, e_{\theta} , e_z\big)$ is the local basis of $ \mathbb{R}^3$  corresponding to cylindrical 
 coordinates. Note that we assume that  the velocity is  invariant by rotation around the  vertical axis
(axisymmetric flow) and that the angular component $v^{\theta}$ of $v$ is identically zero
 (without swirl).   For these   flows, 
  the vorticity is under the form 
$$
\omega=(\partial_3v^r-\partial_rv^z) e_{\theta}:=\omega_\theta e_\theta
$$
and  the vortex  stretching term reads
$$ \omega \cdot \nabla v= {v^r \over r }\omega.$$ 
 In particular   $\omega_\theta$ satisfies  the equation
\begin{equation}
 \label{tourbillon}
\partial_t \omega_\theta +v\cdot\nabla\omega_\theta
 =\frac{v^r}{r}\omega_\theta.
\end{equation}
 The crucial fact is then that  the quantity $\zeta:=\frac{\omega_\theta}{r}$  solves  the 
  transport equation
\begin{equation*}
\partial_t \zeta+v\cdot\nabla \zeta=0
\end{equation*}
from which we  get 
 that for every  $p\in[1,\infty]$
\begin{equation}
\label{zetaintro}
\|\zeta(t)\|_{L^p}\le\| \zeta_0\|_{L^p}.
\end{equation}

 It was shown by M. Ukhoviskii and V. Yudovich \cite{Ukhovskii}  and independently by O. A. Ladyzhenskaya \cite{LA}  that these  new a priori estimates  are  strong enough to prevent the formation of singularities in
finite time for axisymmetric flows without swirl. More precisely global existence and uniqueness was established  for axisymmetric initial data with finite energy and satisfying in addition  $\omega_{0}\in L^2\cap L^\infty$ and $\frac{\omega_{0}}{r}\in L^2\cap L^\infty.  $ In terms of Sobolev regularity these assumptions are satisfied if the velocity   $v_{0}$ belongs to $ H^s$ with $s>{7\over 2}$.   This  condition was improved   more recently.
 In \cite{Taira},     it was proven   that global  well-posedness 
  still holds   if  $v_{0}$ is in $H^s$  with  $s>\frac52$
 (note that this is the natural regularity  requirement  for the initial velocity in the Sobolev scale
  in view of the standard  local existence result) and in the recent work  \cite{rd}, Danchin
   has obtained  global existence and uniqueness for initial data such that   $\omega_0\in L^{3,1}\cap L^\infty$ and $\zeta_0\in L^{3,1}  $(here,  $L^{3,1}$  denotes the Lorentz space, 
   the definition of the Lorentz spaces $L^{p,q}$ as interpolation spaces is recalled below).
Their proof is based on the observation that  one can deduce from the Biot-Savart law, 
   the  pointwise estimate
\begin{equation}
\label{pointwise}\Big| {v^r \over r } \Big|  \lesssim  {1 \over | \cdot |^2} * | \zeta |.
\end{equation}
 By convolution laws in Lorentz spaces (again recalled below) and \eqref{zetaintro}, this yields the estimate
  $$  \Big\| {v^r  \over r }(t) \Big\|_{L^\infty}  \lesssim  \|\zeta(t) \|_{L^{3, 1}} \lesssim \| \zeta_{0} \|_{L^{3, 1}}.$$
  Since  one gets from a crude estimate on  \eqref{tourbillon} that
  $$ \|\omega_{\theta}(t) \|_{L^\infty} \leq  \|\omega_{0}(t) \|_{L^\infty} e^{\int_{0}^t  \|v^r / r \|_{L^\infty}},$$
  the global  well-posedness  in   $ H^s$ $s>5/2$ (the assumption
    $\zeta_{0}\in L^{3, 1}$ is automatically satisfied)  then follows  from  the Beale-Kato-Majda criterion.
  It is actually possible, as shown in  \cite{A-H-K},    to get  global well-posedness in the  critical Besov regularity, that is, $v_0\in B_{p,1}^{\frac3p+1}, \,\forall p\in[1,\infty]$, in the sense that  it is possible to propagate globally
   the critical Besov regularity if $\zeta_{0}\in L^{3, 1}$.

 Our aim here is to  extend these global well-posedness results  to the Boussinesq system  \eqref{bsintro}.
    Our main result reads: 
     \begin{Theo}\label{thm1}
 Consider the Boussinesq system \eqref{bsintro}.
 Let $s>\frac52$, $v_0\in H^s$ be an axisymmetric divergence free  vector field without swirl  
 and let $\rho_0$ be an axisymmetric function belonging to $ H^{s-2}\cap L^m$ with $ m>6$ and
  such that  $r^2\rho_0\in L^2$.
 Then there is  a unique global  solution $(v,\rho)$ such that
 $$
(v,\rho)\in\mathcal{C}(\RR_+;H^s)\times \Big(\mathcal{C}(\RR_+;H^{s-2}\cap L^m)\cap L^1_{\textnormal{loc}}(\RR_+; W^{1,\infty})\Big)$$
 $$
\hbox{and}\quad  r^2\rho\in \mathcal{C}(\RR_+;L^{2}).
 $$
  \end{Theo}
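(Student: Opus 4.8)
The strategy is the classical one: reduce global existence to an a~priori bound on the vorticity via the Beale--Kato--Majda criterion, the only new difficulty being the control of the coupling with the temperature. First, by a standard energy/para\-differential scheme (treating $\rho e_z$ as a forcing term in the velocity equation and exploiting the parabolic smoothing of the temperature equation) one constructs a unique maximal solution $(v,\rho)$ on an interval $[0,T^*)$ in the announced class, together with the continuation criterion: if $T^*<\infty$ then $\int_0^{T^*}\big(\|\nabla v\|_{L^\infty}+\|\nabla\rho\|_{L^\infty}\big)\,d\tau=+\infty$, and the axisymmetry without swirl is propagated. One also records the maximum principle for the temperature, $\|\rho(t)\|_{L^p}\le\|\rho_0\|_{L^p}$ for $p\in[2,m]$, and the weighted bound obtained from
\[
\partial_t(r^2\rho)+v\cdot\nabla(r^2\rho)-\Delta(r^2\rho)=2rv^r\rho-4r\partial_r\rho-4\rho ,
\]
whose right-hand side is handled, after testing against $r^2\rho$ and integrating by parts in $r$, by the previous bounds together with $v^r/r\in L^1_{\rm loc}(\RR_+;L^\infty)$ (established below), yielding $r^2\rho\in L^\infty_{\rm loc}(\RR_+;L^2)$.

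The heart of the proof is the construction of a good coupled quantity. Taking the curl of the momentum equation gives $\omega=\omega_\theta e_\theta$ with
\[
\partial_t\omega_\theta+v\cdot\nabla\omega_\theta=\frac{v^r}{r}\,\omega_\theta-\partial_r\rho ,\qquad \partial_t\zeta+v\cdot\nabla\zeta=-\frac{\partial_r\rho}{r},\quad \zeta:=\frac{\omega_\theta}{r},
\]
so the new source $r^{-1}\partial_r\rho$ destroys the pure transport structure of $\zeta$. To restore it I would introduce the operator $\mathcal R:=\frac1r\partial_r(-\Delta)^{-1}$, acting on axisymmetric functions, and set $\Gamma:=\zeta-\mathcal R\rho$. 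Since $\partial_t\rho+v\cdot\nabla\rho=\Delta\rho$ and $\mathcal R\Delta=-\frac1r\partial_r$, a direct computation gives
\[
\partial_t\Gamma+v\cdot\nabla\Gamma=[\mathcal R,v\cdot\nabla]\rho ,
\]
i.e.\ $\Gamma$ is transported up to a commutator term. The analytic core is then: (a) $\mathcal R$ is bounded on $L^p$ ($1<p<\infty$) and on the Lorentz space $L^{3,1}$ of axisymmetric functions, despite the $r^{-1}$ singularity, because $\partial_r(-\Delta)^{-1}\rho$ vanishes to first order on the axis; (b) a Calder\'on-type commutator estimate $\|[\mathcal R,v\cdot\nabla]\rho\|_{X}\lesssim\|\nabla v\|_{L^\infty}\|\rho\|_{X}$ for $X\in\{L^2,L^{3,1}\}$. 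It is exactly here that the assumptions $\rho_0\in L^m$ with $m>6$ and $r^2\rho_0\in L^2$ are used: $\mathcal R\rho$ is only one derivative smoother than $\rho$ and may lose integrability near the axis and at infinity, and these hypotheses guarantee that $\mathcal R\rho\in L^\infty_{\rm loc}(\RR_+;L^2\cap L^{3,1})$ with norm controlled by $\|\rho_0\|_{L^2\cap L^m}$ and $\|r^2\rho_0\|_{L^2}$; likewise $\Gamma(0)=\zeta_0-\mathcal R\rho_0\in L^2\cap L^{3,1}$, using that for an axisymmetric divergence-free field without swirl $\omega_{0,\theta}=O(r)$ near the axis.

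Transporting $\Gamma$ and inserting the commutator bound gives $\|\Gamma(t)\|_{L^2\cap L^{3,1}}\le\|\Gamma(0)\|_{L^2\cap L^{3,1}}+C\int_0^t\|\nabla v\|_{L^\infty}\,d\tau$, hence $\zeta=\Gamma+\mathcal R\rho\in L^\infty_{\rm loc}(\RR_+;L^2\cap L^{3,1})$ as long as $\nabla v\in L^1_tL^\infty$. Combining the pointwise estimate $|v^r/r|\lesssim|\cdot|^{-2}*|\zeta|$ with the convolution law $L^{3/2,\infty}*L^{3,1}\hookrightarrow L^\infty$ yields $\|v^r/r(t)\|_{L^\infty}\lesssim\|\zeta(t)\|_{L^{3,1}}$; feeding this into the $\omega_\theta$-equation and applying Gronwall (controlling $\partial_r\rho$ in $L^1_tL^\infty$ by the parabolic smoothing of the temperature equation, which only costs $\|\rho_0\|_{L^m}$, $m>6$, and $\int_0^t\|\nabla v\|_{L^\infty}$) gives $\omega\in L^\infty_{\rm loc}(\RR_+;L^\infty)$, hence $\omega\in L^1_{\rm loc}(\RR_+;L^\infty)$. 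Together with the logarithmic estimate $\|\nabla v\|_{L^\infty}\lesssim\|v\|_{L^2}+\|\omega\|_{L^\infty}\log(e+\|v\|_{H^s})$ and the $H^s$ energy estimate for $v$ (which uses $\rho\in L^1_{\rm loc}(\RR_+;H^s)$, again from parabolic smoothing), a continuity/Gronwall argument shows that $\|v\|_{H^s}$, $\|\rho\|_{H^{s-2}\cap L^m}$ and the auxiliary norms stay finite on every bounded interval; in particular $\int_0^{T^*}(\|\nabla v\|_{L^\infty}+\|\nabla\rho\|_{L^\infty})<\infty$, so the continuation criterion forces $T^*=+\infty$, and $\rho\in L^1_{\rm loc}(\RR_+;W^{1,\infty})$ and $r^2\rho\in\mathcal C(\RR_+;L^2)$ follow from the same smoothing and weighted estimates. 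Uniqueness is obtained by a routine stability estimate for the difference of two solutions in a low-norm space ($L^2$ for $v$, $L^2$ or $H^{-1}$ for $\rho$), using the $L^1_tW^{1,\infty}$ bounds already available.

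The main obstacle is the second step: $\mathcal R=\frac1r\partial_r(-\Delta)^{-1}$ is not a genuine Calder\'on--Zygmund operator because of the axis, so proving both its $L^p$/$L^{3,1}$ boundedness on axisymmetric functions and the attached commutator estimate --- in function spaces matched to the available information on $\rho$, which is precisely why one imposes $\rho_0\in L^m$ with $m>6$ and $r^2\rho_0\in L^2$ --- is the delicate point, together with arranging the coupled bounds of the third step into a genuinely non-circular continuity argument.
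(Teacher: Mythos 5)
Your setup is on the right track: you correctly identify the key diagonalization $\Gamma=\zeta+\tfrac{\partial_r}{r}\Delta^{-1}\rho$ (your $\Gamma=\zeta-\mathcal R\rho$ with $\mathcal R=\tfrac1r\partial_r(-\Delta)^{-1}$ is the same quantity), the fact that $\tfrac{\partial_r}{r}\Delta^{-1}$ acts like a zero-order operator on axisymmetric functions and is therefore bounded on $L^p$ and $L^{3,1}$, and the reduction to a BKM-type continuation criterion through $\zeta\in L^{3,1}$, $v^r/r\in L^\infty$, $\omega\in L^\infty$, and the logarithmic $\nabla v$ estimate. But the step you correctly flag as ``the delicate point'' is where your proposal breaks, and in a way that goes to the heart of the problem.

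You propose the commutator bound $\|[\mathcal R,v\cdot\nabla]\rho\|_{X}\lesssim\|\nabla v\|_{L^\infty}\|\rho\|_X$. This is not the estimate one can use here, for a decisive reason: at the stage where $\zeta$ is being estimated, the only a priori control on the velocity is $v\in L^\infty_{\rm loc}L^2$ from the energy estimate. One simply does not have $\nabla v\in L^1_tL^\infty$ --- that is exactly what the whole argument is trying to \emph{produce}. Feeding $\|\nabla v\|_{L^\infty}$ into the $\Gamma$--transport inequality and then recovering $\|\nabla v\|_{L^\infty}$ back from $\|\omega\|_{L^\infty}\log(\cdot)$, with $\|\omega\|_{L^\infty}$ growing exponentially in $\int_0^t\|v^r/r\|_{L^\infty}$ and $\|v^r/r\|_{L^\infty}\lesssim\|\zeta\|_{L^{3,1}}$, produces a double-exponential feedback with no visible Gronwall closure; your phrase ``a genuinely non-circular continuity argument'' glosses over precisely the point that makes the theorem non-trivial. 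The paper's commutator estimate (Theorem~3.1) is of an entirely different nature: using the special Biot--Savart structure for axisymmetric no-swirl flows (so that each $v^i$ is expressed as $x_h$ times $\nabla\Delta^{-1}\zeta$ plus a pseudodifferential correction $\mathcal L\zeta$), the commutator is bounded by
$\|\omega_\theta/r\|_{L^{3,1}}\big(\|\rho x_h\|_{B^0_{\infty,1}\cap L^2}+\|\rho\|_{B_{2,1}^{1/2}}\big)$,
i.e.\ by the unknown $\zeta$ itself (which feeds Gronwall) times quantities involving only $\rho$. This is why moments of $\rho$ appear: they come from commuting the multiplication by $x_h$ through the Riesz-type operators, not, as you suggest, from an integrability loss of $\mathcal R\rho$ near the axis (indeed $\mathcal R\rho\in L^{3,1}$ follows already from $\rho_0\in L^{3,1}$ with no weights).

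This leads to the second, equally essential, layer your proposal is missing. Once the commutator is controlled by $\|\rho x_h\|_{B^0_{\infty,1}}$, one must bound $\|\rho x_h\|_{L^1_tB^0_{\infty,1}}$ using only data and $\|\zeta\|_{L^\infty_tL^{3,1}}$. This requires a global-in-time $L^\infty$ estimate for $x_h\rho$ obtained from the convection--diffusion equation
$\partial_t(x_h\rho)+v\cdot\nabla(x_h\rho)-\Delta(x_h\rho)=v^h\rho-2\nabla_h\rho$,
but the source is only in $L^2$ and the velocity only in $L^\infty_tL^2$, so heat-kernel or maximal-regularity bounds fail; the paper uses a De~Giorgi--Nash--Moser iteration (Appendix~A) precisely because it is stable under the mere divergence-free structure of $v$. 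One also needs second moments $|x_h|^2\rho$ to handle the commutator $[\Delta_q,v\cdot\nabla](x_h\rho)$ (Proposition~3.3), and a time-dependent frequency cutoff to convert the $L^\infty$ bound into a logarithmic $B^0_{\infty,1}$ estimate. None of this is present or replaced by an alternative in your sketch, so it constitutes a genuine gap rather than a different route.
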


Let us give a few comments about our  result.
\begin{Rema}
By axisymmetric scalar function we mean again  a function that depends only on the variables $(r,z)$ but not on the angle $\theta$ in cylindrical coordinates.  One  can easily check  that for smooth local solutions, if 
 $(\rho_{0}, v_{0})$ is  axisymmetric  (and $v_{0}$ without swirl), this property is preserved
  by the evolution. 
\end{Rema}


\begin{Rema}
The assumption on the moment of $\rho$ is probably technical.
  The control of the moments of $\rho$ are needed in our proof in  some commutator estimates
  (see \eqref{comintro} for example).
\end{Rema}

Note that in view of the proof for the Euler equation,  the crucial part is to get an  a priori estimate for 
 $\zeta$ in $L^{3, 1}$.
The equation for $\zeta= \omega_{\theta}/r$
   becomes     
\begin{equation}
\label{mBouss}\partial_t \zeta+v\cdot\nabla \zeta= - {\partial_{r} \rho \over r }
\end{equation}
and  consequently,  the main  difficulty is to find  some strong a priori estimates on $\rho$ to control the  term in the right-hand side
 of  \eqref{mBouss}. 
The rough idea is that  on the axis $r=0$ the singularity ${1\over r}$
 scales  as a derivative and hence that  the forcing term $\partial_{r} \rho/r$ can be thought as a Laplacian
  of $\rho$ and thus one may try to use   smoothing effects to control it.  We observe that  
   if we  neglect for the moment the advection term $v\cdot\nabla\rho$ in the equation of the density then using the maximal smoothing effects of the heat semigroup  we can gain two derivatives by integrating in time which  is
    exactly what we need. From this point of view we see that our model is in some sense  critical for the  global well-posedness analysis.   The main difficulty  if one wants to use this argument is to  deal with  the advection term. 
     Indeed, 
      the only control on $v$ that we have at our disposal    is a $L^\infty_{loc}L^2$  estimate 
      (which comes from the basic energy estimate)
       and this is not sufficient  to obtain an estimate
           for $D^2\rho$ in $L^1_{loc}(L^p)$ by  considering the convection term as a source term
            and by using the maximal smoothing effect of the heat equation. Even more refined 
            maximal regularity  estimates on convection-diffusion equations (\cite{danchinpoche},
             \cite{hmidipoche} for example)  do not seem
             to provide 
             useful information  when the control of the velocity field  is so poor.
Consequently, our strategy for the proof   will be to use more carefully the structure of the coupling
 between the two equations of \eqref{bsintro} in order to find
  suitable a priori estimates for  $(\zeta,\, \rho)$. 
 Since  the  coupling 
 between the two equations does not make the original Boussinesq system well suited for a priori estimates, 
our  main idea is to use an approach  that  was successfully used  for  the study of   two-dimensional systems
     with a critical dissipation, see  \cite{HKR1,HKR2}
      and the Navier-Stokes-Boussinesq system with axisymmetric data  \cite{HR1}. It consists in diagonalizing
      the linear part of the system  satisfied  by $\zeta$
       and  $\rho$.
      We introduce a   new unknown $\Gamma$ which  here formally reads 
     $$  \Gamma= \zeta + {\partial_{r} \over r } \Delta^{-1}
      \rho$$
      and we  study the system satisfied by $(\Gamma, \rho)$ which is given by:
      $$ \partial_{t} \Gamma + v \cdot \nabla \Gamma= - \big[ {\partial_{r} \over r } \Delta^{-1}, v \cdot \nabla \big]
       \rho, \quad 
       \partial_{t} \rho + v \cdot \nabla \rho = \Delta \rho$$
       where $ \big[ {\partial_{r} \over r } \Delta^{-1}, v \cdot \nabla \big] 
       $ is the commutator defined by 
       $$   \big[ {\partial_{r} \over r } \Delta^{-1}, v \cdot \nabla \big] \rho=  {\partial_{r} \over r } \Delta^{-1}
        \big( v \cdot \nabla \rho \big) - v \cdot \nabla \big(  {\partial_{r} \over r } \Delta^{-1}
\rho \big).$$
Note that if we forget the commutator for a while,  we immediately get an a priori $L^p$ estimate
 for $\Gamma$  for every $p$  from which we can hope to get an $L^p$ estimate for
  $\zeta$, if the operator ${\partial_{r} \over r } \Delta^{-1}$ behaves well.   
  
 To make this argument rigorous, we  need first to study  the action of the  operator    ${\partial_{r} \over r } \Delta^{-1}$ over axisymmetric functions. This is done in Proposition \eqref{prop1} where we prove that  this operator takes the form  $$  {\partial_{r} \over r } \Delta^{-1}= \sum_{i, j} a_{ij}(x) \mathcal{R}_{ij}$$
   where $\mathcal{R}_{ij}=\partial_i\partial_j\Delta^{-1}$ are Riesz operators and the functions $a_{ij}$ 
    are bounded.
   This yields that  $ {\partial_{r} \over r } \Delta^{-1}$ acts continuously on $L^{3, 1}$
    and hence that 
    $$
   \big\|{\partial_{r} \over r } \Delta^{-1}\rho(t)\big\|_{L^{3,1}}\lesssim\|\rho(t)\|_{L^{3,1}}\lesssim\|\rho_0\|_{L^{3,1}}.
    $$ 
    It follows that the control  of  $\Gamma$ is equivalent to the control of   $\zeta$ in $L^{3, 1}$.
  Now it remains  to estimate in a suitable way  the commutator  term $ \big[ {\partial_{r} \over r } \Delta^{-1}, v \cdot \nabla \big]\rho 
       $ which is the main technical part.  It seems that there is no hope to bound the commutator without using unknown quantities because there is no other known a priori estimates of the velocity except that given by  energy estimate which is not strong enough. 
   We shall prove  (Theorem \ref{prop2}) that
   \begin{equation}
   \label{comintro}
\Big\|\big[({\partial_r}/{r})\Delta^{-1},v\cdot\nabla\big]\rho\Big\|_{L^{3,1}}\lesssim\|\omega_{\theta}/r\|_{L^{3,1}}\big(\|\rho x_h\|_{B_{\infty,1}^0\cap L^2}+\|\rho\|_{B_{2,1}^{\frac12}}\big).
\end{equation}
This estimate is  the heart of our argument, its proof  combines the use 
  of paradifferential calculus and   some harmonic analysis results  and  also requires a careful use 
   of the property that  velocity $v$ is axisymmetric without swirl in the Biot-Savart law.
   
  The main reason for  which  we need  some moments of $\rho$ in the right-hand side of \eqref{comintro}
   is that  we want an estimate of the commutator invoving $\omega_{\theta}/r$   and   not   $\omega$.
    
  In the right hand side of \eqref{comintro},      $\|\rho\|_{B_{2,1}^{\frac12}}$
   and $\|\rho x_{h} \|_{L^2}$ can be controlled in terms of the initial data only by using
   the smoothing effect of the convection-diffusion equation for $\rho$ and standard energy estimates. Consequently,
    from this commutator estimate, we obtain that
   \begin{equation}
   \label{Gammaintro} \|\zeta(t) \|_{ L^{3, 1} } \leq C(t) e^{ C \| \rho x_{h}\|_{L^1_{t} B_{\infty, 1}^0} }
   \end{equation}
    and
    the next difficult step  is to control   $\|  \rho x_{h} \|_{L^1_tB_{\infty,1}^0}$.
      This is done  in two steps. 
       The first step  is to get a global $L^\infty$ estimate of $\rho x_{h}$  in terms
       of the initial data only and then in a second step, we shall prove  a logarithmic estimate
      for    the $B_{\infty, 1}^0$ norm of $x_{h}\rho$  in terms of  the $L^{3, 1}$ norm of $\zeta$.
      
     For the first step,   let us observe that $f= \rho  x_{h}$ solves the equation
      \begin{equation}
      \label{momentintro} \partial_{t}f + v \cdot \nabla f - \Delta f= v^h\rho - 2 \nabla_{h} \rho.
      \end{equation}
       Note that for the moment, we only have at our disposal the standard energy estimate
        for $v$ (thus we control $\| v \|_{L^\infty_{t} L^2}$ only), consequently to obtain
         an  $L^\infty $ estimate for $f$ we need to use  an $L^2 \rightarrow L^\infty$
          estimate for the convection-diffusion equation since the source term in the right-hand side
           can be estimated only in $L^2$. Note that  the convection term
           cannot  be neglected (again because of   the  weak control
            on $v$ that we have  at this stage) and hence this estimate cannot be obtained
             from heat kernel estimates.  We  shall obtain this estimate  by using the Nash-Moser-De Giorgi
            iterations
            \cite{Giorgi}, \cite{Nash}, \cite{Moser}. Indeed, the main interest of this approach is that since it is based on 
             energy  type estimates, the  convection term does not contribute. A general  result is recalled in the Appendix \ref{appendix}.
             For technical reasons, some higher order moment estimates  which are easier to obtain are also needed, they are stated in Proposition \ref{prop07}.
              
           Once the estimate of $\| \rho x_{h}\|_{L^\infty}$ is known in terms of the initial data, 
           one can establish   logarithmic Besov space estimates
                        for the convection-diffusion equation \eqref{momentintro} by using a special
                         time dependent  frequency  cut-off of $x_h\rho$ where we combine  the $L^\infty$ estimate with some smoothing effects for $x_h\rho$. This yields (see \eqref{eqs444})
             \begin{equation}\label{titt1}
            \|\rho x_{h}\|_{L^1_{t} B_{\infty, 1}^0 } \leq C_{0}(t) \Big( 1+
              \int_{0}^t   h(\tau) \log\big( 2 + \|\zeta \|_{L^\infty_{\tau}L^{3, 1}}\big)\, d\tau\Big)
              \end{equation}
              where $C_0(t)$ is a given continuous function and $h$ is some $L^1_{loc}(\RR_+)$ function. We point out that the use of the moment of order two $|x_h|^2\rho$  is due to the  treatment of the commutator $[\Delta_q, v\cdot\nabla](x_h\rho)$  which appears when we deal with the smoothing effects.  
              
           The combination of  the estimates \eqref{titt1} and \eqref{Gammaintro} with   Gronwall inequality allows to \mbox{control
             $\| \zeta(t) \|_{L^{3, 1}}$} globally in time.
             
             The final step is  to  deduce,  as for the incompressible Euler equation, from
              the control of    $\| \zeta \|_{L^\infty_{t}L^{3, 1}}$ 
              an estimate  of $\| \omega \|_{L^\infty_{t }L^\infty}$  and of $\|\nabla v \|_{L^\infty_{t}L^\infty}.$
              This is the aim of Proposition  \ref{vorticity} and \ref{Soblev-est}. Estimates
               in Sobolev spaces then follow in a rather classical way.
               
             Once a priori estimates for sufficiently smooth functions are known,
              the result of Theorem \ref{thm1} follows from an approximation argument.

     The paper is organized as follows. 
   In section \ref{prelim1} we  fix the notations,  give the definitions of the functional spaces, 
    in particular Besov and Lorentz spaces, 
    that we shall use and  state some of their useful  properties. 
    We also study the operator ${\partial_{r} \over r } \Delta^{-1}$ in Proposition \ref{prop1}.
    Next, in section \ref{sectioncom}, we
     study the commutator
      $ \big[ {\partial_{r} \over r } \Delta^{-1}, v \cdot \nabla \big] 
       $.
 In section \ref{sectionapriori}, we turn to the proof of  a priori estimates for  sufficiently smooth
  solutions of \eqref{bsintro}. We first  prove in Proposition \ref{Energy}
   some basic energy estimates, next, we study the moments of  $\rho$ in Proposition \ref{prop07} 
    and then  we control $\|\zeta\|_{L^{3, 1}}$ in Proposition \ref{Strong}.  Lipschitz and Sobolev
     estimates 
     are  finally obtained in Proposition \ref{vorticity} and Proposition \ref{Soblev-est}.
      In section \ref{sectionproof}, we give the proof
         of Theorem \ref{thm1}: we obtain the existence part by using the a priori estimates
          and an approximation argument and then we prove the uniqueness part.
      Finally, the appendix \ref{appendix} is devoted to the proof of a priori estimates for
       convection-diffusion equations by the  Nash-De Giorgi iterations which are needed
        in the  estimate of the moments of $\rho$.  In the  appendix \ref{appendixB} we give the proof of Lemma \ref{commu} which is  a technical commutator lemma used in several places.

\section{Preliminaries}
\label{prelim1}
\subsection{Dyadic decomposition and functional spaces}
  Throughout this paper, $C$ stands for some real positive constant which may be different
  in each occurrence and $C_0$ denotes a positive number  depending on the initial data only.   We shall
  sometimes alternatively use the notation $X\lesssim Y$ for  the inequality  $X\leq CY$.
  
  When $B$ is a Banach space, we shall use the  shorthand  $L^p_{T}(B)$ for
   $L^p(0, T, B)$.
  
%
Now to introduce Besov spaces which are a generalization of Sobolev spaces we need to recall 
  the dyadic decomposition of the unity in  the whole space (see  \cite{che1}).
  \begin{prop}
There exist two positive radial  functions  $\chi\in \mathcal{D}(\mathbb R^3)$ and
$\varphi\in\mathcal{D}(\mathbb R^3\backslash{\{0\}})$ such that
\begin{enumerate}
\item
$\displaystyle{\chi(\xi)
+\sum_{q\in\NN}\varphi(2^{-q}\xi)=1},\quad \frac13\le \chi^2(\xi)+\sum_{q\in\NN}\varphi^2(2^{-q}\xi)\le 1$\;
$\forall\xi\in\mathbb R^3,$
\item
$ \textnormal{supp }\varphi(2^{-p}\cdot)\cap
\textnormal{supp }\varphi(2^{-q}\cdot)=\varnothing,$ if  $|p-q|\geq 2$,\\

\item
$\displaystyle{q\geq1\Rightarrow \textnormal{supp}\chi\cap
\textnormal{supp }\varphi(2^{-q})=\varnothing}$.
\end{enumerate}
\end{prop}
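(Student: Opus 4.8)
The final statement is nothing but the classical Littlewood--Paley dyadic partition of unity, so rather than invoke an abstract construction I would exhibit $\chi$ and $\varphi$ explicitly. First I would fix a radial function $\chi\in\mathcal{D}(\RR^3)$ with values in $[0,1]$, equal to $1$ on the ball $\{|\xi|\le\tfrac34\}$ and supported in $\{|\xi|\le\tfrac43\}$; such a function is produced in the usual way by mollifying the indicator of a ball. Then I would set
\[
\varphi(\xi):=\chi(\xi/2)-\chi(\xi),\qquad \mathcal{C}:=\Big\{\xi\in\RR^3:\ \tfrac34\le|\xi|\le\tfrac83\Big\}.
\]
Since $\chi(\cdot/2)$ and $\chi$ both equal $1$ on $\{|\xi|\le\tfrac34\}$ and $0$ outside $\{|\xi|\le\tfrac43\}$ and $\{|\xi|\le\tfrac83\}$ respectively, and since $0\le\chi\le1$, the function $\varphi$ is smooth, radial, non-negative, vanishes near the origin, and is supported in the annulus $\mathcal{C}$; in particular $\varphi\in\mathcal{D}(\RR^3\setminus\{0\})$.

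Next I would check the three stated properties. For the partition identity, I note that for each fixed $\xi$ only finitely many of the $\varphi(2^{-q}\xi)$ are non-zero, because $\varphi(2^{-q}\cdot)$ is supported in the annulus $2^q\mathcal{C}$ and a given $\xi$ belongs to only boundedly many of these; hence the series is locally finite and telescopes,
\[
\chi(\xi)+\sum_{q=0}^{N}\varphi(2^{-q}\xi)=\chi(2^{-N-1}\xi)\longrightarrow \chi(0)=1\quad\text{as }N\to\infty,
\]
which gives $\chi(\xi)+\sum_{q\in\NN}\varphi(2^{-q}\xi)=1$ for all $\xi$. The disjointness statements (2) and (3) follow from elementary arithmetic with the sets $2^q\mathcal{C}=\{2^q\tfrac34\le|\xi|\le 2^q\tfrac83\}$: two such annuli $2^p\mathcal{C}$, $2^q\mathcal{C}$ can meet only if $2^{|p-q|}\le\tfrac{8/3}{3/4}=\tfrac{32}{9}<4$, hence only if $|p-q|\le1$; and for $q\ge1$ the annulus $2^q\mathcal{C}$ begins at $|\xi|=2^q\tfrac34\ge\tfrac32>\tfrac43$, so it is disjoint from $\mathrm{supp}\,\chi\subset\{|\xi|\le\tfrac43\}$.

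For the quadratic bound $\tfrac13\le\chi^2(\xi)+\sum_{q\in\NN}\varphi^2(2^{-q}\xi)\le1$, I would use the disjointness just established: at any given point $\xi$ at most two of the functions among $\chi$ and the $\varphi(2^{-q}\cdot)$, $q\in\NN$, are non-zero. Denoting their values by $a,b\in[0,1]$ with $a+b=1$, one has $a^2+b^2=1-2ab$ with $0\le ab\le\tfrac14$, so $\tfrac12\le a^2+b^2\le1$, and $\tfrac12\ge\tfrac13$ gives the claimed lower bound. There is no genuine obstacle in this proposition; the only points demanding a little care are the numerology of the radii $\tfrac34,\tfrac43,2$ that secures the separation constants in (2)--(3), and the observation that all the sums involved are locally finite, which is what legitimizes the telescoping and the two-term counting used above.
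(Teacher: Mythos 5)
Your construction is correct and is exactly the standard one (Chemin, \emph{Perfect Incompressible Fluids}), which is what the paper implicitly refers to by citing \cite{che1} without proof. All the checks go through: the telescoping identity, the annulus arithmetic for (2)–(3), the counting that at most two of $\chi,\varphi(2^{-q}\cdot)$ are nonzero at any point, and hence the $a^2+b^2\ge\tfrac12\ge\tfrac13$ bound (your lower bound $\tfrac12$ is in fact stronger than what the proposition asserts, since with your numerology no triple overlap occurs).
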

For every $u\in{\mathcal S}'(\mathbb R^3)$ we define the nonhomogeneous Littlewood-Paley operators by,
$$
\Delta_{-1}u=\chi(\hbox{D})u;\, \forall
q\in\mathbb N,\;\Delta_qu=\varphi(2^{-q}\hbox{D})u\; \quad\hbox{and}\quad
S_qu=\sum_{-1\leq j\leq q-1}\Delta_{j}u.
$$
One can easily prove that for every tempered distribution $u,$
\begin{equation}\label{dr2}
u=\sum_{q\geq -1}\Delta_q\,u.
\end{equation}

In the sequel we will  frequently use Bernstein inequalities (see for \mbox{example \cite{che1}}).
\begin{lem}\label{lb}\;
 There exists a constant $C$ such that for $k\in\NN$, \mbox{$1\leq a\leq b$}   and $u\in L^a$, we have
\begin{eqnarray*}
\sup_{|\alpha|=k}\|\partial ^{\alpha}S_{q}u\|_{L^b}
&\leq&
C^k\,2^{q(k+3(\frac{1}{a}-\frac{1}{b}))}\|S_{q}u\|_{L^a},
\end{eqnarray*}
and for $q\in\NN$
\begin{eqnarray*}
\ C^{-k}2^
{qk}\|{\Delta}_{q}u\|_{L^a}
&\leq&
\sup_{|\alpha|=k}\|\partial ^{\alpha}{\Delta}_{q}u\|_{L^a}
\leq
C^k2^{qk}\|{\Delta}_{q} u\|_{L^a}.
\end{eqnarray*}
\end{lem}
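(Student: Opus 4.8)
The plan is to reduce both estimates to elementary facts about Fourier multipliers that are localized at frequency $2^q$, and then to iterate a single–derivative version $k$ times; this iteration is exactly what keeps the constant of the form $C^k$ (rather than, say, $k!$).

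\textbf{Step 1: a scaling lemma.} Let $m\in\mathcal{S}(\mathbb{R}^3)$ and set $\phi=\mathcal{F}^{-1}m$. A change of variables in the inverse Fourier transform gives $m(2^{-q}\textnormal{D})f=\big(2^{3q}\phi(2^q\cdot)\big)*f$, so by Young's inequality, for $1\le a\le b$ and $1/c=1-(1/a-1/b)$,
\[
\|m(2^{-q}\textnormal{D})f\|_{L^b}\le\|2^{3q}\phi(2^q\cdot)\|_{L^c}\,\|f\|_{L^a}=2^{3q(1/a-1/b)}\,\|\phi\|_{L^c}\,\|f\|_{L^a}.
\]
Since $\|\phi\|_{L^c}\le\max(\|\phi\|_{L^1},\|\phi\|_{L^\infty})$ for every $c\in[1,\infty]$ by interpolation, and $\phi$ is Schwartz, the operator $m(2^{-q}\textnormal{D})$ maps $L^a\to L^b$ with norm $\le C(m)\,2^{3q(1/a-1/b)}$ where $C(m)=\max(\|\mathcal{F}^{-1}m\|_{L^1},\|\mathcal{F}^{-1}m\|_{L^\infty})$; in particular its $L^a\to L^a$ norm is $\le\|\mathcal{F}^{-1}m\|_{L^1}$.

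\textbf{Step 2: reproducing formulas.} By items (1)--(3) of the dyadic partition, the Fourier multiplier of $S_q$ is supported in a ball $B(0,c_0 2^q)$ with $c_0$ independent of $q$, and that of $\Delta_q$ in $\operatorname{supp}\varphi(2^{-q}\cdot)$. Fix $\widetilde\chi\in\mathcal{D}(\mathbb{R}^3)$ equal to $1$ on $B(0,c_0)$ and $\widetilde\varphi\in\mathcal{D}(\mathbb{R}^3\setminus\{0\})$ equal to $1$ on $\operatorname{supp}\varphi$. Then $S_q u=\widetilde\chi(2^{-q}\textnormal{D})S_q u$ and $\Delta_q u=\widetilde\varphi(2^{-q}\textnormal{D})\Delta_q u$, and — since differentiation does not enlarge the spectrum — the same identities hold with $S_q u$, $\Delta_q u$ replaced by $\partial^\beta S_q u$, $\partial^\beta\Delta_q u$ for any multi-index $\beta$. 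Moreover, with $g_j(\xi)=i\xi_j\widetilde\chi(\xi)$, $\psi_j(\xi)=i\xi_j\widetilde\varphi(\xi)$ and $\theta_j(\xi)=-i\xi_j\widetilde\varphi(\xi)/|\xi|^2$ (all in $\mathcal{D}(\mathbb{R}^3)$, the last being smooth and compactly supported precisely because $\widetilde\varphi$ vanishes near the origin), one checks on the Fourier side that
\[
\partial_j S_q u=2^{q}g_j(2^{-q}\textnormal{D})S_q u,\qquad \partial_j\Delta_q u=2^{q}\psi_j(2^{-q}\textnormal{D})\Delta_q u,\qquad \Delta_q u=2^{-q}\sum_{j}\theta_j(2^{-q}\textnormal{D})\,\partial_j\Delta_q u.
\]
(The last identity has no analogue for $S_q$ because $0$ lies in the support of its multiplier, consistent with the absence of a lower bound for $S_q$ in the statement.)

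\textbf{Step 3: the case $k=1$ and iteration.} Combining Steps 1 and 2 gives the three estimates $\|\partial_j S_q u\|_{L^a}\le C\,2^q\|S_q u\|_{L^a}$, $\|\partial_j\Delta_q u\|_{L^a}\le C\,2^q\|\Delta_q u\|_{L^a}$ and $2^q\|\Delta_q u\|_{L^a}\le C\max_j\|\partial_j\Delta_q u\|_{L^a}$, with $C$ depending only on $\widetilde\chi,\widetilde\varphi$. For general $k$, write $\partial^\alpha=\partial_{j_1}\cdots\partial_{j_k}$; since each $\partial^{\alpha'}S_q u$ (resp. $\partial^{\alpha'}\Delta_q u$) with $|\alpha'|<k$ still has spectrum in the ball (resp. the annulus), one peels off one derivative at a time, each step costing a factor $C\,2^q$ by the $k=1$ bounds applied to $\partial^{\alpha'}(\cdot)$. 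This gives $\sup_{|\alpha|=k}\|\partial^\alpha S_q u\|_{L^a}\le C^k 2^{qk}\|S_q u\|_{L^a}$ and $C^{-k}2^{qk}\|\Delta_q u\|_{L^a}\le\sup_{|\alpha|=k}\|\partial^\alpha\Delta_q u\|_{L^a}\le C^k 2^{qk}\|\Delta_q u\|_{L^a}$, i.e. the second display of the lemma. Finally $\partial^\alpha S_q u$ has spectrum in $B(0,c_0 2^q)$, so the $L^a\to L^b$ bound of Step 1 with $m=\widetilde\chi$ gives $\|\partial^\alpha S_q u\|_{L^b}\le C\,2^{3q(1/a-1/b)}\|\partial^\alpha S_q u\|_{L^a}$; inserting the previous estimate and renaming the constant yields $\sup_{|\alpha|=k}\|\partial^\alpha S_q u\|_{L^b}\le C^k 2^{q(k+3(1/a-1/b))}\|S_q u\|_{L^a}$, which is the first display. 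The main (mild) obstacle is precisely keeping the constant exponential in $k$: bounding $\|\mathcal{F}^{-1}\big((i\xi)^\alpha\widetilde\chi(\xi)\big)\|_{L^1}$ directly for $|\alpha|=k$ would involve $k$-th order derivatives of a fixed Schwartz function, which is awkward to control by $C^k$; the reduction to single-derivative estimates and the iteration in Step 3 avoid this. Everything else is routine harmonic analysis.
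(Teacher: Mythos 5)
Your proof is correct. The paper does not actually prove this lemma --- it is the classical Bernstein inequality, simply quoted from \cite{che1} --- and your argument is essentially the standard one: reproduce the block via a multiplier $\widetilde\chi(2^{-q}\textnormal{D})$ (resp. $\widetilde\varphi(2^{-q}\textnormal{D})$, with $|\xi|^{-2}\xi_j\widetilde\varphi$ legitimate because the support avoids the origin) and apply Young's inequality to the rescaled kernel, which gives exactly the factor $2^{3q(\frac1a-\frac1b)}$. Your iteration of the first-order estimates is a clean way to keep the constant of size $C^{k}$; the textbook route instead bounds $\|\mathcal{F}^{-1}\big((i\xi)^{\alpha}\widetilde\chi\big)\|_{L^{1}}$ uniformly in $|\alpha|=k$ by weighted estimates, so the two are interchangeable here.
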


The basic tool of  the paradifferential calculus  is  Bony's
decomposition  \cite{b}.   It distinguishes in   a product
$uv$  three parts as follows:
$$
uv=T_u v+T_v u+\mathcal{R}(u,v),
$$
where
\begin{eqnarray*}
T_u v=\sum_{q}S_{q-1}u\Delta_q v, \quad\hbox{and}\quad \mathcal{R}(u,v)=
\sum_{q}\Delta_qu \widetilde \Delta_{q}v,
\end{eqnarray*}
$$
\textnormal{with}\quad {\widetilde \Delta}_{q}=\sum_{i=-1}^{1}\Delta_{q+i}.
$$
The term $T_{u}v$ is called the paraproduct of $v$ by $u$ and  $\mathcal{R}(u,v)$ the remainder term.
 The main interest of the paraproduct term is   that each term $S_{q-1} u \Delta_{q} v$
  has the support of its Fourier transform still  localized in  an annulus of size $2^q$ and thus $T_uv$ is a sum of almost orthogonal  functions.

 Let $(p,r)\in[1,+\infty]^2$ and $s\in\mathbb R,$ then the nonhomogeneous  Besov
\mbox{space $B_{p,r}^s$} is
the set of tempered distributions $u$ such that
$$
\|u\|_{B_{p,r}^s}:=\Big( 2^{qs}
\|\Delta_q u\|_{L^{p}}\Big)_{\ell^{r}}<+\infty.
$$
We remark that  the  Sobolev space $H^s$ coincides with  the  Besov space  $B_{2,2}^s$.
Also, by using the Bernstein inequalities  we get easily the embeddings
$$
B^s_{p_1,r_1}\hookrightarrow
B^{s+3({1\over p_2}-{1\over p_1})}_{p_2,r_2}, \qquad p_1\leq p_2\quad and \quad  r_1\leq r_2.
$$

Finally, let us notice that we can also characterize $L^p$ spaces in terms of  the 
 dyadic decomposition, see \cite{Stein}. For $p\in ]1, + \infty[$, 
  there exists $C>0$ such that: 
 $f$  belongs to $L^p$ if and only
  if $(\Delta_{q} f)_{q \geq -1} \in L^p l^2$  and
  \begin{equation}
  \label{LpB} C^{-1}  \Big\| \Big(  \sum_{q \geq -1} |\Delta_{q} f |^2 \Big)^{1 \over 2} \Big\|_{L^p}
   \leq \|f \|_{L^p}  \leq C 
 \Big\| \Big(  \sum_{q \geq -1} |\Delta_{q} f |^2 \Big)^{1 \over 2} \Big\|_{L^p}.
 \end{equation}
  
\subsection{Lorentz spaces and interpolation}

For $p\in]1,\infty[, q\in [1,+\infty],$ the Lorentz space $L^{p,q}$ can be defined by 
real interpolation from Lebesgue spaces:
$$
(L^{p_0},L^{p_1})_{(\theta,q)}=L^{p,q},
$$
where
$
1\leq p_0<p<p_1\leq\infty,
$
$\theta$ satisfies ${1\over p}={1-\theta\over p_0}
+{\theta\over p_1}$ and $1\leq q\leq\infty$.

From this definition, we get: 
\begin{equation}
\label{propLp1}
L^{p,q}\hookrightarrow L^{p,q'}, \quad L^{p,p}=L^p
\end{equation}
for every  $ 1< p<\infty, 1\leq q\leq q'\leq \infty$.

Lorentz spaces will arise in a natural way in our problem because of the following classical
  convolution results, for the proof see for instance \cite{Lemar,Neil}.
  \begin{Theo}
  \label{convolution}
  For every  $\alpha$,  $0<\alpha<d$,  $p_{i}\in  ]1, +\infty[$, $q_{i} \in  [1, + \infty]$,  such that
   $1+ {1 \over p_{1}} =  {1 \over p_{2}} + {1 \over p_{3} }$ and
    $ {1 \over q_{1}} = {1 \over q_{2}} + {1 \over q_{3}}$, 
      there exists $C>0$ such that
      \begin{equation}
      \label{convolution1}
    \| f * g \|_{L^{p_{1}, q_{1}}} \leq C \| f\|_{L^{p_{2}, q_{2}}} \, \|g \|_{L^{p_{3}, q_{3}}}.
    \end{equation}
    Moveover, in the case that  $p_{1}= \infty$, we have
   \begin{equation}
   \label{convolution2}
   \|f * g\|_{L^\infty(\mathbb{R}^d)}\le C\|f\|_{L^{\frac d\alpha,\infty}(\mathbb{R}^d)}\|g\|_{L^{\frac{d}{d-\alpha},1}
(\mathbb{R}^d)}.
\end{equation}
 \end{Theo}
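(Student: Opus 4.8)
The plan is to deduce \eqref{convolution1} from Young's convolution inequality on Lebesgue spaces by bilinear real interpolation, and to treat the endpoint \eqref{convolution2} directly by rearrangement. I would do \eqref{convolution2} first, as it is quick. Since $p_2=d/\alpha$ and $p_3=d/(d-\alpha)$ satisfy $\frac1{p_2}+\frac1{p_3}=1$, for every $x$ the Hardy--Littlewood rearrangement inequality $\int_{\mathbb{R}^d}|F(y)G(y)|\,dy\le\int_0^\infty F^*(s)G^*(s)\,ds$, applied to $F(y)=f(x-y)$ and $G(y)=g(y)$ whose decreasing rearrangements are $f^*$ and $g^*$, gives
$$
|f*g(x)|\le\int_0^\infty f^*(s)g^*(s)\,ds=\int_0^\infty\big(s^{1/p_2}f^*(s)\big)\big(s^{1/p_3}g^*(s)\big)\,\frac{ds}{s}\le\|f\|_{L^{p_2,\infty}}\int_0^\infty s^{1/p_3}g^*(s)\,\frac{ds}{s},
$$
and the last integral is comparable to $\|g\|_{L^{p_3,1}}$; taking the supremum in $x$ yields \eqref{convolution2}.

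For \eqref{convolution1} I would start from Young's inequality, namely that convolution maps $L^a\times L^b\to L^c$ boundedly whenever $1+\frac1c=\frac1a+\frac1b$ with $a,b,c\ge1$. Given $p_1,p_2,p_3\in(1,\infty)$ and $q_1,q_2,q_3$ as in the statement, I would fix $\theta\in(0,1)$ and pairs $p_2^0\neq p_2^1$ and $p_3^0\neq p_3^1$ in $(1,\infty)$ with $\frac1{p_2}=\frac{1-\theta}{p_2^0}+\frac{\theta}{p_2^1}$ and $\frac1{p_3}=\frac{1-\theta}{p_3^0}+\frac{\theta}{p_3^1}$, chosen so that $\frac1{p_1^j}:=\frac1{p_2^j}+\frac1{p_3^j}-1\in(0,1)$ for $j=0,1$; this is possible because the target exponents lie strictly inside their admissible ranges. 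Young's inequality then gives $*\colon L^{p_2^0}\times L^{p_3^0}\to L^{p_1^0}$ and $*\colon L^{p_2^1}\times L^{p_3^1}\to L^{p_1^1}$. Combining this with the interpolation description of Lorentz spaces recalled above, which yields $(L^{p_2^0},L^{p_2^1})_{\theta,q_2}=L^{p_2,q_2}$, $(L^{p_3^0},L^{p_3^1})_{\theta,q_3}=L^{p_3,q_3}$ and $(L^{p_1^0},L^{p_1^1})_{\theta,q_1}=L^{p_1,q_1}$, I would invoke the classical bilinear real interpolation theorem in its diagonal form: since $\frac1{q_1}=\frac1{q_2}+\frac1{q_3}\le1$ it gives $\|f*g\|_{L^{p_1,q_1}}\lesssim\|f\|_{L^{p_2,q_2}}\|g\|_{L^{p_3,q_3}}$, and one checks $\frac1{p_1}=\frac{1-\theta}{p_1^0}+\frac{\theta}{p_1^1}=\frac1{p_2}+\frac1{p_3}-1$, which is precisely the required relation between the exponents.

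The one point that is not purely soft is the sharpness of the third index: iterating only the \emph{linear} real interpolation theorem (freezing one factor in a fixed Lebesgue space at a time) would produce merely $L^{p_1,q_2}$ or $L^{p_1,q_3}$, which is weaker than $L^{p_1,q_1}$ in general, so the genuinely bilinear theorem is needed here. An equivalent self-contained alternative, which is the route taken in \cite{Neil}, is to establish O'Neil's pointwise bound $(f*g)^{**}(t)\le t\,f^{**}(t)\,g^{**}(t)+\int_t^\infty f^*(s)g^*(s)\,ds$ and then apply Hardy's inequalities in the variable $t$ against the weight $t^{1/p_1}$. Once \eqref{convolution1} and \eqref{convolution2} are in hand the statement is proved.
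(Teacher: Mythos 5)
The paper does not prove Theorem \ref{convolution}; it records it as a classical fact and refers the reader to \cite{Lemar} and \cite{Neil}, so there is no in-house argument to compare against. Your sketch is correct on both counts. For \eqref{convolution2}, fixing $x$, replacing the convolution integrand by its decreasing rearrangement, and then splitting the trivial weight $s^{1/p_2+1/p_3-1}=1$ between the two factors so that one picks up $\|f\|_{L^{p_2,\infty}}$ and the other $\|g\|_{L^{p_3,1}}$ is exactly the standard proof. For \eqref{convolution1}, passing from strong-type Young endpoints to Lorentz spaces by bilinear real interpolation does give the sharp third index $1/q_1=1/q_2+1/q_3$, but the phrase ``the classical bilinear real interpolation theorem'' invites misreading: the statement you need is the Lions--Peetre bilinear theorem (e.g.\ Bergh--L\"ofstr\"om, \emph{Interpolation Spaces}, Exercise 3.13.5(b), or O'Neil's paper), and, as you correctly flag, this is genuinely finer than what comes out of iterating the linear Marcinkiewicz theorem one factor at a time, which only yields $L^{p_1,\min(q_2,q_3)}$. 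Since this is the only non-soft step, it is worth naming the source explicitly. Your fallback route through O'Neil's pointwise bound $(f*g)^{**}(t)\le t\,f^{**}(t)g^{**}(t)+\int_t^\infty f^*g^*$ followed by Hardy's inequality is indeed what \cite{Neil} does and is the more self-contained option; either variant closes the proof.
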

 In particular,  by using this result   and  the fact that
  $1/ |x|^2$ belongs to $L^{ {3 \over 2}, \infty}(\mathbb{R}^3)$, we
  have that
  \begin{equation}
  \label{ndelta-1}
   \| \nabla \Delta^{-1} f \|_{L^\infty(\mathbb{R}^3)} \lesssim \| f \|_{L^{3, 1}(\mathbb{R}^3)}
   \end{equation}
  and thanks to the pointwise estimate \eqref{pointwise} that 
   \begin{equation}
 \label{convol}
\Big\|{v_{r}\over r }\Big\|_{L^\infty} \lesssim \| \zeta \|_{L^{3, 1} } .
\end{equation}
  To establish some functional inequalities involving Lorentz spaces  the following
    classical  interpolation result (see  \cite{Lemar} for example) will be very useful. 
\begin{Theo}\label{interpol}
\item Let $1\le p_1<p_2\le\infty, 1\le r_1<r_2\le\infty, q\in[1,\infty]$ and  $T$ be a linear bounded operator from $L^{p_i}$ to $L^{r_i}$. Let $\theta\in]0,1[$ and $p,r$ such that  $\frac1p=\frac{\theta}{p_1}+\frac{1-\theta}{p_2}$ and $ \frac1r=\frac{\theta}{r_1}+\frac{1-\theta}{r_2}$. Then $T$ is also bounded from $L^{p,q}$ to $L^{r,q}$ with
$$
\|T\|_{\mathcal{L}(L^{p,q};L^{r,q})}\le C \|T\|_{\mathcal{L}(L^{p_1};L^{r_1})}^\theta\|T\|_{\mathcal{L}(L^{p_2};L^{r_2})}^{1-\theta}.
$$
\end{Theo}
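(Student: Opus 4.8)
The plan is to deduce the statement from the general interpolation theorem for the real ($K$-, or Lions--Peetre) method, using precisely the identification of Lorentz spaces as real interpolation spaces of Lebesgue spaces recalled above. The first thing I would do is record the bookkeeping of the exponents: since $p_1<p_2$ and $\theta\in\,]0,1[$ force $p_1<p<p_2$, and likewise $r_1<r<r_2$, the relations $\frac1p=\frac{\theta}{p_1}+\frac{1-\theta}{p_2}$ and $\frac1r=\frac{\theta}{r_1}+\frac{1-\theta}{r_2}$ say exactly that
$$L^{p,q}=(L^{p_1},L^{p_2})_{(1-\theta,\,q)},\qquad L^{r,q}=(L^{r_1},L^{r_2})_{(1-\theta,\,q)},$$
in the normalization used in the definition above, so that the relevant interpolation parameter is $1-\theta$ rather than $\theta$.

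Next I would observe that $T$, being one linear operator that is bounded both on $L^{p_1}$ (into $L^{r_1}$) and on $L^{p_2}$ (into $L^{r_2}$), is well defined on the sum $L^{p_1}+L^{p_2}\supset L^{p,q}$ and coincides with its two restrictions on $L^{p_1}\cap L^{p_2}$, so the real interpolation theorem applies to $T$ between the couples $(L^{p_1},L^{p_2})$ and $(L^{r_1},L^{r_2})$. Invoking the exactness of the functor $(\cdot,\cdot)_{(1-\theta,q)}$ with interpolation exponent $1-\theta$ then yields, with $M_i:=\|T\|_{\mathcal{L}(L^{p_i};L^{r_i})}$,
$$\|T\|_{\mathcal{L}(L^{p,q};L^{r,q})}\le M_1^{\,1-(1-\theta)}\,M_2^{\,1-\theta}=M_1^{\theta}\,M_2^{1-\theta},$$
which is exactly the asserted estimate (with $C=1$ in the exact formulation; a reference providing only a non-exact version contributes a harmless constant $C$). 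The one point that needs a word of care is the identification $L^{p,q}=(L^{p_1},L^{p_2})_{(1-\theta,q)}$ when $p_2=\infty$ (and similarly $r_2=\infty$): there one uses Calder\'on's computation of the $K$-functional of the couple $(L^{p_1},L^\infty)$ in terms of the decreasing rearrangement, which still produces the Lorentz norm; see \cite{Lemar}. I do not expect a genuine obstacle here, since the statement is classical.

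Alternatively, if one prefers a self-contained argument, one can proceed directly in the spirit of the Marcinkiewicz theorem. Given $f\in L^{p,q}$ and $\lambda>0$, split $f=f_\lambda^{(1)}+f_\lambda^{(2)}$ with $|f_\lambda^{(1)}|=(|f|-\lambda)_+$ and $|f_\lambda^{(2)}|=\min(|f|,\lambda)$; since $p_1<p$ one has $f_\lambda^{(1)}\in L^{p_1}$ and since $p<p_2$ one has $f_\lambda^{(2)}\in L^{p_2}$. From $|Tf|\le|Tf_\lambda^{(1)}|+|Tf_\lambda^{(2)}|$ and the rearrangement inequality $(Tf)^*(2t)\le (Tf_\lambda^{(1)})^*(t)+(Tf_\lambda^{(2)})^*(t)$, together with the endpoint bounds $\|Tf_\lambda^{(i)}\|_{L^{r_i}}\le M_i\|f_\lambda^{(i)}\|_{L^{p_i}}$ and the choice $\lambda=f^*(t)$, one bounds $\|Tf\|_{L^{r,q}}$ by weighted integrals of $f^*$ that are controlled by $\|f\|_{L^{p,q}}$ via Hardy's inequality; the geometric-mean shape $M_1^{\theta}M_2^{1-\theta}$ of the constant is then recovered through the scaling $T\mapsto cT$ and optimization over $c>0$. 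The only things to watch throughout are the possibility that $p_2$ or $r_2$ equals $\infty$ and the bookkeeping of which Lebesgue exponent carries the weight $\theta$; neither causes real difficulty.
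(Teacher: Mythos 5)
The paper itself offers no proof of Theorem~\ref{interpol}; it simply states it as a classical result and cites \cite{Lemar}. Your first argument---identifying $L^{p,q}=(L^{p_1},L^{p_2})_{(1-\theta,q)}$ and $L^{r,q}=(L^{r_1},L^{r_2})_{(1-\theta,q)}$ from the paper's own real-interpolation definition of Lorentz spaces and then invoking that $(\cdot,\cdot)_{(\sigma,q)}$ is an exact interpolation functor of exponent $\sigma$---is precisely the standard argument that the cited reference supplies, and you have the one delicate bit of bookkeeping right: in the paper's normalization the weight sitting on the \emph{second} space is the interpolation parameter, and since the hypotheses put the weight $\theta$ on $p_1$ (the smaller exponent, so the first slot), the correct parameter is $1-\theta$, yielding $N_1^{1-(1-\theta)}N_2^{1-\theta}=N_1^{\theta}N_2^{1-\theta}$ as claimed. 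You also rightly flag the case $p_2=\infty$ or $r_2=\infty$, which is covered by the explicit $K$-functional computation for $(L^{p_1},L^\infty)$; and the constraint $\theta\in\,]0,1[$ keeps $p,r\in\,]1,\infty[$ so the Lorentz spaces are the ones the paper actually defines. Your second, Marcinkiewicz-style sketch is a legitimate self-contained alternative---it trades the black box of the real interpolation theorem for an explicit Calder\'on splitting and Hardy's inequality, and recovers the geometric-mean constant by rescaling---but as written it is only an outline, and the abstract argument you give first is both complete and the one the paper implicitly relies on.
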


As a consequence, we obtain the following results.
\begin{prop}
\label{properties}
 For  $1<p<+\infty,$   $q \in [1, +\infty] $, then exists a constant $C>0$ such that the following estimates hold true 
\begin{enumerate}
 \item 
$\|uv\|_{L^{p,q}}
\le
C\|u\|_{L^\infty}\|v\|_{L^{p,q}}, $
\item  
$
\|T_u v\|_{L^{p,q}}\le C\|u\|_{L^\infty}\|v\|_{L^{p,q}}.
$
\item  Let us define the Riesz transform  $\mathcal{R}_{ij}=\partial_{i}\partial_j\Delta^{-1}, i,j\in\{1,2\}$, 
then $$
\|\rz u\|_{L^{p,q}}\le C\| u\|_{L^{p,q}}.
$$
\item For $s>\frac12$ we have $H^s\hookrightarrow L^{3,1}.$

\end{enumerate}

\end{prop}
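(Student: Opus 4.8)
The plan is to treat the four items in turn, each reducing to one of the two interpolation tools stated above (Theorem \ref{interpol}) together with the Littlewood--Paley characterization of $L^p$ from \eqref{LpB}. For item (1), I would fix $u \in L^\infty$ and consider the multiplication operator $T_u\colon f \mapsto uf$. It is clearly bounded on every $L^a$, $1 \le a \le \infty$, with operator norm at most $\|u\|_{L^\infty}$. Pick two exponents $p_1 < p < p_2$ in $]1,\infty[$ and a $\theta$ with $\tfrac1p = \tfrac\theta{p_1} + \tfrac{1-\theta}{p_2}$; then Theorem \ref{interpol} (applied with $r_i = p_i$) gives that multiplication by $u$ is bounded from $L^{p,q}$ to $L^{p,q}$ with norm $\lesssim \|u\|_{L^\infty}^{\theta} \|u\|_{L^\infty}^{1-\theta} = \|u\|_{L^\infty}$, which is exactly (1).

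For item (3), I would first recall the classical Calder\'on--Zygmund bound $\|\mathcal{R}_{ij} u\|_{L^a} \lesssim \|u\|_{L^a}$ for every $a \in ]1,\infty[$; since $\mathcal{R}_{ij}$ is linear, the same interpolation argument as above (choosing $p_1 < p < p_2$ in $]1,\infty[$) upgrades this to boundedness on $L^{p,q}$, giving (3). For item (2), the cleanest route is to use \eqref{LpB}: write $T_u v = \sum_q S_{q-1}u\,\Delta_q v$ and note that the Fourier support of $S_{q-1}u\,\Delta_q v$ lies in an annulus of size $\sim 2^q$, so the $L^p$-norm of $T_u v$ is comparable to $\big\|\big(\sum_q |S_{q-1}u\,\Delta_q v|^2\big)^{1/2}\big\|_{L^p}$. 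Pointwise $|S_{q-1}u| \le \|S_{q-1}u\|_{L^\infty} \lesssim \|u\|_{L^\infty}$ (Bernstein, Lemma \ref{lb}), hence this square function is $\le \|u\|_{L^\infty}\big(\sum_q |\Delta_q v|^2\big)^{1/2}$, whose $L^p$-norm is $\lesssim \|u\|_{L^\infty}\|v\|_{L^p}$ again by \eqref{LpB}. This shows $T_u\colon L^p \to L^p$ with norm $\lesssim \|u\|_{L^\infty}$ for every $p \in ]1,\infty[$, and one more application of Theorem \ref{interpol} extends it to $L^{p,q}$, proving (2). (Alternatively, (2) follows from (1) by writing $uv = T_u v + T_v u + \mathcal{R}(u,v)$ and noting $T_v u$ and $\mathcal R(u,v)$ are also bounded on $L^{p,q}$ in terms of $\|u\|_{L^\infty}\|v\|_{L^{p,q}}$, but the direct square-function argument is self-contained.)

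For item (4), fix $s > 1/2$. Sobolev embedding gives $H^s = B^s_{2,2} \hookrightarrow L^{a}$ for $a$ determined by $\tfrac1a = \tfrac12 - \tfrac s3$ when $s < 3/2$, and in particular one can choose $s_1, s_2$ with $1/2 < s_1 < s < s_2$ such that $H^{s_1} \hookrightarrow L^{p_1}$ and $H^{s_2} \hookrightarrow L^{p_2}$ with $p_1 < 3 < p_2$; then real interpolation $(L^{p_1}, L^{p_2})_{\theta,1}$ with the right $\theta$ lands in $L^{3,1}$, while $(H^{s_1}, H^{s_2})_{\theta,1}$ contains $H^s = B^s_{2,2}$ by standard Besov interpolation. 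Alternatively, and more directly, one uses the explicit characterization $\|f\|_{L^{3,1}} \sim \sum_q 2^{3q/3}\|\Delta_q f\|_{??}$... — more safely: $L^{3,1} \hookleftarrow B^{1/2}_{2,1}$ by Bernstein (each $\Delta_q f \in L^\infty$ is controlled and summation in $q$ converges), and $H^s = B^s_{2,2} \hookrightarrow B^{1/2}_{2,1}$ for $s > 1/2$, which chains to the claim. The main (and really only) obstacle here is bookkeeping the exponents so that the interpolation indices match; there is no genuine analytic difficulty, all the work having been done in Theorems \ref{convolution}, \ref{interpol} and the embeddings recalled above.
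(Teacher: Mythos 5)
Your overall strategy matches the paper's: items (1) and (3) are proved exactly as in the paper (boundedness of the multiplication operator, resp.\ the Calder\'on--Zygmund bound for $\mathcal{R}_{ij}$, on each $L^a$, then one application of Theorem~\ref{interpol}), and item (4) is also done by Sobolev embedding plus real interpolation, which is all the paper offers (it ``leaves it to the reader''). The place where you diverge in substance is item (2): you invoke as a black box the ``reverse Littlewood--Paley'' inequality, namely that a sum $\sum_q f_q$ of pieces with Fourier support in dyadic annuli is controlled in $L^p$ by the square function $\|(\sum_q|f_q|^2)^{1/2}\|_{L^p}$, whereas the paper actually \emph{proves} this step: it applies $\Delta_j$ to $T_uv$, observes the $|j-q|\le 4$ localization, bounds $|\Delta_j(\cdot)(x)|$ pointwise by a sum of Hardy--Littlewood maximal functions $\mathcal{M}(S_{q-1}u\,\Delta_q v)(x)$ via Lemma~\ref{maximal}, pulls out $\|u\|_{L^\infty}$ using \eqref{lu01}, and closes with the Fefferman--Stein vector-valued maximal inequality. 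So your route is fine as a sketch but compresses precisely the part the authors chose to spell out. Two smaller points. First, your parenthetical alternative for (2) is incorrect: $T_v u$ with $v\in L^{p,q}$ and $u\in L^\infty$ is \emph{not} bounded in $L^{p,q}$ by $\|u\|_{L^\infty}\|v\|_{L^{p,q}}$ — the blocks $\|S_{q-1}v\,\Delta_q u\|_{L^p}\lesssim\|v\|_{L^p}\|u\|_{L^\infty}$ carry no decay in $q$ and the sum diverges; you would need $u\in B^0_{\infty,1}$, so that path cannot replace the square-function argument. Second, in (4) the ``by Bernstein'' shortcut only yields $B^{1/2}_{2,1}\hookrightarrow L^3$ (sum the $L^3$ bounds $\|\Delta_q f\|_{L^3}\lesssim 2^{q/2}\|\Delta_q f\|_{L^2}$); to reach the strictly smaller Lorentz space $L^{3,1}$ you really do need the interpolation step ($B^{1/2}_{2,1}=(H^{s_1},H^{s_2})_{\theta,1}$, $H^{s_i}\hookrightarrow L^{p_i}$, $(L^{p_1},L^{p_2})_{\theta,1}=L^{3,1}$), after first noting $H^s\hookrightarrow B^{1/2}_{2,1}$ for $s>1/2$. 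With those two corrections, your argument and the paper's coincide.
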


\begin{proof} $\,$ \\ 
{\bf $(1)$} For a fixed function $u\in L^\infty$, the linear operator
 $T: \, v \mapsto uv$  belongs to $\mathcal{L}(L^p, L^p)$ with norm smaller that $\|u\|_{L^\infty}$
  and hence the result follows by interpolation from Theorem \ref{interpol}.
  
  {\bf $(3)$} In a similar way, for every $p \in ]1, + \infty[,$  $\mathcal{R}_{ij}\in \mathcal{L}(L^p, L^p)$
   thanks to the Calder\'on-Zygmund theorem and hence (3) follows again by using Theorem 
   \ref{interpol}.
     
{\bf$(2)$}  To establish the inequality, it  is again sufficient
 thanks to Theorem   \ref{interpol} to prove that for   $u\in L^\infty, v\in L^p$ we have
   $ \|T_u v \|_{L^p} \le C \|u \|_{L^\infty}\|v\|_{L^p}$.
  For this last purpose we will make use of the maximal functions tool. We will start with some  classical results in this subject. 
  For a  locally integrable function $f:\RR^3\to\RR,$ we  shall define its maximal function $\mathcal{M} f$ by
$$
\mathcal{M}f(x)=\sup_{r>0}\frac{1}{r^3}\int_{B(x,r)}|f(y)|dy.
$$
From the definition we get
\begin{equation}\label{lu01}
0\le\mathcal{M}(fg)(x)\le \|g\|_{L^\infty}\mathcal{M}f(x).
\end{equation}
It is well-known that $\mathcal{M}$ maps continuously $L^p$ to itself for $p\in]1,\infty].$
Moreover, we have the following  lemma. We refer to \cite{Stein} for a proof.
\begin{lem}\label{maximal}
\begin{enumerate}
\item Let $\psi\in \mathcal{S}(\RR^3)$ and define $\psi_\varepsilon(x)=\varepsilon^{-3}\psi(\varepsilon^{-1}x)$ for $\varepsilon>0$.
Then there exists $C>0$ such that for every $p\in [1,\infty]$, $\varepsilon \in (0, 1]$, we have 
$$
\sup_{\varepsilon>0}|\psi_\varepsilon\star f(x)|\le C\mathcal{M}f(x).
$$
In particular, we have
$$
\sup_{q\geq -1}|\Delta_q f(x)|\le C\mathcal{M}f(x).
$$
\item
Let $p\in]1,\infty]$ and $\{f_q, q\geq-1\}$ be a sequence belonging to $L^p\ell^2$. Then we have
$$
\Big\|\Big(\sum_q\big|\mathcal{M}f_q(x)\big|^2\Big)^{\frac12}\Big\|_{L^p}\le C\Big\|\Big(\sum_q\big|f_q(x)\big|^2\Big)^{\frac12}\Big\|_{L^p}.
$$
\end{enumerate}
\end{lem}

Let us now come back to the proof of $(2)$.
By using \eqref{LpB}, we have
    \begin{eqnarray*}
\|T_u v\|_{L^p}&\lesssim&\Big\|\Big(\sum_{j\geq -1}|\Delta_j(T_u v)|^2\Big)^{\frac12}\Big\|_{L^p}
=\Big\|\Big(\sum_{j\geq -1}\Big|\Delta_j\big(\sum_{|j-q|\le4}S_{q-1}u\Delta_q v\big)\Big|^2\Big)^{\frac12}\Big\|_{L^p}
\end{eqnarray*}
This yields according  to Lemma \ref{maximal} and \eqref{lu01},
\begin{eqnarray*}
\|T_u v\|_{L^p}&\lesssim&\Big\|\Big(\sum_{j\geq -1}\Big(\sum_{|j-q|\le4}\mathcal{M}(S_{q-1}u\Delta_q v)\Big)^2\Big)^{\frac12}\Big\|_{L^p}\lesssim \|u\|_{L^\infty}\Big\|\Big(\sum_{j\geq -1}\Big(\sum_{|j-q|\le4}\mathcal{M}\,\Delta_q v\Big)^2\Big)^{\frac12}\Big\|_{L^p}\\
&\lesssim&\|u\|_{L^\infty}\Big\|\Big(\sum_{q\geq-1}\big(\mathcal{M}\Delta_q v\big)^2\Big)^{\frac12}\Big\|_{L^p}
\lesssim\|u\|_{L^\infty}\Big\|\Big(\sum_{q\geq-1}\big(\Delta_q v\big)^2\Big)^{\frac12}\Big\|_{L^p}\\
&\lesssim&\|u\|_{L^\infty}\|v\|_{L^p}
\end{eqnarray*}
where the last estimate follows from  a new use of  \eqref{LpB}. This ends the proof of (2).

{\bf($4$)} This  embedding    follows  from  Sobolev embeddings 
 combined with Theorem \ref{interpol}. This is left to the reader.
\end{proof}

\subsection{Some useful  commutator estimates }
This section is devoted to the study of  some  basic commutators which will be needed
 in our main commutator estimates, especially in Theorem \ref{prop2} and Proposition \ref{propcomm} . 
Our first result reads as follows. The proof is postponed    to   Appendix \ref{appendixB}. 
 \begin{lem}
 \label{commu}Given $(p,r,\rho,m)\in[1,+\infty]^4$ such that  
 $$1+\frac1p=\frac1m+\frac1\rho+\frac1r, \quad p\geq r\quad\hbox{and}\quad\rho>3(1-\frac1r).
 $$ Let $f,g$ and $h$ be three functions such that $\nabla f\in L^\rho, g\in L^m$ and $x\,\mathcal{F}^{-1}h\in L^{r}$. Then
 $$
 \big\|\big[h(\textnormal{D}),f \big]g\big\|_{L^p}\leq C \|x\mathcal{F}^{-1}h\|_{L^{r}}\|\nabla f\|_{L^\rho}\|g\|_{L^{m}}.
 $$
where $C$ is a constant.
 \end{lem}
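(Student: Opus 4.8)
The plan is to reduce the estimate to a kernel representation of the commutator and then to elementary convolution inequalities. Writing $\check h:=\mathcal F^{-1}h$, so that $h(\textnormal{D})u=\check h * u$, one has the pointwise identity
$$\big[h(\textnormal{D}),f\big]g(x)=\int_{\mathbb R^3}\check h(z)\,\big(f(x-z)-f(x)\big)\,g(x-z)\,dz.$$
By a routine density reduction — regularize $h$ so that $\check h$ is Schwartz, establish the bound with a constant depending only on the three norms in the statement, and pass to the limit — we may assume this formula makes classical sense. The essential feature is that the increment $f(x-z)-f(x)$ is comparable to $|z|$, so that the genuinely singular kernel $\check h$ is only ever paired against $|z|$; thus the natural object is $K(z):=z\,\check h(z)$, which satisfies $\|K\|_{L^r}=\|x\,\mathcal F^{-1}h\|_{L^r}$ by hypothesis.

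To exploit this I would invoke the classical pointwise bound (valid for $f\in W^{1,1}_{\textnormal{loc}}$, hence in particular when $\nabla f\in L^\rho$ with $\rho\ge1$)
$$\big|f(x-z)-f(x)\big|\ \lesssim\ |z|\,\big(\mathcal M(\nabla f)(x)+\mathcal M(\nabla f)(x-z)\big),$$
$\mathcal M$ being the Hardy--Littlewood maximal operator; it follows from the fundamental theorem of calculus together with elementary averaging. Inserting this into the kernel formula splits the commutator into two terms,
$$\big|[h(\textnormal{D}),f]g(x)\big|\ \lesssim\ \mathcal M(\nabla f)(x)\,\big(|K|*|g|\big)(x)+\big(|K|*\big(\mathcal M(\nabla f)\,|g|\big)\big)(x).$$
For the first term I would apply Hölder's inequality and then Young's convolution inequality: with $\tfrac1p=\tfrac1\rho+\tfrac1s$ and $1+\tfrac1s=\tfrac1r+\tfrac1m$ one gets $\|\mathcal M(\nabla f)\,(|K|*|g|)\|_{L^p}\le\|\mathcal M(\nabla f)\|_{L^\rho}\,\|K\|_{L^r}\,\|g\|_{L^m}$, and eliminating $s$ yields exactly the arithmetic relation $1+\tfrac1p=\tfrac1m+\tfrac1\rho+\tfrac1r$. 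For the second term, Young's inequality gives $\||K|*(\mathcal M(\nabla f)\,|g|)\|_{L^p}\le\|K\|_{L^r}\,\|\mathcal M(\nabla f)\,|g|\|_{L^t}$ with $1+\tfrac1p=\tfrac1r+\tfrac1t$, which is admissible precisely because $p\ge r$; one then closes with Hölder, $\|\mathcal M(\nabla f)\,|g|\|_{L^t}\le\|\mathcal M(\nabla f)\|_{L^\rho}\|g\|_{L^m}$, the exponents matching by the same relation. Throughout one uses that $\mathcal M$ is bounded on $L^\rho$.

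The main obstacle — and the reason the hypotheses look slightly unusual — is that no smoothness, nor even local integrability, of $\check h=\mathcal F^{-1}h$ is assumed: the only quantitative control is $z\,\check h(z)\in L^r$, and when $r\le\tfrac32$ the kernel $\check h$ itself need not lie in $L^1_{\textnormal{loc}}$. The commutator structure must therefore be used honestly, as above, to turn $\check h$ into the integrable kernel $K$; and the interplay between the residual singularity of $K$ near the origin and the Lebesgue regularity $\nabla f\in L^\rho$ is what is encoded in the constraint $\rho>3\,(1-\tfrac1r)$ — in practice this is the condition that makes the near-origin/tail splitting $\check h=\check h\mathbf 1_{\{|z|\le1\}}+\check h\mathbf 1_{\{|z|>1\}}$ close in the relevant convolution estimates, in particular guaranteeing $\mathcal M$-boundedness ($\rho>1$) in the regime of large $r$. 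Once these convolution inequalities are in place the argument is merely bookkeeping of exponents; the endpoint cases ($\rho$ or $m$ equal to $\infty$, $r=1$) and the density reduction require only routine care.
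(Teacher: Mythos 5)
Your kernel representation and the observation that the only useful quantity coming out of $\check h$ is $K(z)=z\,\check h(z)$ are exactly right, and they are also the starting point of the paper's proof. But your way of exploiting the increment $f(x-z)-f(x)$ --- passing to the Haj\l{}asz-type pointwise bound $|f(x-z)-f(x)|\lesssim |z|\,(\mathcal M(\nabla f)(x)+\mathcal M(\nabla f)(x-z))$ and splitting the commutator into two pieces, one with $\mathcal M(\nabla f)$ outside the convolution and one inside --- does not cover the whole exponent range allowed by the hypotheses.

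The problem is in your first term. You write $\frac1p=\frac1\rho+\frac1s$ and $1+\frac1s=\frac1r+\frac1m$, so $\frac1s=\frac1r+\frac1m-1$, and for Young's inequality this must be nonnegative; equivalently you need $\frac1m+\frac1r\ge1$, i.e.\ $p\le\rho$. This is \emph{not} implied by $1+\frac1p=\frac1m+\frac1\rho+\frac1r$, $p\ge r$, $\rho>3(1-\frac1r)$. Concretely, $(p,r,\rho,m)=(4,2,2,4)$ satisfies all three hypotheses (indeed $1+\frac14=\frac14+\frac12+\frac12$, $4\ge2$, and $2>\frac32$), yet $\frac1m+\frac1r=\frac34<1$, so the auxiliary exponent $s$ would have to satisfy $\frac1s=-\frac14$. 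Here your first term cannot be estimated by the Hölder–Young scheme you propose. A secondary issue: the hypotheses permit $\rho=1$ (take, e.g., $r<\frac32$), where $\mathcal M$ is not bounded on $L^\rho$, so even the second term of your splitting breaks down in that corner.

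You also never actually use the hypothesis $\rho>3(1-\frac1r)$; in your write-up it is invoked only as an informal justification, but nothing in the two Hölder/Young chains requires it. In the paper's argument this constraint has a precise job. There one keeps the exact Taylor remainder
$f(y)-f(x)=\int_0^1(y-x)\cdot\nabla f(x+t(y-x))\,dt$, which turns the commutator into an integral over $t\in(0,1)$ of convolutions against the rescaled kernel $\Phi_t(z)=t^{-3}\Phi(z/t)$ with $\Phi(z)=z\check h(z)$. One then factors $|\Phi|=|\Phi|^\alpha|\Phi|^\beta$ with $\alpha+\beta=1$, applies Hölder inside the $z$-integral, and finishes with Young in $L^r$ for each factor separately. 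This symmetric $\alpha$--$\beta$ split provides a full one-parameter family of exponent chains, enough to match the whole admissible range, and it produces $\|\Phi_t\|_{L^r}^\beta\sim t^{3\beta(\frac1r-1)}$ in the $t$-integral; the condition $\rho>3(1-\frac1r)$ (equivalently $\beta<\tfrac13\tfrac{r}{r-1}$) is precisely what makes $\int_0^1 t^{3\beta(\frac1r-1)}\,dt$ finite. Your replacement of the Taylor integral by the all-scales maximal-function bound throws away exactly this scale parameter $t$ (and, with it, the freedom encoded in $\alpha,\beta$), which is why your decomposition only closes for $\frac1m+\frac1r\ge1$. To repair the proof you would need to retain the exact Taylor representation and a flexible factorization of the kernel, essentially as in the paper, rather than the rigid two-piece Haj\l{}asz split.
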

 
As an application of Lemma \ref{commu} we get the following commutator estimates.
\begin{lem}\label{lemcom}
Let $p,m,\rho\in[1,+\infty]$ such that $\frac1p=\frac1m+\frac1\rho.$ Then, there exists $C>0$ such that  for $\nabla f\in L^\rho, g\in L^m$ and for every $q\in \NN\cup\{-1\}$
$$
\|[\Delta_q,f]g\|_{\dot{W}^{1,p}}\le C\|\nabla f\|_{L^\rho}\|g\|_{L^m},
$$
with the following definition $\|\varphi\|_{\dot{W}^{1,p}}=\|\nabla\varphi\|_{L^p}.$
\end{lem}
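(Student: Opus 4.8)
The plan is to deduce this directly from Lemma \ref{commu}. Since $\|[\Delta_q,f]g\|_{\dot{W}^{1,p}}=\|\nabla[\Delta_q,f]g\|_{L^p}$, the first step is to expand, for each coordinate direction $k\in\{1,2,3\}$,
$$\partial_k[\Delta_q,f]g=[\partial_k\Delta_q,f]g-(\partial_k f)\,\Delta_q g,$$
which is immediate from $[\Delta_q,f]g=\Delta_q(fg)-f\Delta_q g$ and the Leibniz rule. The second term is elementary: Hölder's inequality with $\frac1p=\frac1\rho+\frac1m$ together with the uniform bound $\|\Delta_q g\|_{L^m}\le C\|g\|_{L^m}$ (the convolution kernel of $\Delta_q$ has $L^1$-norm independent of $q$, and similarly for $\Delta_{-1}$) gives $\|(\partial_k f)\Delta_q g\|_{L^p}\le C\|\nabla f\|_{L^\rho}\|g\|_{L^m}$.

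For the genuine commutator term I would apply Lemma \ref{commu} with $h=h_q:=i\xi_k\,\varphi(2^{-q}\cdot)$ when $q\ge0$ (and $h=i\xi_k\,\chi$ when $q=-1$), so that $h_q(\textnormal{D})=\partial_k\Delta_q$, choosing the exponent $r=1$. This choice is in fact \emph{forced}: the compatibility relation $1+\frac1p=\frac1m+\frac1\rho+\frac1r$ required by Lemma \ref{commu}, combined with our hypothesis $\frac1p=\frac1m+\frac1\rho$, leaves no alternative; and it is admissible, since then $p\ge r=1$ and $\rho>3(1-\frac1r)=0$ hold automatically.

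The one point that needs a (short) computation is that $\|x\,\mathcal{F}^{-1}h_q\|_{L^1}$ is bounded uniformly in $q$. Writing $\Psi:=\mathcal{F}^{-1}(i\xi_k\varphi)$, which is a Schwartz function, one has $h_q=2^q\,(i\xi_k\varphi)(2^{-q}\cdot)$, hence $\mathcal{F}^{-1}h_q(x)=2^{4q}\Psi(2^qx)$, and the substitution $y=2^qx$ yields $\|x\,\mathcal{F}^{-1}h_q\|_{L^1}=\|y\,\Psi(y)\|_{L^1}$, independent of $q$; this cancellation of the powers of $2^q$ is exactly the scaling invariance encoded in the choice $r=1$. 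For $q=-1$ the kernel $\mathcal{F}^{-1}(i\xi_k\chi)$ is a fixed Schwartz function, so $x$ times it lies in $L^1$ as well. Feeding this into Lemma \ref{commu} gives $\|[\partial_k\Delta_q,f]g\|_{L^p}\le C\|\nabla f\|_{L^\rho}\|g\|_{L^m}$ uniformly in $q$, and summing over $k$ the two contributions completes the argument. I do not expect any genuine obstacle here: the entire substance of the estimate is already contained in Lemma \ref{commu}, and the only care required is the identification $r=1$ and the verification of the scaling.
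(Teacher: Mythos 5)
Your proof is correct and follows the same route as the paper's: split $\partial_k[\Delta_q,f]g$ into $[\partial_k\Delta_q,f]g - (\partial_k f)\Delta_q g$, handle the second term by H\"older plus the $q$-uniform $L^m$ boundedness of $\Delta_q$, and apply Lemma~\ref{commu} with $r=1$ to the first term after checking via scaling that $\|x\,\mathcal{F}^{-1}h_q\|_{L^1}$ is bounded uniformly in $q$. The extra remark that $r=1$ is forced by the index relation and that the constraints $p\ge r$ and $\rho>3(1-1/r)$ then hold trivially is a nice clarification but does not change the argument.
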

\begin{proof}
We write for $i=1,2,3,$
\begin{eqnarray*}
\partial_i\big( [\Delta_q,f]g\big)&=&[\partial_i\Delta_q,f]g-\partial_if\Delta_qg
=[h_q(\hbox{D}),f]g-\partial_if\Delta_qg,
\end{eqnarray*}
with $ h_q(\xi)=2^q\phi(2^{-q}\xi),$ and $\phi\in\mathcal{S}(\RR^3).$ Using Lemma \ref{commu} we get
\begin{eqnarray*}
\|[h_q(\hbox{D}),f]g\|_{L^p}&\le& C \|x\mathcal{F}^{-1}h_q\|_{L^1}\|\nabla f\|_{L^\rho}\|g\|_{L^m}
\le C\|\nabla f\|_{L^\rho}\|g\|_{L^m}.
\end{eqnarray*}
For the other term, the  H\"{o}lder inequality yields
\begin{eqnarray*}
\|\partial_if\Delta_qg\|_{L^p}&\le&C\|\nabla f\|_{L^\rho}\|\Delta_qg\|_{L^m}\\
&\le&C\|\nabla f\|_{L^\rho}\|g\|_{L^m}.
\end{eqnarray*}
\end{proof}
\subsection{Some algebraic identities}
We intend in this paragraph to describe first the action of the  operator $\frac{\partial_r}{r}\Delta^{-1}u$ over axisymmetric functions. We will show that it behaves like Riesz transforms. The second part is concerned with   the study of some algebraic identities involving  some multipliers which will appear in a natural way when try to study our main   commutator $\big[{\partial_r}/{r})\Delta^{-1},v\cdot\nabla\big]\rho.$
\begin{prop}\label{prop1}
We have for  every axisymmetric smooth scalar function $u$
\begin{equation}
\label{prop1-1}
({\partial_r}/{r})\Delta^{-1}u(x)=\frac{x_2^2}{r^2}\mathcal{R}_{11}u(x)+\frac{x_1^2}{r^2}\mathcal{R}_{22}u(x)-2\frac{x_1x_2}{r^2}\mathcal{R}_{12}u(x),
\end{equation}
with 
$
\mathcal{R}_{ij}=\partial_{ij}\Delta^{-1}.$ Moreover, for $p\in]1,\infty[, q\in[1,\infty]$ there exists $C>0$ such that
\begin{equation}
\label{prop1-2}
\|({\partial_r}/{r})\Delta^{-1}u\|_{L^{p,q}}\le C\| u\|_{L^{p,q}}.
\end{equation}

\end{prop}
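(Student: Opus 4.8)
\textbf{Proof plan for Proposition \ref{prop1}.}

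The plan is to establish the pointwise identity \eqref{prop1-1} first and then deduce the boundedness \eqref{prop1-2} as an immediate corollary. For the identity, I would work in cylindrical coordinates. Since $u$ is axisymmetric, $\Delta^{-1}u$ is also axisymmetric, so write $w := \Delta^{-1}u = w(r,z)$. The starting point is the elementary relation between the horizontal Cartesian derivatives and the radial derivative acting on an axisymmetric function: for any axisymmetric $w$ one has $\partial_1 w = \frac{x_1}{r}\,\partial_r w$ and $\partial_2 w = \frac{x_2}{r}\,\partial_r w$, hence
\begin{equation*}
\partial_{11}w = \frac{x_1^2}{r^2}\,\partial_{rr}w + \frac{x_2^2}{r^3}\,\partial_r w,\qquad
\partial_{22}w = \frac{x_2^2}{r^2}\,\partial_{rr}w + \frac{x_1^2}{r^3}\,\partial_r w,\qquad
\partial_{12}w = \frac{x_1 x_2}{r^2}\,\partial_{rr}w - \frac{x_1 x_2}{r^3}\,\partial_r w.
\end{equation*}
Forming the combination $\frac{x_2^2}{r^2}\partial_{11}w + \frac{x_1^2}{r^2}\partial_{22}w - 2\frac{x_1 x_2}{r^2}\partial_{12}w$ and simplifying (the $\partial_{rr}w$ terms collect to $\frac{(x_1^2+x_2^2)(x_1^2 x_2^2 + x_1^2 x_2^2 - 2x_1^2 x_2^2)}{r^4}\cdots$ — in fact the $\partial_{rr}w$ coefficient vanishes identically and the $\partial_r w$ coefficient collapses to $\frac{1}{r}$), one finds that this combination equals exactly $\frac{1}{r}\partial_r w = (\partial_r/r)\Delta^{-1}u$. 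Since $\mathcal{R}_{ij}u = \partial_{ij}\Delta^{-1}u = \partial_{ij}w$, this is precisely \eqref{prop1-1}. The one delicate point here is justifying that $\Delta^{-1}$ commutes with these manipulations on a reasonable class of functions and that the formulas make sense near $r=0$; since the statement is for smooth (say Schwartz, or at least smooth with enough decay) axisymmetric $u$, the function $w$ is smooth, and the apparent singularities at $r=0$ in the intermediate expressions cancel because $\partial_r w$ vanishes on the axis (an axisymmetric smooth function has $\partial_r w = O(r)$ near $r=0$), so $\frac{1}{r}\partial_r w$ extends smoothly. I would state this regularity caveat explicitly.

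For \eqref{prop1-2}, observe that the coefficients $\frac{x_2^2}{r^2}$, $\frac{x_1^2}{r^2}$, $\frac{x_1 x_2}{r^2}$ are all bounded by $1$ in absolute value. Writing $u$ via \eqref{prop1-1} as $\sum a_{ij}(x)\mathcal{R}_{ij}u$ with $\|a_{ij}\|_{L^\infty}\le 2$, we apply part $(1)$ of Proposition \ref{properties} (multiplication by an $L^\infty$ function is bounded on $L^{p,q}$) together with part $(3)$ (the Riesz transforms $\mathcal{R}_{ij}$ are bounded on $L^{p,q}$ for $1<p<\infty$, $q\in[1,\infty]$, which itself came from Calderón–Zygmund theory plus the interpolation Theorem \ref{interpol}), to obtain
\begin{equation*}
\|(\partial_r/r)\Delta^{-1}u\|_{L^{p,q}} \le \sum_{i,j} \|a_{ij}\|_{L^\infty}\,\|\mathcal{R}_{ij}u\|_{L^{p,q}} \le C\,\|u\|_{L^{p,q}}.
\end{equation*}

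I do not expect a serious obstacle: the computation of the coefficients is the only place requiring care, and the functional-analytic half is a one-line consequence of results already proved in the excerpt. If one wants to avoid even the mild regularity discussion around $r=0$, an alternative is to verify \eqref{prop1-1} first for axisymmetric $u$ whose Fourier transform is supported away from the vertical axis $\{\xi_1=\xi_2=0\}$ (a dense class), where all operators are unambiguous, and then pass to the limit; but the direct cylindrical-coordinate computation above is cleaner and I would present that.
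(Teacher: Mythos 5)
Your proposal is correct and follows essentially the same route as the paper: express $\Delta^{-1}u$ as an axisymmetric function, pass to cylindrical coordinates, and identify the Riesz-transform combination, then deduce \eqref{prop1-2} from parts (1) and (3) of Proposition \ref{properties}. The only cosmetic difference is that you compute all three second partials $\partial_{ij}w$ directly and verify the cancellation by hand, whereas the paper gets there slightly faster by writing $\partial_{11}+\partial_{22}=(\partial_r/r)+\partial_{rr}$ and expanding only $\partial_{rr}=(\tfrac{x_1}{r}\partial_1+\tfrac{x_2}{r}\partial_2)^2$; the algebra is the same.
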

\begin{proof}
We set $f=\Delta^{-1}u,$ then we can show from Biot-Savart law that $f$ is also axisymmetric. Hence we get
 by using  polar coordinates that 
\begin{equation}
\label{laplace1}
\partial_{11}f+\partial_{22}f  =  (\partial_r/r)f+\partial_{rr}f
\end{equation}
 where 
$$
\partial_r=\frac{x_1}{r}\partial_1+\frac{x_2}{r}\partial_2.
$$
By using this expression of $\partial_{r}$, we obtain
\begin{eqnarray*}
\partial_{rr}&=&\big(\frac{x_1}{r}\partial_1+\frac{x_2}{r}\partial_2\big)^2
=\partial_r(\frac{x_1}{r})\partial_1+\partial_r(\frac{x_2}{r})\partial_2+\frac{x_1^2}{r^2}\partial_{11}+\frac{x_2^2}{r^2}\partial_{22}
+\frac{2x_1 x_2}{r^2}\partial_{12}.\\
&=&\frac{x_1^2}{r^2}\partial_{11}+\frac{x_2^2}{r^2}\partial_{22}+\frac{2x_1 x_2}{r^2}\partial_{12}
\end{eqnarray*}
 since
 $$
\partial_r(\frac{x_i}{r})=0,\quad\forall i\in\{1,2\}.
$$
This yields by using \eqref{laplace1} that 
\begin{eqnarray*}
{\partial_r \over r} f &=&(1-\frac{x_1^2}{r^2})\partial_{11} f +(1-\frac{x_2^2}{r^2})\partial_{22} f -\frac{2x_1 x_2}{r^2}\partial_{12}f \\
&=&\frac{x_2^2}{r^2}\partial_{11} f +\frac{x_1^2}{r^2}\partial_{22}f -\frac{2x_1 x_2}{r^2}\partial_{12}f.
\end{eqnarray*}
 To  get \eqref{prop1-1},   it suffices replace $f$ by  $\Delta^{-1} u $. 

The estimate  \eqref{prop1-2} is a consequence of \eqref{prop1-1} and the estimates (1) and  (3)
 of Proposition \eqref{properties} since  for every $i,j\in\{1,2\},\,\frac{x_{i}x_{j}}{r^2} \in L^\infty.$
\end{proof}

We  shall also need  the following identities and estimates.
\begin{lem}\label{ident001}
 For every $f\in \mathcal{S}(\RR^3, \mathbb{R})$, we have
\begin{enumerate}
\item For $ i,j\in\{1,2,3\}$
$$
\Delta^{-1}(x_i\partial_j f)=x_i\partial_j\Delta^{-1} f
+ \mathcal{L}_{ij} f$$
 where $\mathcal{L}_{ij} f = 
-2\mathcal{R}_{ij}\Delta^{-1}$f.
 Moreover, we have the estimates: 
\begin{eqnarray}
\label{Lijinfty} & & 
 \| \nabla \mathcal{L}_{ij} f \|_{L^\infty} \leq C \| f \|_{L^{3, 1}},  \\
 \label{Lij3}  & & \| \nabla^2 \mathcal{L}_{ij} f\|_{L^{p,q}} \leq C \| f \|_{L^{p, q}}, 
  \quad
  p \in]1, +\infty [, \, q \in [1, +\infty]
  \end{eqnarray}
\item  For $i,j,k\in\{1,2,3\}$
$$
\rz(x_k f)=x_k\rz f+\mathcal{L}_{ij}^kf,
$$
with
$$
\mathcal{L}_{ij}^k:=-2\partial_k\Delta^{-1}\rz+\delta_{ik}\partial_j\Delta^{-1}+\delta_{jk}\partial_i\Delta^{-1}$$
where $\delta_{ij}$ denotes  the Kronecker symbol.
Moreover we have the estimates 
\begin{eqnarray}
\label{Lijkinfty}   & & \| \mathcal{L}_{ij}^k  f \|_{L^\infty} \leq C  \|f\|_{L^{3, 1}}, \\
\label{Lijk3} & & \| \nabla \mathcal{L}_{ij}^k f\|_{L^{p,q}} \leq C \|  f \|_{L^{p,q}}, \quad
  p \in]1, +\infty [, \, q \in [1, +\infty].
 \end{eqnarray}
\end{enumerate}
\end{lem}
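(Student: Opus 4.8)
The statement to prove is Lemma \ref{ident001}, consisting of two algebraic identities ((1) for $\Delta^{-1}(x_i\partial_j f)$ and (2) for $\mathcal{R}_{ij}(x_k f)$) together with the four associated operator norm estimates. The strategy in both cases is the same: commute the weight $x_k$ (or $x_i$) past the Fourier multipliers by exploiting that multiplication by $x_\ell$ corresponds, on the Fourier side, to the differential operator $i\partial_{\xi_\ell}$, and then read off the correction terms as explicit compositions of Riesz transforms and $\nabla\Delta^{-1}$. The estimates then follow by combining the convolution/Lorentz machinery already set up: \eqref{ndelta-1}, part (3) of Proposition \ref{properties}, and the Calderón–Zygmund boundedness of $\mathcal{R}_{ij}$ on $L^{p}$ together with the interpolation Theorem \ref{interpol}.

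\textbf{Identity (1).} First I would establish $\Delta^{-1}(x_i\partial_j f)=x_i\partial_j\Delta^{-1}f+\mathcal{L}_{ij}f$ with $\mathcal{L}_{ij}=-2\mathcal{R}_{ij}\Delta^{-1}$. The cleanest route is on the Fourier side: writing $m(\xi)=-|\xi|^{-2}$ for the symbol of $\Delta^{-1}$, one has $\widehat{x_i g}(\xi)=i\partial_{\xi_i}\widehat g(\xi)$, so
\[
\widehat{\Delta^{-1}(x_i\partial_j f)}(\xi)=m(\xi)\,i\partial_{\xi_i}\big(i\xi_j\widehat f(\xi)\big)
= i\partial_{\xi_i}\big(m(\xi)i\xi_j\widehat f(\xi)\big)-\big(i\partial_{\xi_i}m(\xi)\big)i\xi_j\widehat f(\xi).
\]
The first term is exactly $\widehat{x_i\partial_j\Delta^{-1}f}$. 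For the second, $\partial_{\xi_i}m(\xi)=2\xi_i|\xi|^{-4}$, so $-(i\partial_{\xi_i}m)(i\xi_j)\widehat f= -2\xi_i\xi_j|\xi|^{-4}\widehat f$, which is the symbol of $-2\partial_i\partial_j\Delta^{-1}\Delta^{-1}f=-2\mathcal{R}_{ij}\Delta^{-1}f$, giving $\mathcal{L}_{ij}f$. (One should remark that these manipulations are justified for $f\in\mathcal{S}$ because $x_i\partial_j f,\ x_i\partial_j\Delta^{-1}f$ and $\mathcal{R}_{ij}\Delta^{-1}f$ are all well-defined tempered distributions whose low-frequency behaviour is controlled; a word on why no extra harmonic polynomial appears — the identity is between functions that all decay — suffices.) For \eqref{Lijinfty}: $\nabla\mathcal{L}_{ij}f=-2\nabla\mathcal{R}_{ij}\Delta^{-1}f=-2\mathcal{R}_{ij}(\nabla\Delta^{-1}f)$, and since $\mathcal{R}_{ij}$ is bounded on $L^\infty$?—no; instead write it as $-2\partial_i\partial_j\Delta^{-1}(\nabla\Delta^{-1}f)$ is not quite it either. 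The correct reading: $\nabla\mathcal{L}_{ij}= -2\nabla\partial_i\partial_j\Delta^{-2}$, a Fourier multiplier homogeneous of degree $-1$; apply it as $\mathcal{R}_{ij}\circ(\nabla\Delta^{-1})$ and use \eqref{ndelta-1} to land $\nabla\Delta^{-1}f$ in... no, $\mathcal{R}_{ij}$ is not $L^\infty$-bounded. The genuinely correct argument: $\nabla\mathcal{L}_{ij}f = c\,(|\cdot|^{-2}*) $ composed with Riesz transforms of $f$; since Riesz transforms are bounded on $L^{3,1}$ (part (3) of Proposition \ref{properties}) and $|x|^{-2}*$ maps $L^{3,1}\to L^\infty$ by \eqref{ndelta-1}, we get $\|\nabla\mathcal{L}_{ij}f\|_{L^\infty}\lesssim\|f\|_{L^{3,1}}$. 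For \eqref{Lij3}: $\nabla^2\mathcal{L}_{ij}f=-2\mathcal{R}_{ij}\mathcal{R}_{kl}f$ for appropriate $k,l$, a composition of Riesz transforms, hence bounded on $L^{p,q}$ by part (3) of Proposition \ref{properties}.

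\textbf{Identity (2) and the main obstacle.} For $\mathcal{R}_{ij}(x_k f)=x_k\mathcal{R}_{ij}f+\mathcal{L}_{ij}^kf$ I would repeat the Fourier-side commutation, now with the symbol $r_{ij}(\xi)=-\xi_i\xi_j|\xi|^{-2}$: the correction term is $-i(\partial_{\xi_k}r_{ij})(\xi)\widehat f(\xi)$, and a direct computation $\partial_{\xi_k}r_{ij}=-\delta_{ik}\xi_j|\xi|^{-2}-\delta_{jk}\xi_i|\xi|^{-2}+2\xi_i\xi_j\xi_k|\xi|^{-4}$ yields exactly $\mathcal{L}_{ij}^k=-2\partial_k\Delta^{-1}\mathcal{R}_{ij}+\delta_{ik}\partial_j\Delta^{-1}+\delta_{jk}\partial_i\Delta^{-1}$. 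The estimate \eqref{Lijkinfty}: each summand is $\nabla\Delta^{-1}$ applied to a Riesz transform (or to $f$ itself), so bounded $L^{3,1}\to L^\infty$ via part (3) of Proposition \ref{properties} and \eqref{ndelta-1}; \eqref{Lijk3} follows since $\nabla\mathcal{L}_{ij}^k$ is a composition of Riesz transforms, bounded on $L^{p,q}$. I expect the only real subtlety — the main obstacle — to be the rigorous justification of the Fourier-side identities near $\xi=0$: the multipliers $|\xi|^{-2}$, $\xi_i|\xi|^{-2}$, $\xi_i\xi_j|\xi|^{-4}$ are singular at the origin, so one must check that all the distributions involved are genuinely tempered, that differentiating $\widehat f$ (smooth, rapidly decaying) against them is legitimate, and that the Leibniz splitting produces no additional delta-supported term at $\xi=0$; a short argument handling the low-frequency part separately (e.g. decomposing $f$ into low and high frequencies, or invoking that all terms in the claimed identity decay at infinity so their difference is a polynomial, hence zero) closes this gap. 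Everything else is bookkeeping of homogeneity degrees plus the already-proven continuity statements.
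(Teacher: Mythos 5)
Your proposal is correct and reaches the same identities and estimates, but by a genuinely different route. The paper works in physical space: it applies $\Delta$ to the candidate identity, checks by a direct expansion (using $\Delta(x_i g)=x_i\Delta g+2\partial_i g$) that both sides have the same Laplacian, and then disposes of the resulting harmonic polynomial ambiguity by a decay argument — showing via the pointwise bound $|x_i\partial_j\Delta^{-1}f|\lesssim |\cdot|^{-1}*|f| + |\cdot|^{-2}*|y_j f|$ and the Lorentz convolution estimates that all terms lie in $L^p$ for $p>3$, hence no nonzero harmonic polynomial can appear. You instead push the multiplication by $x_i$ to the Fourier side via $\widehat{x_i g}=i\partial_{\xi_i}\widehat g$ and Leibniz, reading off the correction as a derivative of the symbol; this is cleaner computationally and makes the structure of $\mathcal L_{ij}$ and $\mathcal L_{ij}^k$ transparent, but, as you observe yourself, it shifts the same subtlety to justifying the product rule for the singular multipliers at $\xi=0$ and verifying that no distribution supported at the origin is produced — which is equivalent to the paper's "no harmonic polynomial" step, just phrased dually. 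The derivation of the four estimates is essentially identical in both: write each correction operator as a composition of Riesz transforms and a single $\nabla\Delta^{-1}$, apply Proposition \ref{properties}-(3) and \eqref{ndelta-1} in the right order. (Two small points: there are a couple of sign slips in your intermediate symbol computations — e.g. $-(i\partial_{\xi_i}m)(i\xi_j)=+2\xi_i\xi_j|\xi|^{-4}$, not $-2$, and the symbol of $\mathcal R_{ij}$ is $+\xi_i\xi_j|\xi|^{-2}$ under the standard convention — but these cancel and your final operator identities agree with the statement; and your first attempt at \eqref{Lijinfty} correctly backs away from the order "$\mathcal R_{ij}$ last" and lands on the right factorization with $\nabla\Delta^{-1}$ applied last.)
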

\begin{proof}
{\bf $(1)$}  We  first expand
\begin{eqnarray*}
\Delta(x_i\partial_j\Delta^{-1} f-2\mathcal{R}_{ij}f) & =&2\partial_{ij}\Delta^{-1}f+x_i\partial_j f-2\mathcal{R}_{ij}f\\
&=&x_j\partial_j f
\end{eqnarray*}
This  yields 
$$
\Delta^{-1}(x_j\partial_j f)=x_i\partial_j\Delta^{-1} f-2\mathcal{R}_{ij}f+P(x),
$$
with  $P$ a harmonic polynomial. We can easily see that the r.h.s of this identity and $\mathcal{R}_{ij}f$ are decreasing at infinity. Thus to prove that $P$ is zero it suffices to prove that $x_i\partial_j\Delta^{-1} f$ goes to zero at infinity.  Since 
\begin{eqnarray*}
\big|x_i\partial_j\Delta^{-1} f\big|&\lesssim& |x_j|\int_{\RR^3}\frac{|f(y)|}{|x-y|^2}dy
\lesssim \int_{\RR^3}\frac{|f(y)|}{|x-y|}dy+\int_{\RR^3}\frac{|y_jf(y)|}{|x-y|^2}dy.
\end{eqnarray*}
Using Proposition \ref{convolution}, we get  that $x_i\partial_j\Delta^{-1} f\in L^p$, for every $p>3.$ Hence we get $P=0.$

The estimates \eqref{Lijinfty}, \eqref{Lij3} are a direct consequence of the above expression 
  and \eqref{ndelta-1} and the estimate (3) of Proposition \ref{properties}.

{\bf($2$)}   We use the same  idea as previously. We first  get the identity
\begin{eqnarray*} \Delta \mathcal{L}_{ij}^k f &= & 
  \Delta \big( \mathcal{R}_{ij}(x_{k} f) - x_{k} \mathcal{R}_{ij} f \big)
   =  \partial_{ij}(x_{k} f ) -  2\partial_{k} \mathcal{R}_{ij} f  - x_{k} \partial_{ij} f \\
    &  = &  \delta_{ik} \partial_{j} f + \delta_{jk} \partial_{i} f - 2 \partial_{k} \mathcal{R}_{ij}f
\end{eqnarray*}
and by the same argument as above, we  finally obtain that 
$$   \mathcal{R}_{ij}(x_{k} f) - x_{k} \mathcal{R}_{ij} f 
 =   \delta_{ik} \partial_{j} \Delta^{-1} f + \delta_{jk} \partial_{i}\Delta^{-1} f - 2 \partial_{k} \mathcal{R}_{ij}
 \Delta^{-1}f.$$
 The estimates \eqref{Lijkinfty}, \eqref{Lijk3} are a direct consequence of the above expression 
  and \eqref{ndelta-1} and the estimate (3) of Proposition \ref{properties}.
\end{proof}

\section{Commutator estimates}
\label{sectioncom}

\subsection{The commutator between   the advection operator  and   ${\partial_{r } \over r } \Delta^{-1}$}
\label{subsec1}
In this part we discuss the commutation between the operators $\frac{\partial_r}{r}\Delta^{-1}$ and $v\cdot\nabla.$
This is a crucial estimate in order to  get better a priori estimates for the solution
 of  \eqref{bsintro} by using our transformation. 
 Our result reads as follows.
\begin{Theo}\label{prop2}
Let $v$ be an axisymmetric smooth and divergence free without swirl  vector field  and $\rho$ an axisymmetric smooth scalar function. Then we have,  with the notation $x_h=(x_1,x_2),$ that 
$$
\Big\|\big[({\partial_r}/{r})\Delta^{-1},v\cdot\nabla\big]\rho\Big\|_{L^{3,1}}\lesssim\|\omega_{\theta}/r\|_{L^{3,1}}\big(\|\rho x_h\|_{B_{\infty,1}^0\cap L^2}+\|\rho\|_{B_{2,1}^{\frac12}}\big).$$
\end{Theo}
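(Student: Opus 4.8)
The plan is to reduce $(\partial_r/r)\Delta^{-1}$ to the horizontal Riesz transforms, then to exploit the swirl-free Biot--Savart structure so that the only velocity quantity that survives is the \emph{bounded scalar} $v^r/r$ (at the price of transferring the weight $x_h$ onto $\rho$), and finally to estimate what is left by Bony's decomposition together with the commutator lemmas of Section~\ref{prelim1}. First I would invoke Proposition~\ref{prop1} to write, on axisymmetric functions, $(\partial_r/r)\Delta^{-1}=\sum_{i,j\in\{1,2\}}a_{ij}\mathcal{R}_{ij}$, where the $a_{ij}$ are the bounded coefficients of \eqref{prop1-1}, depending only on the angular variable. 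Expanding the commutator by Leibniz' rule gives
$$\big[(\partial_r/r)\Delta^{-1},v\cdot\nabla\big]\rho=\sum_{i,j}a_{ij}\,[\mathcal{R}_{ij},v\cdot\nabla]\rho-\sum_{i,j}(v\cdot\nabla a_{ij})\,\mathcal{R}_{ij}\rho .$$
Since $v$ is axisymmetric without swirl, $v\cdot\nabla a_{ij}=v^r\partial_r a_{ij}+v^z\partial_z a_{ij}=0$ (each $a_{ij}$ is annihilated by $\partial_r$ and $\partial_z$), so the second sum disappears; by estimate $(1)$ of Proposition~\ref{properties} and $\|a_{ij}\|_{L^\infty}\lesssim1$ it then suffices to bound $\|[\mathcal{R}_{ij},v\cdot\nabla]\rho\|_{L^{3,1}}$ for each horizontal pair $i,j$.

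Next I would use the swirl-free form \eqref{vform}, namely $v=(v_h,v^z)$ with $v_h=(v^1,v^2)=\frac{v^r}{r}x_h$, so that $v\cdot\nabla g=\frac{v^r}{r}\,x_h\cdot\nabla_h g+v^z\partial_3 g$ for every smooth $g$. Inserting this into $[\mathcal{R}_{ij},v\cdot\nabla]\rho$ yields a ``horizontal'' part built from $\frac{v^r}{r}(x_h\cdot\nabla_h\rho)$ and its $\mathcal{R}_{ij}$-twin, and a ``vertical'' part $[\mathcal{R}_{ij},v^z]\partial_3\rho$. In the horizontal part I would write $x_h\cdot\nabla_h\rho=\nabla_h\cdot(x_h\rho)-2\rho$ and commute $x_h$ past $\mathcal{R}_{ij}$ with the identities of Lemma~\ref{ident001}, so that $\mathcal{R}_{ij}(x_h\,\cdot)=x_h\,\mathcal{R}_{ij}(\cdot)$ up to the lower-order operators $\mathcal{L}_{ij}^k$, for which \eqref{Lijkinfty}--\eqref{Lijk3} apply; since $\mathcal{R}_{ij}$ commutes with $\nabla_h\cdot$, the leading term collapses to the honest Riesz commutator $[\mathcal{R}_{ij},\tfrac{v^r}{r}]\big(\nabla_h\cdot(x_h\rho)\big)$. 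The gain is that the only velocity factor left is the scalar $v^r/r$: by \eqref{pointwise}--\eqref{convol} one has $\|v^r/r\|_{L^\infty}\lesssim\|\omega_{\theta}/r\|_{L^{3,1}}$, while $\nabla(v^r/r)$, which is of Calder\'on--Zygmund type in $\omega_\theta/r$, is controlled in $L^{3,1}$ by $\|\omega_{\theta}/r\|_{L^{3,1}}$ as well. This is exactly why the right-hand side carries $\rho x_h$ rather than $\rho$.

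Then I would apply Bony's decomposition to each remaining product, e.g. $\frac{v^r}{r}(x_h\rho)=T_{v^r/r}(x_h\rho)+T_{x_h\rho}(v^r/r)+\mathcal{R}(v^r/r,x_h\rho)$ (the last term being Bony's remainder), and similarly for the twins and the vertical piece. The commutator splits into: (i) paraproduct--Riesz commutators of the form $[\mathcal{R}_{ij},T_{v^r/r}]$ applied to (a derivative of) $x_h\rho$, which after writing $\mathcal{R}_{ij}$ in Littlewood--Paley blocks are handled by Lemmas~\ref{commu} and~\ref{lemcom} plus Bernstein's inequalities: they gain one derivative and are bounded by $\|x_h\rho\|_{B_{\infty,1}^0}$ (respectively $\|\rho\|_{B_{2,1}^{\frac12}}$ for the vertical piece) times a norm of $v^r/r$ controlled by $\|\omega_{\theta}/r\|_{L^{3,1}}$; and (ii) the terms where $v^r/r$ sits at high frequency ($T_{x_h\rho}(v^r/r)$, the remainder, and their twins, where the singular part of the kernel of $\mathcal{R}_{ij}$ cancels against the twin), which I would treat with the $L^p$--$\ell^2$ characterization \eqref{LpB}, the maximal-function bounds of Lemma~\ref{maximal}, and again \eqref{pointwise}, summing the dyadic blocks in $\ell^1$ to reproduce the factor $\|\omega_{\theta}/r\|_{L^{3,1}}$. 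The $L^2$ part of $\|\rho x_h\|$ is there to absorb the low-frequency pieces for which no derivative gain is available.

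The hard part will be that, apart from the energy estimate, the only control on $v$ is $\|v^r/r\|_{L^\infty}\lesssim\|\omega_{\theta}/r\|_{L^{3,1}}$; in particular $\nabla v$ is not controlled, so the classical Coifman--Rochberg--Weiss estimate cannot be applied directly to $[\mathcal{R}_{ij},v]\nabla\rho$, and every appearance of $v$ must be routed through $v^r/r$ and, via $\textnormal{div}\,v=0$ and the axisymmetric Biot--Savart law, through $\omega_{\theta}/r$. This is what forces the systematic transfer of the weight $x_h$ onto $\rho$ and the careful bookkeeping of the correction operators $\mathcal{L}_{ij}^k$. The genuinely delicate point is the vertical component: for $v^z$ even the pointwise control is weaker, so the contribution $[\mathcal{R}_{ij},v^z]\partial_3\rho$ must be reorganized — commuting $\partial_3$ through $\mathcal{R}_{ij}$ and using $\partial_3 v^z=-\textnormal{div}_h\big(\tfrac{v^r}{r}x_h\big)$ to re-express it in terms of $v^r/r$, its gradient, and the density moment — before the paraproduct estimates above can be brought to bear, producing the norms $\|\rho\|_{B_{2,1}^{\frac12}}$ and $\|\rho x_h\|_{L^2}$ in the right-hand side.
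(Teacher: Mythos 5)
Your overall plan shares the paper's skeleton (reduce $(\partial_r/r)\Delta^{-1}$ to Riesz transforms via Proposition~\ref{prop1}, use $v\cdot\nabla a_{ij}=0$, transfer the horizontal moment $x_h$ from the velocity onto $\rho$, Bony, and treat the vertical component by yet another identity), but the specific velocity quantity through which you ``route'' $\omega_\theta/r$ is different, and that is where the argument breaks. You factor $v_h=\tfrac{v^r}{r}\,x_h$ and then propose to commute $\mathcal{R}_{ij}$ (in its dyadic pieces, via Lemma~\ref{commu}) past $\tfrac{v^r}{r}$; to make this gain a derivative you assert that $\nabla(v^r/r)$ ``is of Calder\'on--Zygmund type in $\omega_\theta/r$'' and is controlled in $L^{3,1}$ by $\|\omega_\theta/r\|_{L^{3,1}}$. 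That claim is not justified, and I do not think it is correct as stated. Using Lemma~\ref{ident001}, $v^r/r = \partial_3\Delta^{-1}\zeta - 2(\partial_r/r)\Delta^{-1}\bigl(\partial_3\Delta^{-1}\zeta\bigr)$ with $\zeta=\omega_\theta/r$; by Proposition~\ref{prop1} the second term equals $\sum a_{ij}\mathcal{R}_{ij}\bigl(\partial_3\Delta^{-1}\zeta\bigr)$, and a horizontal derivative then produces terms $(\partial_h a_{ij})\,\mathcal{R}_{ij}\partial_3\Delta^{-1}\zeta$ with $\partial_h a_{ij}\sim 1/r$ near the axis. Each of these pieces fails to lie in $L^{3,1}(\mathbb{R}^3)$; the combined expression is pointwise finite only through a cancellation between the $a_{ij}$'s that is not made quantitative, and in any case it is \emph{not} a Calder\'on--Zygmund operator applied to $\zeta$. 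Without a substitute control on $\nabla(v^r/r)$ your paraproduct commutator $[\mathcal{R}_{ij},T_{v^r/r}]$ (treated via Lemma~\ref{commu}, which explicitly requires $\nabla f$) has no derivative gain; as I noted in the last paragraph, $v^r/r\in L^\infty\subset \mathrm{BMO}$ alone is not enough to recover the derivative that $\mathrm{div}$ or $\nabla\rho$ costs.

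The paper never differentiates $v^r/r$. It applies the axisymmetric Biot--Savart law \emph{componentwise}, writing for instance $v^1 = x_1\,\Delta^{-1}\partial_3\zeta + \mathcal{L}\zeta$ with $\mathcal{L}=-2\partial_{13}\Delta^{-2}$ (see \eqref{ident1}), and then always commutes $\mathcal{R}_{ij}$ with the genuinely nice scalars $\Delta^{-1}\partial_3\zeta$ and $\mathcal{L}\zeta$, whose gradients lie in $L^\infty$ with norm $\lesssim\|\zeta\|_{L^{3,1}}$ by \eqref{ndelta-1} and \eqref{Lijinfty}. The moment $x_1$ is transferred onto $\rho$ via $[\mathcal{R}_{ij},x_1]=\mathcal{L}_{ij}^1$ (Lemma~\ref{ident001}-(2)), and the vertical component is handled by the identity \eqref{identity5} for $v^3$, which is structurally of the same type; at no point does the operator $\partial_r/r$ act on a velocity-related quantity a second time. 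If you want to salvage your route you would have to replace the appeal to ``CZ control of $\nabla(v^r/r)$'' by precisely this componentwise Biot--Savart extraction, which effectively converts your plan into the paper's proof.
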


\begin{proof}
Since the functions $\rho$ and $v\cdot\nabla\rho$ are axisymmetric then using the identity of   Proposition \ref{prop1} we have 
$$
({\partial_r}/{r})\Delta^{-1}\rho(x)=\frac{x_2^2}{r^2}\mathcal{R}_{11}\rho(x)+\frac{x_1^2}{r^2}\mathcal{R}_{22}\rho(x)-2\frac{x_1x_2}{r^2}\mathcal{R}_{12}\rho(x):=
\sum_{i,j=1}^2a_{ij}(x)\mathcal{R}_{i,j}\rho(x)
$$
and also 
$$
({\partial_r}/{r})\Delta^{-1}(v\cdot\nabla\rho)(x)=\sum_{i,j=1}^2a_{ij}(x)\mathcal{R}_{i,j}(v\cdot\nabla\rho)(x).
$$
Since $v$ has no  swirl  and the functions $a_{i,j}$ do not depend on $r$ and $z$, we  have for every $1\le i,j\le2$
\begin{eqnarray*}
v\cdot\nabla a_{i,j}(x)&=&v^r\partial_r a_{i,j}+v^z\partial_3 a_{i,j}= 0.
\end{eqnarray*}
Consequently our  commutator can be rewritten as 
$$
\big[({\partial_r}/{r})\Delta^{-1},v\cdot\nabla\big]\rho(x)=\sum_{i,j=1}^2a_{i,j}(x)[\mathcal{R}_{ij},v\cdot\nabla]\rho\\
=\sum_{i,j=1}^2a_{i,j}(x)\textnormal{div}\{[\mathcal{R}_{ij},v]\rho\}
$$
where we have used the fact that $v$ is divergence free to get the last equality.
 By using that $a_{ij} \in L^\infty$ and the estimate (1) of Proposition \ref{properties},   we first obtain that 
 \begin{equation}
 \label{com1}
\Big\|\big[({\partial_r}/{r})\Delta^{-1},v\cdot\nabla\big]\rho\Big\|_{L^{3,1}}\le\sum_{i,j=1}^2\Big\|\textnormal{div}
 \Big([\mathcal{R}_{ij},v]\rho\Big)\Big\|_{L^{3,1}}.
\end{equation}
The terms $\partial_1\big([\mathcal{R}_{ij},v^1]\rho\big)$ and $\partial_2\big([\mathcal{R}_{ij},v^2]\rho\big)$ 
 can be  treated  in  same way and hence, we  shall prove   the estimate of the first one only.  The estimate
 of $\partial_{3}  \big([\mathcal{R}_{ij},v^1]\rho\big)$  which is easier will be done in a second step. 

$\bullet$ {\it Estimate of $\partial_1\big( [\mathcal{R}_{ij},v^1]\rho\big)$}.
 Since $v$ is divergence free,  we have that   $\Delta v= - \nabla\wedge \omega.$ 
  Hence for axisymmetric flows  (where  in particular $\omega=\omega_\theta e_\theta$), we obtain that 
\begin{eqnarray*}
v^1(x)&=&\Delta^{-1}\partial_3\omega^2
=\Delta^{-1}\partial_3(x_1({\omega_\theta}/{r})).
\end{eqnarray*}
Applying Lemma \ref{ident001}-(1) we get
\begin{equation}\label{ident1}
v^1(x)= x_1\Delta^{-1}\partial_3(\omega_\theta/r)+\mathcal{L}(\omega_\theta/r),\quad\hbox{with}\quad \mathcal{L}= - 2\partial_{13}\Delta^{-2}
\end{equation}
(we omit the subscript $ij$ for notational convenience).
Consequently the commutator can be rewritten under the form
\begin{eqnarray}\label{decompo}
\nonumber & &\partial_1\{[\mathcal{R}_{ij},v^1]\rho\}= \partial_1\Big(\big[\mathcal{R}_{i,j},\mathcal{L}(\omega_\theta/r)\big]\rho \Big)
 +   \partial_1\Big(\big[\mathcal{R}_{i,j},\big(\Delta^{-1} \partial_{3}(\omega_\theta/r) \big) x_{1}\big]\rho \Big) \\
 & &\nonumber  =   \partial_1\Big(\big[\mathcal{R}_{i,j},\mathcal{L}(\omega_\theta/r)\big]\rho \Big) + 
  \partial_1\Big(\big[\mathcal{R}_{i,j},\big(\Delta^{-1} \partial_{3}(\omega_\theta/r) \big) \big] x_{1}\rho \Big)
 +
   \partial_1\Big(\big(\Delta^{-1} \partial_{3}(\omega_\theta/r) \big) \big[ \mathcal{R}_{ij},  x_{1}\big]\rho \Big) \\ 
\nonumber& & =   \partial_1\Big(\big(\Delta^{-1} \partial_{3}(\omega_\theta/r) \big) \mathcal{L}_{ij}^1\rho \Big)
 +    \partial_1\Big(\big[\mathcal{R}_{i,j},\mathcal{L}(\omega_\theta/r)\big]\rho \Big) +
  \partial_1\Big(\big[\mathcal{R}_{i,j},\big(\Delta^{-1} \partial_{3}(\omega_\theta/r) \big) \big] x_{1}\rho \Big) \\
& & =\hbox{I}+\hbox{II}+\hbox{III}
\end{eqnarray}
where we have used the identity  (2) of Lemma \ref{ident001}.

\underline{{\it{Estimate of }}$\hbox{I}$}.
 We write
\begin{equation}\label{Eq11}
\partial_1\Big(\partial_3\Delta^{-1}(\omega_{\theta}/r)\,\mathcal{L}_{ij}^1\rho\Big)=\mathcal{R}_{13}(\omega_{\theta}/r)\,\mathcal{L}_{ij}^1\rho+\partial_3\Delta^{-1}(\omega_{\theta}/r)\,\partial_1\mathcal{L}_{ij}^1\rho.
\end{equation}
By using   (1) and (3) of   Proposition \ref{properties} and  \eqref{Lijkinfty} we have
\begin{eqnarray*}
\|\mathcal{R}_{13}(\omega_{\theta}/r)\,\mathcal{L}_{ij}^1\rho\|_{L^{3,1}}&\le&\|\mathcal{R}_{13}(\omega_{\theta}/r)\|_{L^{3,1}}\,\|\mathcal{L}_{ij}^1\rho\|_{L^{\infty}}
\le C\|\omega_{\theta}/r\|_{L^{3,1}}\|\rho\|_{L^{3,1}}
\end{eqnarray*}
and by using   Proposition \ref{properties}-(1) and \eqref{ndelta-1},  \eqref{Lijk3}, we also obtain
\begin{eqnarray*}
\|\partial_3\Delta^{-1}(\omega_{\theta}/r)\,\partial_1\mathcal{L}_{ij}^1\rho\|_{L^{3,1}}&\le&\|\partial_{3}\Delta^{-1}(\omega_{\theta}/r)\|_{L^{\infty}}\,\|\partial_1\mathcal{L}_{ij}^1\rho\|_{L^{3,1}}
\le C\|\omega_{\theta}/r\|_{L^{3,1}}\|\rho\|_{L^{3,1}}.
\end{eqnarray*}
Combining these estimates we find
\begin{equation}\label{c1}
\|\hbox{I}\|_{L^{3,1}}\le C\|\omega_{\theta}/r\|_{L^{3,1}}\|\rho\|_{L^{3,1}}.
\end{equation}
\underline{{\it{Estimate of }}$\hbox{II}$}. We will use Bony decomposition 
$$
\hbox{II}=\hbox{II}_1+\hbox{II}_2+\hbox{II}_3,
$$
with
\begin{eqnarray*}
\hbox{II}_1&=&\partial_1\sum_{q\geq 0}[\rz, S_{q-1}(\mathcal{L}(\omega_{\theta}/r))]\Delta_q\rho\\
\hbox{II}_2&=&\partial_1\sum_{q\geq 0}[\rz, \Delta_{q}(\mathcal{L}(\omega_{\theta}/r))]S_{q-1}\rho\\
\hbox{II}_3&=&\partial_1\sum_{q\geq-1}[\rz, \Delta_{q}(\mathcal{L}(\omega_{\theta}/r))]\tilde\Delta_q\rho.
\end{eqnarray*}
For the first term we easily get  that there exists a function $\psi\in \mathcal{S}(\RR^3)$ such that
$$
\hbox{II}_1=\sum_{q\geq 0}\partial_1\big\{\big[\psi_q(\hbox{D}), S_{q-1}(\mathcal{L}(\omega_{\theta}/r))\big]\Delta_q\rho\big\},
$$
with $\psi_q=2^{3q}\psi(2^q\cdot).$  By using the  Bernstein inequality, this yields 
\begin{eqnarray*}
\Big\|\partial_1\big\{\big[\psi_q(\hbox{D}), S_{q-1}(\mathcal{L}(\omega_{\theta}/r))\big]\Delta_q\rho\big\}\Big\|_{L^{2}}&\le& C2^{q}\Big\|\big[\psi_q(\hbox{D}), S_{q-1}( \mathcal{L}(\omega_{\theta}/r))\big]\Delta_q\rho\Big\|_{L^{2}}
\end{eqnarray*}
Thanks to  Lemma \ref{commu} and  \eqref{Lijinfty}, we find
\begin{eqnarray*}
\Big\|\partial_1\big\{\big[\psi_q(\hbox{D}), S_{q-1}(\mathcal{L}(\omega_{\theta}/r))\big]\Delta_q\rho\big\}\Big\|_{L^{2}}&\le& C2^q\|x\psi_q\|_{L^1}\|\nabla\mathcal{L}(\omega_\theta/r)\|_{L^\infty}\|\Delta_q\rho\|_{L^2}\\
&\le& C\|x\psi\|_{L^1}\|\omega_\theta/r\|_{L^{3,1}}\|\Delta_q\rho\|_{L^2}.
\end{eqnarray*}
It follows that
\begin{eqnarray*}
\|\hbox{II}_1\|_{B_{2,1}^{\frac12}}&\le& C\sum_{q\in\NN}2^{q\frac12}
\Big\|\partial_1\big\{\big[\psi_q(\hbox{D}), S_{q-1}(\mathcal{L}(\omega_{\theta}/r))\big]\Delta_q\rho\big\}\Big\|_{L^{2}}\le  C\|\omega_\theta/r\|_{L^{3,1}}\|\rho\|_{B_{2,1}^{\frac12}}
\end{eqnarray*}
and hence by using the embedding  $B_{2,1}^{\frac12}\hookrightarrow L^{3,1}$  (see  Proposition \ref{properties}- (4)), we obtain
\begin{eqnarray}\label{com0001}
\|\hbox{II}_1\|_{L^{3,1}}&\le& C\|\omega_\theta/r\|_{L^{3,1}}\|\rho\|_{B_{2,1}^{\frac12}}.
\end{eqnarray}

To estimate the term $\hbox{II}_2$  we do not need to detect cancellation in   the structure of the commutator,
 we just write
\begin{eqnarray*}
\hbox{II}_2&=&\sum_{q\geq 0}\partial_1\rz\Big( \Delta_{q}(\mathcal{L}(\omega_{\theta}/r))S_{q-1}\rho\Big)-\sum_{q\geq0}\partial_1\big\{ \Delta_{q}(\mathcal{L}(\omega_{\theta}/r))\rz S_{q-1}\rho\big\}.
\end{eqnarray*}
A useful remark is that  
thanks to the  Bernstein inequalities and \eqref{Lij3}, we have
\begin{equation}
\label{Lmieux}
 \|\Delta_{q} \mathcal{L}f \|_{L^p} \lesssim 2^{- 2 q} \|\nabla^2 \mathcal{L} \Delta_{q}f\|_{L^p}
  \lesssim   2^{-2 q}  \|f \|_{L^p}, \quad \forall q \geq 0, \, p \in ]1, + \infty[.
\end{equation}
 This yields  by using the  H\"{o}lder inequality and  Proposition \ref{properties}-(3) that
\begin{eqnarray*}
\|\hbox{II}_2\|_{B_{2,1}^{\frac12}}&\lesssim&\sum_{q\geq 0}2^{\frac32q}\big\| \Delta_{q}(\mathcal{L}(\omega_{\theta}/r))S_{q-1}\rho\big\|_{L^2}+\sum_{q\geq0}2^{q\frac32}\big\|\Delta_{q}(\mathcal{L}(\omega_{\theta}/r))\rz S_{q-1}\rho\|_{L^2}\\
&\lesssim&\sum_{q\geq 0}2^{q\frac32}\big\| \Delta_{q}\mathcal{L}(\omega_{\theta}/r)\|_{L^3}\big(\|S_{q-1}\rho\big\|_{L^6}+\|\rz S_{q-1}\rho\big\|_{L^6}\big)\\
&\lesssim&\|\omega_\theta/r\|_{L^{3}}\sum_{q\geq 0}2^{-q\frac12}\|S_{q-1}\rho\big\|_{L^6}\\
&\lesssim&\|\omega_\theta/r\|_{L^{3}} \sum_{q \geq 0 }\sum_{ k\le q-2}2^{\frac12(k-q)}(2^{\frac k2}\|\Delta_k\rho\|_{L^2})\\
&\lesssim&\|\omega_\theta/r\|_{L^{3}}\|\rho\|_{B_{2,1}^{\frac12}}.
\end{eqnarray*}
Hence we get from Proposition \ref{properties}-(4) that 
\begin{eqnarray}\label{com0002}
\|\hbox{II}_2\|_{L^{3,1}}&\le& C\|\omega_\theta/r\|_{L^{3,1}}\|\rho\|_{B_{2,1}^{\frac12}}.
\end{eqnarray}
For the term $\hbox{II}_3$ we write  
\begin{eqnarray*}
\hbox{II}_3&=&\partial_1\sum_{q\geq1}[\rz, \Delta_{q}(\mathcal{L}(\omega_{\theta}/r))]\tilde\Delta_q\rho
+\partial_1\sum_{-1\le q\leq0}[\rz, \Delta_{q}(\mathcal{L}(\omega_{\theta}/r))]\tilde\Delta_q\rho\\
&:=&\hbox{II}_{31}+\hbox{II}_{32}.
\end{eqnarray*}
To estimate the first term we first use the  Bernstein inequality to get
\begin{eqnarray*}
\|\Delta_k\hbox{II}_{31}\|_{L^2}&\lesssim& 2^{k}\sum_{q\geq k-4}\big\|[\rz, \Delta_{q}(\mathcal{L}(\omega_{\theta}/r))]\tilde\Delta_q\rho\big\|_{L^2}.
\end{eqnarray*}
Next, to estimate the terms  inside the sum we do not need to  use the structure of the commutator. By
 using again the H\"{o}lder inequality,  \eqref{Lmieux} and the Bernstein inequality,  we obtain
\begin{eqnarray*}
\big\|[\rz, \Delta_{q}(\mathcal{L}(\omega_{\theta}/r))]\tilde\Delta_q\rho\big\|_{L^2}
&\lesssim& \big\|\Delta_{q}(\mathcal{L}(\omega_{\theta}/r))\|_{L^3}\|\tilde\Delta_q\rho\big\|_{L^6}
+ \|\Delta_{q}(\mathcal{L}(\omega_{\theta}/r))\|_{L^3}\|\rz\tilde\Delta_q\rho\|_{L^6}\\
&\lesssim&2^{-q} \|\omega_{\theta}/r\|_{L^3}\|\tilde\Delta_q\rho\|_{L^2}.
\end{eqnarray*}
It follows  by using again Proposition \ref{properties}-(4)  that
\begin{eqnarray*}
 \|\hbox{II}_{31}\|_{L^{3, 1}}\lesssim  \|\hbox{II}_{31}\|_{B_{2,1}^{\frac12}}\lesssim\|\omega_{\theta}/r\|_{L^3} \sum_{k \geq -1}\sum_{q\geq k-4}2^{\frac32(k-q)}2^{q\frac12}\|\tilde\Delta_q\rho\|_{L^3}
\lesssim \|\omega_{\theta}/r\|_{L^3}\|\rho\|_{B_{2,1}^{\frac12}}.
\end{eqnarray*}
For the estimate of the   low frequencies term  $\hbox{II}_{32}$ we need to use  more deeply  the structure of the commutator. We first write
$$
\hbox{II}_{32}=\sum_{-1\le q\le 0}[\partial_1\rz, \Delta_{q}(\mathcal{L}(\omega_{\theta}/r))]\tilde\Delta_q\rho-\sum_{-1\le q\le 0}\partial_1\mathcal{L}\Delta_q(\omega_\theta/r)\rz\tilde\Delta_q\rho.
$$
The last term of the above identity is estimated as follows by using again Proposition \ref{properties}
-(1) and (3) and \eqref{Lijinfty}
\begin{eqnarray*}
\big\|\sum_{-1\le q\le 0}\partial_1\mathcal{L}\Delta_q(\omega_\theta/r)\rz\tilde\Delta_q\rho\|_{B_{2,1}^{\frac12}}&\lesssim&\sum_{-1\le q\le 0}\|\partial_1\mathcal{L}\Delta_q(\omega_\theta/r)\rz\tilde\Delta_q\rho\|_{L^2}\\
&\lesssim&\|\partial_1\mathcal{L}(\omega_\theta/r)\|_{L^\infty}\|\rho\|_{L^2}\\
&\lesssim&\|\omega_\theta/r\|_{L^{3,1}}\|\rho\|_{B_{2,1}^{\frac12}}.
\end{eqnarray*}
To estimate the first term of $\hbox{II}_{32}$ we write  for every $-1\le q\le 0$ thanks  to Lemma \ref{commu}
 that $$
\Big\|[\partial_1\rz, \Delta_{q}(\mathcal{L}(\omega_{\theta}/r))]\tilde\Delta_q\rho\Big\|_{L^{\frac52}}\lesssim \|xh\|_{L^{{\frac{10}{9}}}}\|\nabla\mathcal{L}(\omega_{\theta}/r)\|_{L^\infty}\|\tilde\Delta_q\rho\|_{L^2}.
$$
where $\widehat h(\xi)= \xi_1\frac{\xi_i\xi_j}{|\xi|^2}\tilde\chi(\xi)$ and $\tilde\chi\in \mathcal{D}(\RR^3).$ Using Mikhlin-H\"{o}rmander Theorem we have
$$
|h(x)|\le C(1+|x|)^{-4},\,\forall x\in\RR^3.
$$
This gives in particular   $xh\in L^{\frac{10}{9}}.$ Therefore we get by using again \eqref{Lijinfty} that 
\begin{eqnarray*}
\big\|\sum_{-1\le q\le 0}[\partial_1\rz, \Delta_{q}(\mathcal{L}(\omega_{\theta}/r))]\tilde\Delta_q\rho\big\|_{B_{\frac52,1}^{\frac15}}\lesssim \|\nabla\mathcal{L}(\omega_{\theta}/r)\|_{L^\infty}\|\rho\|_{L^2}
\lesssim\|\omega_{\theta}/r\|_{L^{3,1}}\|\rho\|_{L^2}.
\end{eqnarray*}
By using the embedding $B_{\frac52,1}^{\frac15}\hookrightarrow L^{3,1}$ which comes from Proposition 
 \ref{properties}-(4), we find that 
\begin{eqnarray*}
\|\hbox{II}_{32}\|_{L^{3,1}}&\lesssim&\|\omega_{\theta}/r\|_{L^{3,1}}\|\rho\|_{L^2}.
\end{eqnarray*}
We have thus obtained  that  the term $\hbox{II}_{3}$ enjoys the estimate
$$ \|\hbox{II}_{3}\|_{L^{3,1}}\lesssim\|\omega_{\theta}/r\|_{L^{3,1}}\|\rho\|_{L^2}. $$

Consequently,  by gathering this last estimate and the  estimates  \eqref{com0001}, \eqref{com0002}, we finally get
 that 
\begin{equation}\label{eq0076}
\|\hbox{II}\|_{L^{3,1}}\le C\|\omega_{\theta}/r\|_{L^{3,1}}\|\rho\|_{B_{2,1}^{\frac12}}.
\end{equation}
\underline{{\it{Estimate of }}$\hbox{III}$.}
We  also decompose the term $\hbox{III}$  by using Bony's formula as follows:  
$$
\hbox{III}=\hbox{III}_1+\hbox{III}_2+\hbox{III}_3,
$$
with
\begin{eqnarray*}
\hbox{III}_1&=&\partial_1\sum_{q\geq 0}[\rz, S_{q-1}(\partial_3\Delta^{-1}(\omega_{\theta}/r))]\Delta_q(x_1\rho)\\
\hbox{III}_2&=&\partial_1\sum_{q\geq 0}[\rz, \Delta_{q}(\partial_3\Delta^{-1}(\omega_{\theta}/r))]S_{q-1}(x_1\rho)\\
\hbox{III}_3&=&\partial_1\sum_{q\geq-1}[\rz, \Delta_{q}(\partial_3\Delta^{-1}(\omega_{\theta}/r))]\tilde\Delta_q(x_1\rho).
\end{eqnarray*}
As we have done  to handle the  term $\hbox{II}_1,$  we can use that  there exists a function $\psi\in \mathcal{S}(\RR^3)$ such that
$$
\hbox{III}_1=\sum_{q\geq 0}\partial_1\Big([\psi_q(\hbox{D}), S_{q-1}(\partial_3\Delta^{-1}(\omega_{\theta}/r))]\Delta_q(x_1\rho)\Big),
$$
with $\psi_q=2^{3q}\psi(2^q\cdot).$ For every $p\in]1,\infty[$, we first write thanks to  the  Bernstein inequality 
that 
$$
\Big\|\partial_1\big\{\big[\psi_q(\hbox{D}), S_{q-1}(\partial_3\Delta^{-1}(\omega_{\theta}/r))\big]\Delta_q(x_1\rho)\big\}\Big\|_{L^{p}}\le C2^{q}\Big\|\big[\psi_q(\hbox{D}), S_{q-1}(\partial_3\Delta^{-1}(\omega_{\theta}/r))\big]\Delta_q(x_1\rho)\Big\|_{L^{p}}.
$$Then by using  successively  Lemma \ref{commu} and
 the continuity of the Riesz transform (i.e. Proposition \ref{properties}-(3)),  we get
\begin{eqnarray*}
\Big\|\big[\psi_q(\hbox{D}), S_{q-1}(\partial_3\Delta^{-1}(\omega_{\theta}/r))\big]\Delta_q(x_1\rho)\Big\|_{L^{p}}&\lesssim& \|x\psi_q\|_{L^1}\|S_{q-1}(\nabla\partial_3\Delta^{-1}(\omega_{\theta}/r))\|_{L^p}\|\Delta_q(x_1\rho)\|_{L^\infty}\\&\lesssim&
2^{-q}\|x\psi\|_{L^1}\|\nabla\partial_3\Delta^{-1}(\omega_{\theta}/r))\|_{L^p}\|\Delta_q(x_1\rho)\|_{L^\infty}\\
&\lesssim&2^{-q}\|\omega_{\theta}/r\|_{L^p}\|\Delta_q(x_1\rho)\|_{L^\infty}.
\end{eqnarray*}
It follows that
\begin{equation}
\nonumber \|\hbox{III}_1\|_{L^p}
\lesssim \sum_{q\geq 0} \|\omega_{\theta}/r\|_{L^p}\|\Delta_q(x_1\rho)\|_{L^\infty} \lesssim
 \|\omega_{\theta}/r\|_{L^p}\|x_1\rho\|_{B_{\infty,1}^0}
\end{equation}
This proves that the linear operator $T$
$$f\mapsto \sum_{q\geq 0}\partial_1\big\{[\psi_q(\hbox{D}), S_{q-1}(\partial_3\Delta^{-1}f)]\Delta_q(x_1\rho)\big\}$$
is continuous from $L^p$ into itself for every $p\in]1,\infty[$ and that 
$$
\|T\|_{ \mathcal{L}(L^p)}\le C_{p}\|x_1\rho\|_{B_{\infty,1}^0}.
$$
Consequently, by using the interpolation result of  Theorem \ref{interpol}, we get that  $T$   is continuous on  $L^{p,q}$ for every $1<p<\infty$ and $q\in[1,\infty].$ In particular, this yields 
\begin{eqnarray}\label{com002}
\|\hbox{III}_1\|_{L^{3,1}}
&\lesssim&\|\omega_{\theta}/r\|_{L^{3,1}}\|x_1\rho\|_{B_{\infty,1}^0}
\end{eqnarray}
For the term $\hbox{III}_3$,  we use split it into  
\begin{eqnarray*}
\hbox{III}_3&=&\partial_1\sum_{q\geq1}[\rz, \Delta_{q}(\partial_3\Delta^{-1}(\omega_{\theta}/r))]\tilde\Delta_q(x_1\rho)
+\partial_1\sum_{-1\le q\leq0}[\rz, \Delta_{q}(\partial_3\Delta^{-1}(\omega_{\theta}/r))]\tilde\Delta_q(x_1\rho)\\
&:=&\hbox{III}_{31}+\hbox{III}_{32}.
\end{eqnarray*}
Let $p\in]1,\infty[$  from the  Bernstein inequality, we have that 
\begin{eqnarray*}
\|\Delta_k\hbox{III}_{31}\|_{L^p}&\lesssim& 2^{k}\sum_{q\geq k-4}\big\|[\rz, \Delta_{q}(\partial_3\Delta^{-1}(\omega_{\theta}/r))]\tilde\Delta_q(x_1\rho)\big\|_{L^p}
\end{eqnarray*}
 and  the terms inside the sum  can be controlled without using  the structure of the commutator.
  We just  write
\begin{eqnarray*}
\big\|[\rz, \Delta_{q}(\partial_3\Delta^{-1}(\omega_{\theta}/r))]\tilde\Delta_q(x_1\rho)\big\|_{L^p}
&\lesssim& \big\|\Delta_{q}(\partial_3\Delta^{-1}(\omega_{\theta}/r))\|_{L^p}\|\tilde\Delta_q(x_1\rho)\big\|_{L^\infty}\\
&+& \|\Delta_{q}(\partial_3\Delta^{-1}(\omega_{\theta}/r))\|_{L^p}\|\rz\tilde\Delta_q(x_1\rho)\|_{L^\infty}\\
&\lesssim&2^{-q} \|\omega_{\theta}/r\|_{L^p}\|\tilde\Delta_q(x_1\rho)\|_{L^\infty}.
\end{eqnarray*}
Note that we have used the Bernstein inequality, the continuity of the Riesz transform on $L^p$
 and the fact that the support of the Fourier transform of $\tilde{\Delta}_{q}$ does not contains zero
   which gives that the operator $\mathcal{R}_{ij}\Delta_{q}$ also acts continuouly on $L^\infty$
    (since it  can be written as the convolution with an $L^1$ fonction).
It follows that for every $p \in ]1, + \infty[$, we have
\begin{eqnarray*}
\|\hbox{III}_{31}\|_{L^p}&\lesssim&\|\omega_{\theta}/r\|_{L^p} \sum_{k\geq -1}\sum_{q\geq k-4}2^{k-q}\|\tilde\Delta_q(x_1\rho)\|_{L^\infty}
\lesssim \|\omega_{\theta}/r\|_{L^p}\|x_1\rho\|_{B_{\infty,1}^0}.
\end{eqnarray*}
By using again the interpolation result of  Theorem \ref{interpol}, this yields
\begin{eqnarray*}
\|\hbox{III}_{31}\|_{L^{3,1}}&\lesssim&\|\omega_{\theta}/r\|_{L^{3,1}}\|x_1\rho\|_{B_{\infty,1}^0}.
\end{eqnarray*}
We can also  estimate  the term $\hbox{III}_{32}$  without using the structure of the commutator.
 By using  the continuity of the Riesz transform on $L^2$ and \eqref{ndelta-1}, we obtain
\begin{eqnarray*}
\big\|[\rz, \Delta_{q}(\partial_3\Delta^{-1}(\omega_{\theta}/r))]\tilde\Delta_q(x_1\rho)\big\|_{L^2}
&\lesssim& \big\|\Delta_{q}(\partial_3\Delta^{-1}(\omega_{\theta}/r))\|_{L^\infty}\|\tilde\Delta_q(x_1\rho)\big\|_{L^2}\\
&+& \|\Delta_{q}(\partial_3\Delta^{-1}(\omega_{\theta}/r))\|_{L^\infty}\|\rz\tilde\Delta_q(x_1\rho)\|_{L^2}\\
&\lesssim& \|\partial_3\Delta^{-1}(\omega_{\theta}/r)\|_{L^\infty}\|\tilde\Delta_q(x_1\rho)\|_{L^2}\\
&\lesssim& \|\omega_{\theta}/r\|_{L^{3,1}}\|x_1\rho\|_{L^2}.
\end{eqnarray*}
Therefore we get
\begin{eqnarray*}
\|\hbox{III}_{32}\|_{B_{2,1}^{\frac12}}&\lesssim&\|\omega_{\theta}/r\|_{L^{3,1}}\|x_1\rho\|_{L^2}.
\end{eqnarray*}
Consequently we obtain
\begin{equation}\label{com001}
\|\hbox{III}_3\|_{B_{3,1}^0}\lesssim  \|\omega_{\theta}/r\|_{L^{3,1}}\|x_1\rho\|_{B_{\infty,1}^0\cap L^2}.
\end{equation}
Let us now turn  to the estimate of the term  $\hbox{III}_2.$ We write
\begin{eqnarray*}
\hbox{III}_2&=&\sum_{q\geq 0}  \big[ \mathcal{R}_{ij},  \Delta_{q}(\partial_{13}\Delta^{-1}(\omega_{\theta}/r) \big]
 S_{q-1}(x_{1} \rho)+ \big[ \mathcal{R}_{ij},   \Delta_{q}(\partial_{3}\Delta^{-1}(\omega_{\theta}/r) \big]
  \partial_{1} S_{q-1}(x_{1} \rho) \\
&=&\hbox{III}_{21}+\hbox{III}_{22}.
\end{eqnarray*}
We have by definition of the paraproducts that 
\begin{eqnarray*}
\hbox{III}_{21}=\rz(T_{x_1\rho}\mathcal{R}_{13}(\omega_{\theta}/r))-T_{\rz(x_1\rho)}\mathcal{R}_{13}(\omega_{\theta}/r).
\end{eqnarray*}
Thanks to  Proposition \ref{properties},  we get that 
\begin{eqnarray*}
\|\hbox{III}_{21}\|_{L^{3,1}}&\lesssim& \|\mathcal{R}_{13}(\omega_{\theta}/r)\|_{L^{3,1}}\big(\|x_1\rho\|_{L^\infty}+\|\mathcal{R}_{13}(x_1\rho)\|_{L^\infty}\big)\\
&\lesssim& \|\omega_{\theta}/r\|_{L^{3,1}}\big(\|x_1\rho\|_{L^\infty}+\|\mathcal{R}_{13}(x_1\rho)\|_{L^\infty}\big)\\
&\lesssim& \|\omega_{\theta}/r\|_{L^{{3,1}}}\|x_1\rho\|_{B_{\infty,1}^0\cap L^2}.
\end{eqnarray*}
Note that the $L^2$ norm in the right hand-side comes from the low frequency term
 in the Littlewood-Paley  decomposition: we have  
\begin{equation}
\label{remlow} \|\mathcal{R}_{13}  \Delta_{-1}\big(x_{1} \rho) \|_{L^\infty} \lesssim   \|\mathcal{R}_{13}  \Delta_{-1}\big(x_{1} \rho) \|_{L^2} \lesssim   \|  x_{1} \rho \|_{L^2}\end{equation}
thanks to the Berntein inequality and the $L^2$ continuity of the Riesz transform.

For the estimate of  $\hbox{III}_{22}$, we shall use that thanks to the Bernstein inequality,
 we have   for every $f$ that, 
$$ \|\Delta_{q} \partial_{3} \Delta^{-1} f \|_{L^p} \lesssim  2^{-q} \|f\|_{L^p}, \quad \forall q \geq 0, 
 \, p \in ]1, + \infty[.$$
This yields
\begin{eqnarray*}
\|\hbox{III}_{22}\|_{L^p}&\lesssim&  \|\omega_{\theta}/r\|_{L^{p}}\sum_{q\geq 0}2^{-q} \big(\|\partial_1S_{q-1}(x_1\rho)\|_{L^\infty}+\|\partial_1 S_{q-1}\rz(x_1\rho)\|_{L^\infty}\big)\\
 &\lesssim&\|\omega_{\theta}/r\|_{L^{p}} \sum_{q \geq 0 }\sum_{q-2\geq p\geq -1}2^{p-q} \big(\|\Delta_p(x_1\rho)\|_{L^\infty}+\|\Delta_p\rz(x_1\rho)\|_{L^\infty}\big)\\
 &\lesssim&\|\omega_{\theta}/r\|_{L^{p}}\big(\|x_1\rho\|_{B_{\infty,1}^0}+\|\rz(x_1\rho)\|_{B_{\infty,1}^0}\big)\\
 &\lesssim&\|\omega_{\theta}/r\|_{L^{p}}\|x_1\rho\|_{B_{\infty,1}^0\cap L^2}
\end{eqnarray*}
by using again \eqref{remlow}. Consequently, 
by interpolation, we also find
$$
\|\hbox{III}_{22}\|_{L^{3,1}}\lesssim\|\omega_{\theta}/r\|_{L^{3,1}}\|x_1\rho\|_{B_{\infty,1}^0\cap L^2}.
$$
We have thus shown that 
\begin{equation}\label{com00}
\|\hbox{III}_2\|_{L^{3,1}}\lesssim \|\omega_{\theta}/r\|_{L^{3,1}}\|x_1\rho\|_{B_{\infty,1}^0\cap L^2}.
\end{equation}
Gathering  \eqref{com002}, \eqref{com001} and \eqref{com00} , we obtain
\begin{equation}\label{com003}
\|\hbox{III}\|_{L^{3,1}}\lesssim \|\omega_{\theta}/r\|_{L^{3,1}}\|x_1\rho\|_{B_{\infty,1}^0\cap L^2}.
\end{equation}
Finally we obtain
$$
\big\|\partial_1\{[\rz, v^1]\rho\}\big\|_{L^{3,1}}\lesssim \|\omega_{\theta}/r\|_{L^{3,1}}\big(\|x_1\rho\|_{B_{\infty,1}^0\cap L^2}+\|\rho\|_{B_{2,1}^{\frac12}}\big)
$$
thanks to \eqref{com003}, \eqref{eq0076}, \eqref{c1} and \eqref{decompo}.
In the same way, we also obtain the  estimate
$$
\big\|\partial_2\{[\rz, v^2]\rho\}\big\|_{L^{3,1}}\lesssim \|\omega_{\theta}/r\|_{L^{3,1}}\big(\|x_2\rho\|_{B_{\infty,1}^0\cap L^2}+\|\rho\|_{B_{2,1}^{\frac12}}\big).
$$
In view of \eqref{com1}, it remains to estimate the term $\partial_3 \big( [\rz, v^3]\rho\big)$  which has a different structure. 
$\bullet$ {\it Estimate of $\partial_3\big( [\rz, v^3]\rho\big).$} Since we can write that  
\begin{eqnarray*}
\Delta v^3= - (\mbox{curl }\omega)_{3}= - \big( \partial_{r} \omega_{\theta}+ {\omega_{\theta_{r}}}\big)
= -  
\big(r\partial_r({\omega_{\theta}\over r})+2{\omega_{\theta}\over r } \big)
= -  x_h\cdot\nabla_h({\omega_{\theta}\over r}) - 2{\omega_{\theta}\over r}, 
\end{eqnarray*}
we obtain that 
\begin{eqnarray*}
 v^3(x)&=& -  \Delta^{-1}\big(x_h\cdot\nabla_h({\omega_{\theta} \over r})\big) - 2\Delta^{-1}({\omega_{\theta} \over r}).
\end{eqnarray*}
and hence by using  Lemma  \ref{ident001}  that 
\begin{eqnarray}\label{identity5}
\nonumber -  v^3(x)&=& x_h\cdot\nabla_h\Delta^{-1}(\omega_{\theta}/r)-2\sum_{i=1}^2\Delta^{-1}\mathcal{R}_{ii} (\omega_{\theta}/r)+2\Delta^{-1}(\omega_{\theta}/r)\\
&=&x_h\cdot\nabla_h\Delta^{-1}(\omega_{\theta}/r)+2\Delta^{-1}\mathcal{R}_{33}(\omega_{\theta}/r).
\end{eqnarray}
Thus, we have a decomposition of  the commutator  under the  form
\begin{eqnarray*}
 - \partial_3\Big([\rz, v^3]\rho\Big)
&= &\sum_{k=1}^2\partial_3\Big(\partial_k\Delta^{-1}(\omega_{\theta}/r)\big[\rz,x_k\big]\rho\Big)
+2\partial_3\Big(\big[\rz, \Delta^{-1}\mathcal{R}_{33}(\omega_{\theta}/r)\big]\rho\Big)\\
&& + \sum_{k=1}^2\partial_3\Big(\big[\rz, \partial_k\Delta^{-1}(\omega_{\theta}/r)\big](x_k\rho)\Big)\\
&=&\overline{\hbox{I}}+\overline{\hbox{II}}+\overline{\hbox{III}} .
\end{eqnarray*}
To estimate the first  term  $\overline{\hbox{I}}$, we use  Lemma \ref{ident001}-(2)  to obtain that 
\begin{eqnarray*}
\partial_3\Big(\partial_k\Delta^{-1}( {\omega_{\theta} \over r})\big[\rz,x_k\big]\rho\Big)&=&\partial_3\Big(\partial_k\Delta^{-1}({\omega_{\theta} \over r})\mathcal{L}_{ij}^k\rho\Big)
=\mathcal{R}_{3k}({\omega_{\theta} \over r})\,\mathcal{L}_{ij}^k\rho+\partial_k\Delta^{-1}( {\omega_{\theta}
 \over r})\partial_3\mathcal{L}_{ij}^k\rho.
\end{eqnarray*}
 It follows that 
\begin{eqnarray*}
\|\overline{\hbox{I}}\|_{L^{3,1}}&\le&\sum_{k=1}^2\Big(\|\mathcal{L}_{ij}^k\rho\|_{L^\infty}\|\mathcal{R}_{3k}(\omega_{\theta}/r)\|_{L^{3,1}}+\|\partial_k\Delta^{-1}(\omega_{\theta}/r)\|_{L^\infty}\|\partial_3\mathcal{L}_{ij}^k\rho\|_{L^{3,1}}\Big)\\
&\lesssim&\|\rho\|_{L^{3,1}}\|\omega_{\theta}/r\|_{L^{3,1}}
\end{eqnarray*}
 thanks to \eqref{Lijkinfty}, \eqref{Lijk3}.
The estimates of the  terms $\overline{\hbox{II}}$ and $\overline{\hbox{III}}$   are similar to the ones  
 of   ${\hbox{II}}$ and $\hbox{III}$ in   \eqref{decompo}
  (indeed,  the operator $ \Delta^{-1}\mathcal{R}_{33}= \partial_{33}\Delta^{-2}$ has the same properties
   as $\mathcal{L}= -2 \partial_{13} \Delta^{-2}$ which arises in \eqref{decompo}) consequently, 
    we also get  
   as  in   \eqref{eq0076} and \eqref{com003} that 
$$
\|\overline{\hbox{I}}\|_{L^{3,1}}\le\|\omega_{\theta}/r\|_{L^3}\|\rho\|_{B_{2,1}^{\frac12}}, \quad 
\|\overline{\hbox{II}}\|_{L^{3,1}}\lesssim\|\omega_{\theta}/r\|_{L^{3,1}}\|\rho x_h\|_{B_{\infty,1}^0\cap L^2}.
$$
Consequently, we also find that 
$$
\big\|\partial_3\big([\rz, v^3]\rho \big)\big\|_{L^{3,1}}\lesssim \|\omega_{\theta}/r\|_{L^{3,1}}\big(\|\rho x_h\|_{B_{\infty,1}^0\cap L^2}+\|\rho\|_{B_{2,1}^{\frac12}}\big).
$$
This ends the proof of  Theorem  \ref{prop2}.
\end{proof}
\subsection{Commutation between the advection operator  and  $\Delta_q$}
\label{subsec2}
 The last commutator estimate which is needed in the proof of our main result is the following.
\begin{prop}\label{propcomm}
Let  $v$ be an axisymmetric  divergence free vector field  without swirl   and $\rho$ a smooth scalar function. Then   there exists $C>0$ such that for  every $q\in\NN\cup\{-1\}$ we have
$$
\big\|\big[\Delta_q,v\cdot\nabla\big]\rho\big\|_{L^{2}}\le C\|\omega_{\theta}/r\|_{L^{3,1}}\big(\|\rho\, x_h\|_{L^{6}}+\|\rho\|_{L^2}\big).
$$

\end{prop}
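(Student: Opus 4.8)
The plan is to follow the strategy of the proof of Theorem \ref{prop2}: the gradient of $v$ is \emph{not} controlled by $\|\omega_\theta/r\|_{L^{3,1}}$ (the Biot--Savart law only gains one derivative, so $\nabla v\sim\nabla^2\Delta^{-1}(\omega_\theta/r)$, which is not bounded), hence the classical commutator estimate $\|[\Delta_q,v\cdot\nabla]\rho\|_{L^2}\lesssim\|\nabla v\|_{L^\infty}\|\rho\|_{L^2}$ is useless here. One must instead isolate the part of $v$ responsible for this and transfer the unbounded weight $x_h$ from the velocity coefficient onto $\rho$, which is exactly what forces the weighted norm $\|\rho\,x_h\|_{L^6}$ on the right-hand side.

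First I would use the Biot--Savart law and Lemma \ref{ident001}, exactly as in \eqref{ident1} and \eqref{identity5}, to split $v=v_a+v_b$ where, writing $\zeta=\omega_\theta/r$, $P=\Delta^{-1}\zeta$ and $g=\partial_3 P$,
$$ v_b=\bigl(x_1 g,\ x_2 g,\ -x_1\partial_1 P-x_2\partial_2 P\bigr),\qquad v_a=\bigl(\mathcal L_{13}\zeta,\ \mathcal L_{23}\zeta,\ \mathcal L_{33}\zeta\bigr),$$
with $\mathcal L_{ij}=-2\partial_i\partial_j\Delta^{-2}$. The point is that $\|\nabla v_a\|_{L^\infty}\lesssim\|\zeta\|_{L^{3,1}}$ by \eqref{Lijinfty}, while every coefficient of $v_b$ has the form $x_j\gamma$ with $\gamma\in\{g,\partial_1P,\partial_2P\}$, and these satisfy $\|\gamma\|_{L^\infty}\lesssim\|\zeta\|_{L^{3,1}}$ by \eqref{ndelta-1} and $\|\nabla\gamma\|_{L^3}\lesssim\|\zeta\|_{L^3}\lesssim\|\zeta\|_{L^{3,1}}$, since each component of $\nabla\gamma$ is a Riesz transform of $\zeta$.

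The $v_a$ contribution is the easy one: writing $[\Delta_q,v_a\cdot\nabla]\rho=\operatorname{div}\bigl([\Delta_q,v_a]\rho\bigr)-[\Delta_q,\operatorname{div}v_a]\rho$ and noting $\|\operatorname{div}v_a\|_{L^\infty}\lesssim\|\zeta\|_{L^{3,1}}$ (again by \eqref{Lijinfty}, or since $\operatorname{div}v_a=-2g$), Lemma \ref{lemcom} applied with exponents $(p,m,\rho)=(2,2,\infty)$ together with a crude H\"older bound for the second term gives $\|[\Delta_q,v_a\cdot\nabla]\rho\|_{L^2}\lesssim\|\zeta\|_{L^{3,1}}\|\rho\|_{L^2}$, uniformly in $q$. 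For $v_b$ the key algebraic manipulation is $x_j\partial_k\mu=\partial_k(x_j\mu)-\delta_{jk}\mu$, applied both with $\mu=\rho$ and with $\mu=\Delta_q\rho$; it turns each of the four pieces $[\Delta_q,x_j\gamma\,\partial_k]\rho$ into
$$ [\Delta_q,\gamma\,\partial_k](x_j\rho)\ +\ \gamma\,\partial_k\bigl([\Delta_q,x_j]\rho\bigr)\ -\ \delta_{jk}[\Delta_q,\gamma]\rho .$$
For the last two terms one only needs $\gamma\in L^\infty$ and the elementary fact that $[\Delta_q,x_j]\rho$ is frequency localized with a gain $2^{-q}$, so that $\|\partial_k([\Delta_q,x_j]\rho)\|_{L^2}\lesssim\|\rho\|_{L^2}$; hence they contribute $\lesssim\|\zeta\|_{L^{3,1}}\|\rho\|_{L^2}$. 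For the main term I would write $[\Delta_q,\gamma\,\partial_k]F=\partial_k\bigl([\Delta_q,\gamma]F\bigr)-[\Delta_q,\partial_k\gamma]F$ with $F=x_j\rho$: Lemma \ref{lemcom} with exponents $(p,m,\rho)=(2,6,3)$ bounds $\|\partial_k([\Delta_q,\gamma]F)\|_{L^2}\lesssim\|\nabla\gamma\|_{L^3}\|F\|_{L^6}$, and the uniform $L^6$-boundedness of $\Delta_q$ with H\"older bounds $\|[\Delta_q,\partial_k\gamma]F\|_{L^2}\lesssim\|\partial_k\gamma\|_{L^3}\|F\|_{L^6}$. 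Since $\|F\|_{L^6}=\|x_j\rho\|_{L^6}\le\|\rho\,x_h\|_{L^6}$ and $\|\nabla\gamma\|_{L^3}\lesssim\|\zeta\|_{L^{3,1}}$, summing the four pieces gives $\|[\Delta_q,v_b\cdot\nabla]\rho\|_{L^2}\lesssim\|\zeta\|_{L^{3,1}}\bigl(\|\rho\,x_h\|_{L^6}+\|\rho\|_{L^2}\bigr)$. Adding the two contributions proves the estimate; the case $q=-1$ needs no change, since the commutator lemmas hold uniformly over $q\in\NN\cup\{-1\}$ and $[\Delta_{-1},x_j]$ followed by one derivative is bounded on $L^2$.

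The step I expect to be the crux is the very first observation — that $\nabla v\notin L^\infty$ under the sole bound $\zeta\in L^{3,1}$ — which dictates the whole architecture: one peels off the piece $x_h\cdot(\text{bounded, with }L^3\text{ gradient})$ of $v$, transfers $x_h$ onto $\rho$, and then exploits that an $L^3$ gradient paired with $\rho\,x_h\in L^6$ lands exactly in $L^2$. The balance $\tfrac13+\tfrac16=\tfrac12$, forced by the fact that $\zeta\in L^{3,1}$ gives $\nabla\Delta^{-1}\zeta\in L^\infty$ but only $\nabla^2\Delta^{-1}\zeta\in L^3$, is precisely what produces the weighted norm $\|\rho\,x_h\|_{L^6}$ in the right-hand side.
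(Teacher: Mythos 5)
Your argument is correct and, modulo bookkeeping, is the paper's own proof. The paper first uses $\operatorname{div}v=0$ to put the whole commutator in divergence form, $[\Delta_q,v\cdot\nabla]\rho=\sum_i\partial_i\bigl([\Delta_q,v^i]\rho\bigr)$, and only then decomposes each $v^i$ via \eqref{ident1} and \eqref{identity5}; you instead split $v=v_a+v_b$ first and have to carry the extra term $[\Delta_q,\operatorname{div}v_a]\rho$ (with $\operatorname{div}v_a=-2g$), which you treat correctly — this is an inessential reorganization. Everything else agrees line for line: the Biot--Savart/Lemma \ref{ident001} decomposition isolating the unbounded coefficient $x_j\gamma$, the identity $[\Delta_q,x_j\gamma\partial_k]\rho=[\Delta_q,\gamma\partial_k](x_j\rho)+\gamma\partial_k([\Delta_q,x_j]\rho)-\delta_{jk}[\Delta_q,\gamma]\rho$ (the paper instead uses $[\Delta_q,x_j\gamma]\rho=[\Delta_q,\gamma](x_j\rho)+\gamma[\Delta_q,x_j]\rho$ and then applies $\partial_k$ from outside, which is the same calculation), the explicit formula \eqref{conv12} for $[\Delta_q,x_j]$, Lemma \ref{lemcom} with the $\frac13+\frac16=\frac12$ balance, and the bounds $\|\gamma\|_{L^\infty}\lesssim\|\zeta\|_{L^{3,1}}$ via \eqref{ndelta-1} and $\|\nabla\gamma\|_{L^3}\lesssim\|\zeta\|_{L^3}$ via Riesz. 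Your opening diagnosis of why $\|\nabla v\|_{L^\infty}$ is unavailable, and why that forces the weight transfer onto $\rho$, is exactly the motivating observation behind the paper's commutator strategy.
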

\begin{proof}
From the incompressibility of the velocity we have
\begin{eqnarray}
\label{equa0}
\big[\Delta_q,v\cdot\nabla\big]\rho&=&\sum_{i=1}^3\partial_i\Big(\big[\Delta_q,v^i\big]\rho\Big)
=\hbox{I}+\hbox{II}+\hbox{III}.
\end{eqnarray}
The first and the second terms  can  be  handled in  the same way, so we shall only detail the proof of the  estimate  of the first one.  Thanks  to \eqref{ident1},  we have that 
\begin{equation*}
v^1(x)=x_1\Delta^{-1}\partial_3(\omega_{\theta}/r)+\mathcal{L}(\omega_{\theta}/r),\quad\hbox{with}\quad \mathcal{L}=-2\mathcal{R}_{13}\Delta^{-1}
\end{equation*}
and hence we get
\begin{eqnarray*}
\hbox{I}&=&\partial_1\Big(\Big[\Delta_q,x_1\Delta^{-1}\partial_3(\omega_{\theta}/r)\Big]\rho\Big)+\partial_1\Big(\Big[\Delta_q,\mathcal{L}(\omega_{\theta}/r)\Big]\rho\Big)\\
&=&\hbox{I}_1+\hbox{I}_2.
\end{eqnarray*}
The  estimate of  the second term in  the right-hand side  is again a direct consequence of  Lemma \ref{lemcom}
 and \eqref{Lijinfty}. Indeed, we write 
 \begin{eqnarray*}
\|\hbox{I}_2\|_{L^2}&\lesssim \big\|\nabla \mathcal{L}(\omega_{\theta}/r)\big\|_{L^\infty}\|\rho\|_{L^2}
\lesssim \big\|\omega_{\theta}/r\big\|_{L^{3,1}}\|\rho\|_{L^2}.
\end{eqnarray*}
The first term $I_{1}$ in  the right-hand side can be expanded  under the form
\begin{eqnarray*}
\hbox{I}_1&=&\partial_1\Big(\Big[\Delta_q,\Delta^{-1}\partial_3(\omega_{\theta}/r)\Big](x_1\rho)\Big)+\partial_1\Big(\Delta^{-1}\partial_3(\omega_{\theta}/r)\big[\Delta_q, x_1\big]\rho\Big)
= \hbox{I}_{11}+\hbox{I}_{12}.
\end{eqnarray*}
We start with the estimate of $\hbox{I}_{12}.$  By definition of $\Delta_{q}$, we have
\begin{eqnarray*}
x_1\Delta_q\rho&=&x_12^{3q}\int_{\RR^3}\varphi(2^q(x-y))\rho(y) dy\\
&=&2^{3q}\int_{\RR^3}\varphi(2^q(x-y))y_1\rho(y) dy+2^{3q}\int_{\RR^3}\varphi(2^q(x-y))(x_1-y_1)\rho(y) dy\\
&=&\Delta_q(x_1\rho)+2^{-q}2^{3q}\,\varphi_1(2^{q}\cdot)\star\rho,
\end{eqnarray*}
where $\varphi_1(x)=x_1\varphi(x)\in \mathcal{S}(\RR^3).$ Consequently we get
 the expression of the commutator: 
\begin{equation}\label{conv12}
\big[\Delta_q, x_1\big]\rho=-2^{-q}2^{3q}\,\varphi_1(2^{q}\cdot)\star\rho.
\end{equation}
This yields
\begin{eqnarray*}
\hbox{I}_{12}&=&-\big(\mathcal{R}_{13}(\omega_{\theta}/r)\big) 2^{2q}\,\varphi_1(2^{q}\cdot)\star\rho-\big\{\Delta^{-1}\partial_3(\omega_{\theta}/r)\big\}2^{3q}\,(\partial_1\varphi_1)(2^{q}\cdot)\star\rho.
\end{eqnarray*}
Therefore we get by using again the  H\"{o}lder inequality, the continuity of the Riesz transform, 
 \eqref{ndelta-1} and the  Young inequality for convolutions that: 
 \begin{eqnarray*}
\big\|\hbox{I}_{12}\|_{L^2}&\le&\big\|\mathcal{R}_{13}(\omega_{\theta}/r)\big\|_{L^3}2^{2q}\big\|\varphi_1(2^{q}\cdot)\star\rho\big\|_{L^{6}}
+\big\|\Delta^{-1}\partial_3(\omega_{\theta}/r)\big\|_{L^\infty}2^{3q}\big\|(\partial_1\varphi_1)(2^{q}\cdot)\star\rho\big\|_{L^2}\\
&\lesssim&\|\omega_{\theta}/r\|_{L^3}\|\varphi_1\|_{L^{\frac32}}\|\rho\|_{L^2}+\|\omega_{\theta}/r\|_{L^{3,1}}\|\partial_1\varphi_1\|_{L^1}\|\rho\|_{L^2}\\
&\lesssim&\|\omega_{\theta}/r\|_{L^{3,1}}\|\rho\|_{L^2}.
\end{eqnarray*}
Note that we have also used the embedding \eqref{propLp1}.

To estimate $\hbox{I}_{11}$ we use again  Lemma \ref{lemcom}: 
\begin{eqnarray*}
\|\hbox{I}_{11}\|_{L^2}&\lesssim&\|\nabla\Delta^{-1}\partial_3(\omega_{\theta}/r)\|_{L^3}\|x_1\rho\|_{L^{6}}
\lesssim\|\omega_{\theta}/r\|_{L^3}\|x_1\rho\|_{L^{6}}
\lesssim \|\omega_{\theta}/r\|_{L^{3,1}}\|x_1\rho\|_{L^{6}}.
 \end{eqnarray*}

We have thus shown that 
\begin{equation}\label{equa1}
\|\hbox{I}\|_{L^2}\lesssim \|\omega_{\theta}/r\|_{L^{3,1}}\big(\|\rho\|_{L^p}+\|x_1\rho\|_{L^{6}}\big).
\end{equation}
In the same way, we obtain that 
\begin{equation}\label{equa2}
\|\hbox{II}\|_{L^2}\lesssim \|\omega_{\theta}/r\|_{L^{3,1}}\big(\|\rho\|_{L^2}+\|x_2\rho\|_{L^{6}}\big).
\end{equation}
It remains to estimate the last term $\hbox{III}.$  By using  \eqref{identity5},  we get
\begin{eqnarray*}
- \hbox{III}&=&\partial_3\Big\{\Big[\Delta_q,\nabla_h\Delta^{-1}(\omega_{\theta}/r)\Big](x_h\rho)\Big\}+\partial_3\Big\{\nabla_h\Delta^{-1}(\omega_{\theta}/r)\big[\Delta_q,x_h\big]\rho\Big\} \\ 
 & + &  2\partial_3\Big\{\Big[\Delta_q,\Delta^{-1}\mathcal{R}_{33}(\omega_{\theta}/r)\Big]\rho\Big\}\\
 &=&\hbox{III}_1+\hbox{III}_2+\hbox{III}_3.
\end{eqnarray*}
The estimates of the  first  and last terms  follow again  from  Lemma \ref{lemcom}: we write that
\begin{eqnarray*}
\|\hbox{III}_1\|_{L^2}&\le& C\|\nabla^2\Delta^{-1}(\omega_{\theta}/r)\|_{L^3}\|x_h\rho\|_{L^{6}}
\lesssim  \|\omega_{\theta}/r\|_{L^{3}}\|x_h\rho\|_{L^{6}}
\lesssim \|\omega_{\theta}/r\|_{L^{3,1}}\|x_h\rho\|_{L^{6}}
\end{eqnarray*}
and that 
\begin{eqnarray*}
\|\hbox{III}_3\|_{L^2}&\le& C\|\nabla\Delta^{-1}\mathcal{R}_{33}(\omega_{\theta}/r)\|_{L^\infty}\|\rho\|_{L^2}
\lesssim \|\mathcal{R}_{33}(\omega_{\theta}/r)\|_{L^{3,1}}\|\rho\|_{L^2}
\lesssim C\|\omega_{\theta}/r\|_{L^{3,1}}\|\rho\|_{L^2}.
\end{eqnarray*}
Note that we have used again the estimate \eqref{ndelta-1}.

Finally, 
to estimate the second term $\hbox{III}_2$ we  can use the expression of the commutator
 $[\Delta_{q}, x_{h}]$ given by  \eqref{conv12}: 
\begin{eqnarray*}
\hbox{III}_2&=&2^{-q}\partial_3\Big(\big(\nabla_h\Delta^{-1}(\omega_{\theta}/r)\big) \, 2^{3q}\varphi_h(2^{q}\cdot)\star \rho \Big)\\
&=&2^{-q}\big(\partial_3\nabla_h\Delta^{-1}(\omega_{\theta}/r)\big) \,\big( 2^{3q}\varphi_h(2^{q}\cdot)\star \rho \big)+\nabla_h\Delta^{-1}(\omega_{\theta}/r) \, \big(2^{3q}(\partial_3\varphi_h)(2^{q}\cdot)\star \rho \big),
\end{eqnarray*}
with $\varphi_h(x)=-x_h\varphi(x).$
It follows as before  that
\begin{eqnarray*}
\|\hbox{III}_2\|_{L^2}&\lesssim&2^{-q}\|\omega_{\theta}/r\|_{L^3} 2^{3q}\|\varphi_h(2^{q}\cdot)\star \rho\|_{L^{6}}
+\|\nabla_h\Delta^{-1}(\omega_{\theta}/r)\|_{L^\infty}  2^{3q}\|(\partial_3\varphi_h)(2^{q}\cdot)\star \rho \|_{L^2}\\
&\lesssim&\|\omega_{\theta}/r\|_{L^3} \|\varphi_h\|_{L^{\frac32}}\| \rho\|_{L^2}
+\|\omega_{\theta}/r\|_{L^{3,1}}  \|\partial_3\varphi_h\|_{L^1}\| \rho \|_{L^2}\\
&\lesssim&\|\omega_{\theta}/r\|_{L^{3,1}} \| \rho \|_{L^2}.
\end{eqnarray*}
Gathering  these estimates we also  find that 
$$
\|\hbox{III}\|_{L^3}\lesssim \|\omega_{\theta}/r\|_{L^{3,1}}\big( \| \rho \|_{L^p}+\|x_h\rho\|_{L^{6}}\big).
$$
In view of \eqref{equa0}, \eqref{equa1}, \eqref{equa2} and the last estimate, this ends
 the proof of Proposition \ref{propcomm}.
\end{proof}

\section{A priori estimates}
\label{sectionapriori}
In this section  we intend to  establish the global  a priori estimates needed for the proof of \mbox{Theorem \ref{thm1}.}   We shall  first  prove  some basic  weak  estimates that can be  obtained easily  through energy
 type  estimates.   In  a second step, we shall prove  
 the control of some stronger norms such as $\|\omega(t)\|_{L^\infty}$ and $\|\nabla v(t)\|_{L^\infty}$. This part requires more refined analysis: we use the  special structure of the Boussinesq  model combined with the  previous commutator estimates. 
\subsection{Energy estimates}
We  start with some elementary  energy estimates.
\begin{prop}\label{Energy}
Let $(v,\rho)$  be a smooth solution of \eqref{bsintro} then
\begin{enumerate}
\item For $p\in]1,\infty[,q\in[1,\infty]$ and $t\in\RR_+$,  we have
$$
\|\rho\|_{L^\infty_tL^2}^2+2\|\nabla\rho\|_{L^2_tL^2}^2\le\|\rho_0\|_{L^2}^2,\quad\|\rho\|_{L^{\infty}_tL^{p,q}}\le C\|\rho_0\|_{L^{p,q}}. 
$$
\item For $v_0\in L^2, \rho_0\in L^2$ and $t\in\RR_+$ we have
$$
\|v(t)\|_{L^2}\le\|v_0\|_{L^2}+t\|\rho_0\|_{L^2}.
$$

\item For $\rho_0\in  L^2$ we have the dispersive estimate
$$
\|\rho(t)\|_{L^\infty}\le C\frac{\|\rho_0\|_{L^2}}{t^{\frac34}}\cdot
$$
The constant $C$ is absolute.
\end{enumerate}

\end{prop}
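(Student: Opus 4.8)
The plan is to obtain all three items by elementary energy methods, the key structural fact being that $v$ is divergence free so that every transport contribution vanishes in the basic estimates.

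\emph{Item (1).} Test the density equation in \eqref{bsintro} against $\rho$: since $\mathrm{div}\,v=0$ one has $\int_{\RR^3}(v\cdot\nabla\rho)\,\rho\,dx=\frac12\int_{\RR^3}v\cdot\nabla|\rho|^2\,dx=0$, and integrating the Laplacian term by parts produces $\|\nabla\rho\|_{L^2}^2$, so $\frac12\frac{d}{dt}\|\rho\|_{L^2}^2+\|\nabla\rho\|_{L^2}^2=0$; integrating in time gives the first inequality. More generally, testing against $|\rho|^{p-2}\rho$ again annihilates the transport term and leaves a nonnegative dissipation, whence $t\mapsto\|\rho(t)\|_{L^p}$ is non-increasing for every $p\in[1,\infty]$ (the endpoints $p=1,\infty$ by a limiting argument, or directly by the maximum principle). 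Freezing the velocity field $v$ of the given solution, the solution operator $S_t:\rho_0\mapsto\rho(t)$ of the \emph{linear} transport--diffusion equation is then bounded on $L^{p_1}$ and on $L^{p_2}$ with norm $\le 1$ for any $1\le p_1<p_2\le\infty$; by the interpolation Theorem \ref{interpol} it is therefore bounded on $L^{p,q}$ (with range exponent again $p$, since $\frac1r=\frac1p$ in that statement) for every $p\in(1,\infty)$ and $q\in[1,\infty]$, which is exactly the asserted Lorentz estimate.

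\emph{Item (2).} Take the $L^2$ inner product of the momentum equation with $v$: the pressure term vanishes because $\mathrm{div}\,v=0$, the convection term vanishes because $\int_{\RR^3}(v\cdot\nabla v)\cdot v\,dx=\frac12\int_{\RR^3}v\cdot\nabla|v|^2\,dx=0$, and Cauchy--Schwarz together with item (1) controls the forcing, giving $\frac12\frac{d}{dt}\|v\|_{L^2}^2\le\|\rho\|_{L^2}\|v\|_{L^2}\le\|\rho_0\|_{L^2}\|v\|_{L^2}$. Dividing by $\|v\|_{L^2}$ (a routine regularization handles the set where it vanishes) yields $\frac{d}{dt}\|v\|_{L^2}\le\|\rho_0\|_{L^2}$, and integration gives $\|v(t)\|_{L^2}\le\|v_0\|_{L^2}+t\|\rho_0\|_{L^2}$.

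\emph{Item (3).} This is the only point needing more than a one-line computation, but it is still standard: because the drift is divergence free, the $L^2\to L^\infty$ smoothing of the linear transport--diffusion equation follows from a De~Giorgi--Nash--Moser iteration that does not see the convection term. Testing against $|\rho|^{p-2}\rho$ for $p\ge2$ and invoking the Gagliardo--Nirenberg inequality $\big\||\rho|^{p/2}\big\|_{L^2}^{2+4/3}\le C\big\||\rho|^{p/2}\big\|_{L^1}^{4/3}\big\|\nabla|\rho|^{p/2}\big\|_{L^2}^2$ in $\RR^3$ produces the differential inequality $\frac{d}{dt}\|\rho\|_{L^p}^p\le -c_p\,\|\rho\|_{L^p}^{5p/3}\,\|\rho\|_{L^{p/2}}^{-2p/3}$. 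Running this recursion along $p_k=2^{k+1}$ with the scaling weights $s^{\alpha_k}$, $\alpha_k=\frac34\bigl(1-2^{-k}\bigr)$, and summing the resulting geometric losses gives $\|\rho(t)\|_{L^\infty}\le C\,t^{-3/4}\|\rho_0\|_{L^2}$ with $C$ absolute; equivalently this is the source-free special case of the estimate recalled in Appendix \ref{appendix}. The only (minor) obstacle is bookkeeping --- tracking the constants $c_p$ and the time exponents through the iteration so that the limiting exponent is exactly $3/4$ and the constant is independent of the data; there is no conceptual difficulty, all the genuinely delicate analysis of the paper being concentrated in the later a priori estimates rather than in this proposition.
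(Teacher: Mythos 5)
Your proof is correct and follows the paper's route. Items (1) and (2) are proved by the same $L^2$ energy estimates exploiting the divergence-free constraint, followed by interpolation (Theorem \ref{interpol}) for the Lorentz bound; for item (3) the paper simply cites Lemma \ref{Nashlem} with $F=G=0$ and $r=2$, and your Moser power-iteration sketch (with $p_k=2^{k+1}$ and $\alpha_k=\tfrac34(1-2^{-k})$) is a correct equivalent --- as you yourself note, it coincides with the source-free special case of the appendix estimate.
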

Note that  the axisymmetric assumption is not needed in this proposition
\begin{proof}
{\bf$(1)$} By  taking the $L^2$-scalar product  of the  second equation of  \eqref{bsintro}  with  $\rho$ and integrating by parts, we get since $v$ is divergence free that
$$
\frac{1}{2}\frac{d}{dt}\|\rho(t)\|_{L^2}^2+\int_{\RR^3}|\nabla\rho(t,x)|^2\, dx=0.
$$
Integrating in time  this differential inequality gives the desired result.

Let us now move to the estimate of the density in Lorentz spaces. First, 
 the same argument yields that for every $p\in[1,\infty]$,  we have
$$
\|\rho(t)\|_{L^p}\le\|\rho_0\|_{L^p}.
$$
It suffices now to use the interpolation result  of Theorem \ref{interpol}. 

{\bf{$(2)$}}
We take the $L^2$-scalar  product of  the velocity equation with $v$ and we integrate by parts 
\begin{equation*}
\frac12\frac{d}{dt}\|v(t)\|_{L^2}^2\le\|v(t)\|_{L^2}\|\rho(t)\|_{L^2}
\end{equation*}
 and this implies  that
$$
\frac{d}{dt}\|v(t)\|_{L^2}\le\|\rho(t)\|_{L^2}.
$$
 Thus, integrating  in time gives 
$$
\|v(t)\|_{L^2}\le\|v_0\|_{L^2}+\int_0^t\|\rho(\tau)\|_{L^2}d\tau.
$$
Since  $\|\rho(t)\|_{L^2} \leq \|\rho_0\|_{L^2},$  we infer
$$
\|v(t)\|_{L^2}\le\|v_0\|_{L^2}+t\|\rho_0\|_{L^2}.
$$

{\bf$(3)$} The estimate is a direct consequence of Lemma \ref{Nashlem}
The proof of  the proposition is now achieved.
\end{proof}

\subsection{Estimates of the moments of $\rho.$}
We have seen in subsection \ref{subsec1} and subsection \ref{subsec2} that the estimates of the commutators involve some moments  of the density. Thus we aim in this paragraph  at giving suitable estimates for the moments that will be needed  later when  we shall perform our   diagonalization of the Boussinesq system.
 Two types of estimates are    discussed:  the energy estimates of the horizontal moments $|x_h|^k\rho$, with $k=1,2$ and some dispersive estimates. More precisely we prove the following.

\begin{prop}\label{prop07}
Let $v$ be a vector field  with zero divergence and satisfying the energy estimate of Proposition \ref{Energy}. Let $\rho$ be a solution of the transport-diffusion equation
$$
\partial_t\rho+v\cdot\nabla\rho-\Delta\rho=0,\quad\rho(0,x)=\rho_0.
$$
Then we have the following estimates.
\begin{enumerate}
\item For $\rho_0\in L^2$ and $x_h\rho_0\in L^2$ , there exists $C_0>0$ such that for every $t\geq0$
$$
\|x_h\rho\|_{L^\infty_tL^2}+\|x_h\rho\|_{L^2_t\dot{H}^1}\lesssim C_0\big(1+t^{\frac54}\big).
$$
\item For  $\rho_0\in L^2\cap L^m, m>6$ and $x_h\rho_0\in L^2$ , there exists $C_0>0$ such that for every $t>0$
\begin{equation*}\label{mom1}
\|x_h\rho(t)\|_{L^\infty}\le C_0(t^{\frac14}+t^{-\frac34}).
\end{equation*}
\item For   $\rho_0\in L^2$ and $|x_h|^2\rho_0\in L^2$, there exists $C_0>0$ such that for every $t\geq0$
$$
\||x_h|^2\rho\|_{L^\infty_tL^2}+\||x_h|^2\rho\|_{L^2_t\dot{H}^1}\le C_0\big(1+t^{\frac52}\big).
$$
\item For   $ \rho_0\in L^2\cap L^6$ and  $|x_h|^2\rho_0\in L^2$, there exists $C_0>0$ such that for every $t>0$
\begin{equation*}\label{mom2}
\||x_h|^2\rho(t)\|_{L^6}\le C_0(t^{\frac{13}{6}}+t^{-\frac12}).
\end{equation*}

\end{enumerate}
\end{prop}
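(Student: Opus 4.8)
The plan is to work with the evolution equations satisfied by the weighted densities and to exploit the one structural feature that survives the weak control on $v$: the transport term is in divergence form and is therefore invisible to $L^p$ energy estimates. Writing $f_i:=x_i\rho$ ($i=1,2$) and $g:=|x_h|^2\rho$, a direct computation from the equation for $\rho$ gives
\begin{equation*}
\partial_t f_i+v\cdot\nabla f_i-\Delta f_i=v^i\rho-2\partial_i\rho,\qquad
\partial_t g+v\cdot\nabla g-\Delta g=2\rho\,(v_h\cdot x_h)-4\,x_h\cdot\nabla_h\rho-4\rho.
\end{equation*}
Items $(1)$ and $(3)$ will follow from $L^2$ energy estimates on these equations; items $(2)$ and $(4)$ are $L^2\to L^\infty$, resp.\ $L^2\to L^6$, smoothing estimates, for which the only available tool is the energy-based Nash--De Giorgi iteration of the appendix, precisely because it does not see the transport term. (All estimates are first derived for sufficiently smooth decaying solutions, which is the setting of this section.)

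For $(1)$ I would pair the $f_i$-equation with $f_i$. The source $-2\partial_i\rho$ gives, after one integration by parts, exactly $+\|\rho\|_{L^2}^2\le\|\rho_0\|_{L^2}^2$. The source $v^i\rho$ is handled by Hölder together with the Sobolev embedding $\dot H^1\hookrightarrow L^6$ and the Gagliardo--Nirenberg inequality $\|\rho\|_{L^3}\lesssim\|\rho\|_{L^2}^{1/2}\|\nabla\rho\|_{L^2}^{1/2}$:
\begin{equation*}
\Big|\int v^i\rho\,f_i\,dx\Big|\le\|v\|_{L^2}\,\|\rho\|_{L^3}\,\|f_i\|_{L^6}
\lesssim\|v\|_{L^2}\,\|\rho_0\|_{L^2}^{1/2}\|\nabla\rho\|_{L^2}^{1/2}\,\|\nabla f_i\|_{L^2},
\end{equation*}
and a Young inequality absorbs part of $\|\nabla f_i\|_{L^2}^2$. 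The point is that the resulting differential inequality
\begin{equation*}
\frac{d}{dt}\|f_i\|_{L^2}^2+\|\nabla f_i\|_{L^2}^2\lesssim\|\rho_0\|_{L^2}^2+\|v\|_{L^2}^2\|\rho_0\|_{L^2}\|\nabla\rho\|_{L^2}
\end{equation*}
carries \emph{no} $\|f_i\|_{L^2}$ on the right, so no Gronwall factor appears; one simply integrates, and using $\|v(s)\|_{L^2}\le\|v_0\|_{L^2}+s\|\rho_0\|_{L^2}$ together with $\int_0^t\|\nabla\rho\|_{L^2}\,ds\le t^{1/2}\big(\int_0^t\|\nabla\rho\|_{L^2}^2\big)^{1/2}\le t^{1/2}\|\rho_0\|_{L^2}$ yields $\|f_i(t)\|_{L^2}^2+\int_0^t\|\nabla f_i\|_{L^2}^2\lesssim C_0(1+t^{5/2})$, which is $(1)$.

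Item $(3)$ is the same scheme applied to $g$. Pairing with $g$: the term $-4\rho$ contributes $-4\|x_h\rho\|_{L^2}^2\le0$ and is dropped; the term $-4\,x_h\cdot\nabla_h\rho$ becomes, after integration by parts, a multiple of $\|x_h\rho\|_{L^2}^2$, already controlled by $(1)$; and the term $2\rho(v_h\cdot x_h)$, rewritten as $2\sum_i v^i f_i\,g$, is estimated by $\|v\|_{L^2}\|f_i\|_{L^3}\|g\|_{L^6}\lesssim\|v\|_{L^2}\|f_i\|_{L^2}^{1/2}\|\nabla f_i\|_{L^2}^{1/2}\|\nabla g\|_{L^2}$, with Young absorbing part of $\|\nabla g\|_{L^2}^2$. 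Once more the right-hand side contains no $\|g\|_{L^2}$, so integrating in time and inserting the bounds of $(1)$ and $\|v(s)\|_{L^2}\le C_0(1+s)$ yields $\|g(t)\|_{L^2}^2+\int_0^t\|\nabla g\|_{L^2}^2\lesssim C_0(1+t^{5})$, which is $(3)$.

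For $(2)$ and $(4)$ the obstacle is exactly the transport term: at this stage we only dispose of $\|v\|_{L^\infty_tL^2}$, so a Duhamel/heat-kernel argument would need $v\cdot\nabla(\cdot)$ as a source in some $L^p$, which is out of reach. Instead I would feed the $f_i$- and $g$-equations into the $L^2\to L^\infty$ convection--diffusion estimate of Appendix \ref{appendix}, whose energy nature makes the divergence-free drift disappear at every step of the iteration; the $L^2$ input is provided by $(1)$ and $(3)$. The source terms are placed in the Lebesgue spaces required by that estimate using $(1)$, $(3)$, the energy bound for $v$, and the dispersive bound $\|\rho(t)\|_{L^\infty}\lesssim\|\rho_0\|_{L^m}\,t^{-3/(2m)}$ (the same Nash lemma at the $L^m$ level); here $m>6$ is used to make $3/(2m)$ small enough that the time singularity is integrable and the source lands in the sub-critical range. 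The $L^6$ bound of $(4)$ then follows either from the $L^2\to L^6$ version of the estimate or by interpolating the $L^\infty$ bound with the $L^2$ bound of $(3)$, which turns $t^{-3/4}$ into $t^{-1/2}$. The only remaining work is bookkeeping the various powers of $t$ so as to land exactly on $t^{1/4}+t^{-3/4}$ and $t^{13/6}+t^{-1/2}$; I expect this routine accounting, rather than any conceptual point, to be the tedious part, the one genuinely delicate point being the borderline integrability in $(2)$ that forces the strict inequality $m>6$.
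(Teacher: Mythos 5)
Your items $(1)$ and $(3)$ match the paper's argument almost line for line: $L^2$ energy estimates on the equations for $x_h\rho$ and $|x_h|^2\rho$, using $\dot H^1\hookrightarrow L^6$ and Gagliardo--Nirenberg to handle the $v$-source, then integrating in time with the energy bounds of Proposition~\ref{Energy}. Item $(2)$ also hits the right tool, namely Lemma~\ref{Nashlem} applied to the $x_h\rho$-equation, although the paper's bookkeeping is marginally simpler: it puts $G=v^h\rho$ in $L^\infty_T L^{2m/(m+2)}$ by H\"older (this is where $m>6$ enters, since $3(m+2)/(2m)<2$ is equivalent to $m>6$) and $F=\rho e_i$ in $L^\infty_T L^6$, without invoking an $L^m\to L^\infty$ dispersive bound on $\rho$. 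Your route through $\|\rho(t)\|_{L^\infty}\lesssim t^{-3/(2m)}$ is a viable alternative and leads to the same constraint on $m$.

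The genuine gap is item $(4)$. Neither alternative you sketch is supported by what is available. There is no ``$L^2\to L^6$ version'' of the estimate in Appendix~\ref{appendix}: Lemma~\ref{Nashlem} only produces $L^\infty$ outputs, and adapting the De Giorgi iteration to yield $L^6$ is not a bookkeeping matter. The interpolation route, $\|g\|_{L^6}\le\|g\|_{L^2}^{1/3}\|g\|_{L^\infty}^{2/3}$, presupposes an $L^\infty$ bound for $g=|x_h|^2\rho$, which you have not established and which does not follow from feeding the $g$-equation into Lemma~\ref{Nashlem}: the best control on the source $2v^h\cdot(x_h\rho)$ near $t=0$ that your earlier items provide is $\|v^h\cdot(x_h\rho)(t)\|_{L^2}\lesssim \|v\|_{L^2}\|x_h\rho\|_{L^\infty}\lesssim t^{-3/4}$, and putting it in $L^{p_1}_T L^2$ requires simultaneously $p_1>4$ (from $2/p_1+3/2<2$) and $p_1<4/3$ (for time integrability of $t^{-3p_1/4}$), a contradiction. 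The paper avoids this altogether by working with the time-weighted function $g_1(t,x)=t\,g(t,x)$ and running a direct $L^6$ energy estimate: multiply the $g_1$-equation by $|g_1|^4g_1$, use the Sobolev inequality $\|g_1\|_{L^{18}}^{18}\lesssim\int|\nabla g_1|^2|g_1|^4\,dx$ and Young's inequality, then integrate. The explicit $t$-weight is what removes the $t=0$ singularity; interpolation alone cannot substitute for it, and this step is the one genuinely new idea missing from your proposal.
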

\begin{Rema}
 Note that when $\rho_0\in L^2$ and $|x_h|^2\rho_0\in L^2$ then automatically the moment of order one belongs to $L^2$, that is $x_h\rho_0\in L^2.$ This is an easy  consequence of the  H\"{o}lder inequality
$$
\|x_h\rho\|_{L^2}\le \|\rho\|_{L^2}^{\frac12}\||x_h|^2\rho\|_{L^2}^{\frac12}.
$$
\end{Rema}
\begin{proof}
{\bf{$(1)$}}
Setting $f=x_h\rho$, we can easily check that  $f$ solves the equation
\begin{equation}\label{disper1}
\partial_t f+v\cdot\nabla f-\Delta f=v^h\rho-2\nabla_h\rho
\end{equation}
with the notations $v^h=(v^1,v^2)$ and $\nabla_h=(\partial_1,\partial_2).$
Now, taking the $L^2$-scalar product \mbox{with $f$, }integrating by parts and using the  H\"{o}lder inequality
\begin{eqnarray*}
\frac{1}{2}\frac{d}{dt}\|f(t)\|_{L^2}^2+\|\nabla f(t)\|_{L^2}^2&=&\int_{\RR^3}v^h\rho fdx-2\int_{\RR^3}\nabla_h\rho\, fdx\\
&\le&\| v\|_{L^2}\|\rho\|_{L^{3}}\|f\|_{L^6}+2\|\rho\|_{L^2}\|\nabla f\|_{L^2}.
\end{eqnarray*}
 By using  the Sobolev embedding $\dot{H}^1\hookrightarrow L^6$ combined with the  Young inequality, we obtain
\begin{eqnarray*}
\frac{d}{dt}\|f(t)\|_{L^2}^2+\|\nabla f(t)\|_{L^2}^2\lesssim \| v\|_{L^2}^2\|\rho\|_{L^{3}}^2+\|\rho\|_{L^2}^2.
\end{eqnarray*}
  Since the Gagliardo-Nirenberg inequality gives that 
 $$
\|\rho\|_{L^3}^2\le\|\rho\|_{L^2}\|\nabla\rho\|_{L^2}, 
$$
 we infer
\begin{eqnarray}\label{eqrs1}
\frac{d}{dt}\|f(t)\|_{L^2}^2+\|\nabla f(t)\|_{L^2}^2\lesssim \| v\|_{L^2}^2\|\rho\|_{L^{2}}\|\nabla\rho\|_{L^2}+\|\rho\|_{L^2}^2.
\end{eqnarray}
Integrating in time and using the  energy estimate  of  Proposition \ref{Energy}-(1),  we thus obtain
\begin{eqnarray*}
\|f(t)\|_{L^2}^2+\|\nabla f(t)\|_{L^2_tL^2}^2&\lesssim &\|f_0\|_{L^2}^2+\| v\|_{L^\infty_tL^2}^2\|\rho_0\|_{L^{2}}\|\nabla\rho\|_{L^1_tL^2}+\|\rho_0\|_{L^2}^2 t\\
&\lesssim&\|f_0\|_{L^2}^2+C_0(1+t^2) t^{\frac12}+\|\rho_0\|_{L^2}^2 t\\
&\le& C_0(1+t^{\frac52}).
\end{eqnarray*}

{\bf{$(2)$}}
  We shall  apply  Lemma \ref{Nashlem} to \eqref{disper1} with $F = \rho \, e_{i}$
  and $G= v_{i} \rho.$ First, we observe that we have obviously from the  H\"{o}lder inequality combined with Proposition \ref{Energy}-(1) that for $m\geq2$
  \begin{eqnarray*} \|G\|_{L^\infty_{t }L^{\frac{2m}{m+2}}} &\leq&  \|v \|_{L^\infty_{t}L^2} \|\rho\|_{L^\infty_tL^m}\\
  &\le&C_0(1+t)
  \end{eqnarray*}
  and $$ 
  \| F \|_{L^\infty_{t}L^6} \leq \|\rho_{0}\|_{L^6}.$$
   Consequently, we get from  Lemma \ref{Nashlem} and Proposition \ref{Energy}  that for $m>6$ and for $t>0,$
   \begin{eqnarray*} 
   \nonumber\| f(t) \|_{L^\infty} &\leq&  C \big(1 + {  t^{-{3 \over 4}} } \big)  \|f_{0}\|_{L^2} + 
     C_0(1+ t^{{1 \over 4}-{3\over 2m}})  + (1+t^{\frac14}) \|\rho_{0}\|_{L^6}\\
     &\le& C_{0}(  {t^{-\frac34}} + t^{1 \over 4}).
     \end{eqnarray*}

{\bf{$(3)$}} The second moment $g=|x_h|^2\rho$  solves  the following equation
\begin{eqnarray*}
\partial_t g+v\cdot\nabla g-\Delta g&=&2v^h(x_h\rho)-2\nabla_h\rho-4\textnormal{div}_h(x_h\rho)\\
&=&v_h f-2\nabla_h\rho-4\textnormal{div}_h f.
\end{eqnarray*}
 By using again an $L^2$ energy estimate, we  find that 
\begin{eqnarray*}
\frac{d}{dt}\|g(t)\|_{L^2}^2+\|\nabla g(t)\|_{L^2}^2\lesssim \| v(t)\|_{L^2}^2\|f(t)\|_{L^{2}}\|\nabla f(t)\|_{L^2}+\|\rho(t)\|_{L^2}^2+\|f(t)\|_{L^2}^2.
\end{eqnarray*}
Thus, by  integrating in time and using the energy estimates for $\rho,v$ and $f$ we get 
\begin{eqnarray*}
\|g(t)\|_{L^2}+\|\nabla g \|_{L^2_tL^2}&\lesssim&\|g_0\|_{L^2}+ \| v\|_{L^\infty_tL^2}\|f\|_{L^\infty_tL^{2}}^{\frac12}\|\nabla f\|_{L^2_tL^2}^{\frac12}t^{\frac14}+\|\rho_0\|_{L^2} t^{\frac12}+\|f\|_{L^\infty_tL^2}t^{\frac12}\\
&\le& C_0\big(1+t^{\frac52}\big),
\end{eqnarray*}
where $C_0$ is a constant depending on the quantities $\||x_h|^k\rho_0\|_{L^2}$ for $k=0,1,2.$

{\bf{$(4)$}} By  setting $g_1(t,x)=tg(t,x),$ we have that 
\begin{eqnarray*}
\partial_t g_1+v\cdot\nabla g_1-\Delta g_1&=&g+2v^h(tx_h\rho)-2t\nabla_h\rho-4\textnormal{div}_h(t\,x_h\rho).
\end{eqnarray*}
Multiplying this equation by $|g_1|^4 g_1$, integrating by parts and  the obvious inequality $|x_h\rho|\le |\rho|^{\frac12}|g_1|^{\frac12}$, we thus get 
\begin{eqnarray*}
\frac16\frac{d}{dt} \|g_1\|_{L^6}^6+5\int_{\RR^3}|\nabla g_1|^2|g_1|^4dx&\lesssim&\|g\|_{L^6}\|g_1\|_{L^6}^5+{t}^{\frac12}\int_{\RR^3}|v||\rho|^{\frac12} |g_1|^{\frac{11}{2}}dx\\
&+&t\int_{\RR^3}|\rho||\nabla g_1||g_1|^4dx+t^{\frac12}\int_{\RR^3}|\rho|^{\frac12}|g_1|^{\frac92}|\nabla g_1|dx.
\end{eqnarray*}
It follows from H\"{o}lder inequality that
$$
t^{\frac12}\int_{\RR^3}|\rho|^{\frac12}|g_1|^{\frac92}|\nabla g_1|dx\le {t}^{\frac12}\|v\|_{L^2}\|g_1\|_{L^{18}}^{\frac{11}{2}}\|\rho\|_{L^{\frac{18}{7}}}^{\frac12}
$$
Consequently 
\begin{eqnarray*}
\frac16\frac{d}{dt} \|g_1\|_{L^6}^6+5\int_{\RR^3}|\nabla g_1|^2|g_1|^4dx
&\lesssim&\|g\|_{L^6}\|g_1\|_{L^6}^5+{t}^{\frac12}\|v\|_{L^2}\|g_1\|_{L^{18}}^{\frac{11}{2}}\|\rho\|_{L^{\frac{18}{7}}}^{\frac12}\\
&+&\big(t\|\rho\|_{L^6}\|g_1\|_{L^6}^2+t^{\frac12}\|\rho\|_{L^6}^{\frac12}\|g_1\|_{L^6}^{\frac52}\big)\Big(\int_{\RR^3}|\nabla g_1|^2|g_1|^4dx\Big)^{\frac12}.
\end{eqnarray*}
Now we can use the Young inequality combined with the following  Sobolev inequality 
\begin{eqnarray*}
\|g_1\|_{L^{18}}^{18}\lesssim\|\nabla (g_1^3)\|_{L^2}^2=9\int_{\RR^3}|\nabla g_1|^2|g_1|^4dx
\end{eqnarray*}
 to  obtain that 
\begin{eqnarray*}
\frac{d}{dt} \|g_1\|_{L^6}^6+c\|g_1\|_{L^{18}}^6+c\int_{\RR^3}|\nabla g_1|^2|g_1|^4dx&\lesssim&\|g\|_{L^6}\|g_1\|_{L^6}^5+{t}^6\|v\|_{L^2}^{12}\|\rho\|_{L^{\frac{18}{7}}}^{6}\\
&+&t^{2}\|\rho\|_{L^6}^{2}\|g_1\|_{L^6}^{4}+t\|\rho\|_{L^6}\|g_1\|_{L^6}^{5}.
\end{eqnarray*}
 By using Proposition \ref{Energy}, we deduce that 
\begin{eqnarray*}
\frac{d}{dt} \|g_1\|_{L^6}^6+c\|g_1\|_{L^{18}}^6+c\int_{\RR^3}|\nabla g_1|^2|g_1|^4dx&\lesssim&\|g\|_{L^6}\|g_1\|_{L^6}^5+C_0t^6(1+t^{12})\\
&+&t^{2}\|\rho_0\|_{L^6}^{2}\|g_1\|_{L^6}^{4}+t\|\rho_0\|_{L^6}\|g_1\|_{L^6}^{5}.
\end{eqnarray*}
Next, by using again the  Young inequality, we infer
\begin{eqnarray*}
\frac{d}{dt} \|g_1(t)\|_{L^6}^6&\le&C_0(t^6+t^{18})+C_0\big(t+\|g(t)\|_{L^6}\big)\|g_1(t)\|_{L^6}^5.\\
\end{eqnarray*}
 By integrating in time this differential inequality, we obtain that 
\begin{eqnarray*}
\|g_1(t)\|_{L^6}^6&\le&C_0(t^7+t^{19})+C_0\int_0^t(\tau+\|g(\tau)\|_{L^6})\|g_1(\tau)\|_{L^6}^5d\tau.
\end{eqnarray*}
Therefore we get from Proposition \ref{prop07}-(3) combined with  the Sobolev embedding
$ \dot{H}^1 \subset L^6$ that 
\begin{eqnarray*}
\|g_1(t)\|_{L^6}&\le& C_0(t^{\frac76}+t^{\frac{19}{6}})+C_0\|g\|_{L^1_tL^6}\\
&\le&C_0(t^{\frac76}+t^{\frac{19}{6}})+t^{\frac12}\|\nabla g\|_{L^2_tL^2}\\
&\le&C_0(t^{\frac76}+t^{\frac{19}{6}})+C_0(1+t^{\frac52})t^{\frac12}.
\end{eqnarray*}
 Therefore, we obtain  that
$$
\||x_h|^2\rho(t)\|_{L^6}\le C_0(t^{\frac{13}{6}}+t^{-\frac12}).
$$
This ends  the proof of Proposition \ref{prop07}.
\end{proof}

\subsection{Strong estimates}
 As in the study  of the  axisymmetric Euler equation,  the  main important quantity that one should estimate
  in order to get the global existence of smooth solutions  is $\|\frac{\omega}{r} (t)\|_{L^{3,1}} $.  Indeed, this  will enable  us to bound stronger norms such as  $\|\omega(t)\|_{L^\infty}$ and $\|\nabla v(t)\|_{L^\infty}$ which are the significant quantities to propagate  higher regularities.
\subsubsection{Estimate of $\|\frac{\omega}{r}(t)\|_{L^\infty}$}
 First, we will introduce the following notation: we denote by  $\Phi_k$ any function
of the form 
$$
\Phi_k(t)=  C_{0}\underbrace{ \exp(...\exp  }_{k\,times}(C_0t^{\frac{19}{6}})...),
$$
where $C_{0}$ depends on the involved norms of the initial data and its value may vary from line to line up to some absolute constants. 
We will make an intensive  use (without mentionning it) of  the following trivial facts
$$
\int_0^t\Phi_k(\tau)d\tau\leq \Phi_k(t)\qquad{\rm and}\qquad \exp({\int_0^t\Phi_k(\tau)d\tau})\leq \Phi_{k+1}(t).
$$

We first establish the following result.
\begin{prop}\label{Strong}
Let $v_0\in L^2$ be an axisymmetric vector field such that $\frac{\omega_0}{r}\in L^{3,1}$ and $\rho_0 \in L^2\cap L^{m}, $ for   $m>6$, axisymmetric  and such that $|x_h|^2\rho_0\in L^2$. Then, we have for every $t\in\RR_+$
$$
\big\|\frac{\omega}{r}(t)\big\|_{L^{3,1}}+\big\|\frac{v^r}{r}(t)\big\|_{L^\infty}\le\Phi_2(t),
$$
where $C_0$ is a constant depending on the norms of the initial data.
\end{prop}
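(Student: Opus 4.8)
The plan is to run the diagonalization strategy outlined in the introduction and then close a nonlinear (logarithmic) Gronwall estimate. Write $\zeta=\omega_\theta/r$ and introduce $\Gamma=\zeta+(\partial_r/r)\Delta^{-1}\rho$. Since $\rho$ is axisymmetric, Proposition \ref{prop1} (estimate \eqref{prop1-2}) gives $\|(\partial_r/r)\Delta^{-1}\rho(t)\|_{L^{3,1}}\lesssim\|\rho(t)\|_{L^{3,1}}\lesssim\|\rho_0\|_{L^{3,1}}$, the last bound coming from Proposition \ref{Energy}-(1) and the embedding $L^2\cap L^m\hookrightarrow L^{3,1}$ (valid since $2<3<m$); hence $\|\zeta(t)\|_{L^{3,1}}$ and $\|\Gamma(t)\|_{L^{3,1}}$ differ by at most a constant $C_0$ and it suffices to estimate $\Gamma$. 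Substituting $\Delta\rho=\partial_t\rho+v\cdot\nabla\rho$ into \eqref{mBouss} shows that $\Gamma$ solves $\partial_t\Gamma+v\cdot\nabla\Gamma=-\big[(\partial_r/r)\Delta^{-1},v\cdot\nabla\big]\rho$. Because $v$ is divergence free, its flow is measure preserving, so it preserves all Lorentz (quasi)norms; composing with the flow and using Minkowski's inequality in the Banach space $L^{3,1}$ yields
\[
\|\Gamma(t)\|_{L^{3,1}}\le\|\Gamma_0\|_{L^{3,1}}+\int_0^t\Big\|\big[(\partial_r/r)\Delta^{-1},v\cdot\nabla\big]\rho(\tau)\Big\|_{L^{3,1}}\,d\tau.
\]

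Next I would invoke the commutator bound of Theorem \ref{prop2}, which turns the integrand into $\|\zeta\|_{L^{3,1}}\big(\|\rho x_h\|_{B^0_{\infty,1}\cap L^2}+\|\rho\|_{B^{1/2}_{2,1}}\big)$. Of the factors on the right, $\|\rho x_h\|_{L^\infty_tL^2}\le C_0(1+t^{5/4})$ by Proposition \ref{prop07}-(1), and $\|\rho\|_{L^1_tB^{1/2}_{2,1}}$ is bounded by a continuous function of $t$ depending only on the data, by splitting $\rho$ into low and high dyadic blocks and using $\rho\in L^\infty_tL^2\cap L^2_t\dot H^1$ from Proposition \ref{Energy}-(1) (parabolic gain). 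Feeding these bounds into the previous display and using $\|\zeta\|_{L^{3,1}}\le\|\Gamma\|_{L^{3,1}}+C_0$, Gronwall's lemma produces exactly \eqref{Gammaintro}, namely $\|\zeta(t)\|_{L^{3,1}}\le C_0(t)\exp\big(C\|\rho x_h\|_{L^1_tB^0_{\infty,1}}\big)$.

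It then remains to control $\|\rho x_h\|_{L^1_tB^0_{\infty,1}}$, which is the bulk of the argument. First, Proposition \ref{prop07}-(2) gives a global bound $\|\rho x_h(t)\|_{L^\infty}\le C_0(t^{1/4}+t^{-3/4})$ in terms of the data only (obtained by Nash--De Giorgi iterations, precisely so that the transport term does not contribute). Then, applying $\Delta_q$ to the equation \eqref{momentintro} for $f=\rho x_h$ and running an $L^2$ parabolic estimate, with the commutator $[\Delta_q,v\cdot\nabla]f$ controlled by Proposition \ref{propcomm} — which brings in $\|f\,x_h\|_{L^6}=\||x_h|^2\rho\|_{L^6}$ and hence the second moment estimates of Proposition \ref{prop07}-(3),(4) — furnishes a smoothing bound for the high frequencies of $f$. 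Splitting $\|f(\tau)\|_{B^0_{\infty,1}}$ at a time dependent frequency $N(\tau)$ (low frequencies estimated by the $L^\infty$ bound and Bernstein, high frequencies by the smoothing bound and $L^2\to L^\infty$ Bernstein inequalities) and optimizing in $N(\tau)$ yields the logarithmic estimate \eqref{titt1},
\[
\|\rho x_h\|_{L^1_tB^0_{\infty,1}}\le C_0(t)\Big(1+\int_0^t h(\tau)\log\big(2+\|\zeta\|_{L^\infty_\tau L^{3,1}}\big)\,d\tau\Big),\qquad h\in L^1_{\mathrm{loc}}(\RR_+).
\]
Inserting this into \eqref{Gammaintro} and setting $Z(t)=\log\big(2+\|\zeta\|_{L^\infty_tL^{3,1}}\big)$ gives $Z(t)\le C_0(t)\big(1+\int_0^t h(\tau)Z(\tau)\,d\tau\big)$; Gronwall bounds $Z(t)$ by $C_0(t)\exp\big(C_0(t)\|h\|_{L^1_t}\big)$, whence $\|\zeta(t)\|_{L^{3,1}}\le e^{Z(t)}\le\Phi_2(t)$, the two nested exponentials and the $t^{19/6}$ growth coming from the polynomial-in-$t$ constants $C_0(t)$ supplied by Proposition \ref{prop07}-(4). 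Finally $\|v^r/r(t)\|_{L^\infty}\lesssim\|\zeta(t)\|_{L^{3,1}}$ by \eqref{convol}, so it is also bounded by $\Phi_2(t)$.

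The genuine difficulty is the self-referential structure of these estimates: the commutator bound controlling $\Gamma$ already involves the unknown $\|\zeta\|_{L^{3,1}}$, and the quantity $\|\rho x_h\|_{L^1_tB^0_{\infty,1}}$ sitting in the exponent is itself controlled only logarithmically in $\|\zeta\|_{L^{3,1}}$ through the delicate frequency-localized estimate \eqref{titt1}. Making \eqref{titt1} rigorous — in particular handling $[\Delta_q,v\cdot\nabla](x_h\rho)$ with only the energy control on $v$ available, which is exactly what forces us to propagate the first and second horizontal moments of $\rho$ in Proposition \ref{prop07} — is the technical heart of the proof, after which the double-exponential bound $\Phi_2$ drops out of the nonlinear Gronwall argument.
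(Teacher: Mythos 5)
Your proposal reproduces the paper's own proof: the same change of unknown $\Gamma=\zeta+(\partial_r/r)\Delta^{-1}\rho$, the same commutator bound from Theorem~\ref{prop2}, the same control of $\|\rho\|_{L^1_tB^{1/2}_{2,1}}$ by parabolic interpolation and of $\|x_h\rho\|$ via Proposition~\ref{prop07}, the same frequency split at a time-dependent $N(\tau)$ using the Nash--De~Giorgi $L^\infty$ bound for low modes and the smoothing estimate with Proposition~\ref{propcomm} for high modes, and the same nested Gronwall argument on $\log(2+\|\zeta\|_{L^\infty_tL^{3,1}})$. The only cosmetic deviation is that you derive the $L^{3,1}$ transport inequality for $\Gamma$ by composing with the measure-preserving flow and using Minkowski, whereas the paper first obtains the $L^p$ bound for all $p$ and then interpolates via Theorem~\ref{interpol}; both are valid.
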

\begin{proof}
Recall that the equation of the scalar component of the vorticity $\omega=\omega_\theta e_\theta$ is given by
\begin{equation}
 \label{tourbillon1}
\partial_t \omega_\theta +v\cdot\nabla\omega_\theta=\frac{v^r}{r}\omega_\theta-\partial_r\rho.
\end{equation}
It follows that  the evolution of the quantity $\frac{\omega_\theta}{r}$
is governed by  the equation
\begin{equation}
\label{equation_i}
\big(\partial_t+v\cdot\nabla\big)\frac{\omega_\theta}{r}=-\frac{\partial_r\rho}{r}\cdot
\end{equation}
 By applying the operator $\frac{\partial_r}{r}\Delta^{-1}$ to the equation of the density in \eqref{bsintro}, we
  obtain that 
$$
\big(\partial_t+v\cdot\nabla\big)\big(\frac1r\partial_r\Delta^{-1}\rho\big)-\frac{\partial_r\rho}{r}=-\Big[\frac1r\partial_r\Delta^{-1},v\cdot\nabla  \Big]\rho.
$$ By setting $\Gamma:=\frac{\omega_\theta}{r}+\frac{\partial_r}{r}\Delta^{-1}\rho,$ 
 we infer $$
\big(\partial_t+v\cdot\nabla\big)\Gamma=-\Big[\frac1r\partial_r\Delta^{-1},v\cdot\nabla  \Big]\rho.
$$
Observe that the incompressibility of the velocity field  allows us to get that  for every $p\in[1,\infty]$
$$
\|\Gamma(t)\|_{L^p}\le \|\Gamma_0\|_{L^p}+\int_0^t\Big\|\Big[\frac1r\partial_r\Delta^{-1},v\cdot\nabla  \Big]\rho\Big\|_{L^p}d\tau.
$$
Therefore we get by the interpolation result of Theorem \ref{interpol}   that for $1<p<\infty$ and $q\in[1,\infty]$
$$
\|\Gamma(t)\|_{L^{p,q}}\le\|\Gamma_0\|_{L^{p,q}}+\int_0^t\Big\|\Big[\frac1r\partial_r\Delta^{-1},v\cdot\nabla  \Big]\rho(\tau)\Big\|_{L^{p,q}}d\tau.
$$
In particular, we have
$$
\|\Gamma(t)\|_{L^{3,1}}\le\|\Gamma_0\|_{L^{3,1}}+\int_0^t\Big\|\Big[\frac1r\partial_r\Delta^{-1},v\cdot\nabla  \Big]\rho(\tau)\Big\|_{L^{3,1}}d\tau.
$$
Applying Theorem \ref{prop2} we find
$$
\|\Gamma(t)\|_{L^{3,1}}\le\|\Gamma_0\|_{L^{3,1}}+\int_0^t\|(\omega_{\theta}/r)(\tau)\|_{L^{3,1}}\big(\|x_h\rho(\tau)\|_{B_{\infty,1}^0\cap L^2}+\|\rho(\tau)\|_{B_{2,1}^{\frac12}}\big)d\tau.
$$
 Moreover, thanks to  Proposition \ref{prop1} and Proposition \ref{Energy} we  have 
\begin{eqnarray*}
\|(\omega_{\theta}/r)(t)\|_{L^{3,1}}&\le&\|\Gamma(t)\|_{L^{3,1}}+\big\|\frac1r\partial_r\Delta^{-1}\rho(t)\big\|_{L^{3,1}}
\le\|\Gamma(t)\|_{L^{3,1}}+C\|\rho_0\|_{L^{3,1}}.
\end{eqnarray*}
 The combination of these last estimates yield
$$
\|(\omega_{\theta}/r)(t)\|_{L^{3,1}}\le C \big(\|\omega_0/r\|_{L^{3,1}}+\|\rho_0\|_{L^{3,1}}\big)+\int_0^t\|(\omega_{\theta}/r)(\tau)\|_{L^{3,1}}\big(\|x_h\rho (\tau)\|_{B_{\infty,1}^0\cap L^2}+\|\rho(\tau)\|_{B_{2,1}^{\frac12}}\big)d\tau.
$$ 
Thus we get by the   Gronwall inequality that 
\begin{equation}\label{exp1}
\|(\omega_{\theta}/r)(t)\|_{L^{3,1}}\le C\big(\|\omega_0/r\|_{L^{3,1}}+\|\rho_0\|_{L^{3,1}}\big)  \exp \big( {C\|x_h\rho\|_{L^1_t(B_{\infty,1}^0\cap L^2)}+C\|\rho\|_{L^1_tB_{2,1}^{\frac12} } } \big).
\end{equation}
The term   $ \|\rho\|_{L^1_tB_{2,1}^{\frac12}}$ will be controlled only by energy estimates. Indeed, the interpolation estimate 
$$
 \|\rho\|_{B_{2,1}^{\frac12}}\lesssim \|\rho\|_{L^2}^{\frac12}\|\nabla\rho\|_{L^2}^{\frac12}
$$
combined with  Proposition \ref{Energy}  and the   H\"{o}lder inequality give
\begin{eqnarray*}
 \|\rho\|_{L^1_tB_{2,1}^{\frac12}}&\lesssim& \|\rho\|_{L^\infty_t L^2}^{\frac12}t^{\frac34}\|\nabla\rho\|_{L^2_t L^2}^{\frac12}
 \lesssim t^{\frac34}\|\rho_0\|_{L^2}.
\end{eqnarray*}
 To control the term  $ \|x_h\rho\|_{L^1_tL^2}$  in the right hand side of \eqref{exp1},  we can  use Proposition \ref{prop07}: 
\begin{eqnarray*}
 \|x_h\rho\|_{L^1_tL^2}
 &\le&t\|x_h\rho\|_{L^\infty_t L^2}
\le C_0(1+t^{\frac94}).
\end{eqnarray*}
Consequently we obtain in view of \eqref{exp1}
\begin{equation}\label{eqs443}
\big\|\frac{\omega}{r}(t)\big\|_{L^{3,1}}\le C_0e^{C_0t^{\frac94}} e^{C\|\rho x_h\|_{L^1_tB_{\infty,1}^0}}.
\end{equation}
Now it remains to estimate the right term of \eqref{eqs443}  inside the exponential.  Let us first sketch the strategy of our approach. We will introduce an integer  $N(t)\in \NN$ that will be chosen in an optimal way in 
 the end and we will split  in frequency  the involved quantity into two parts: low frequencies corresponding to $q\le N(t)$ and high frequencies associated to $q>N(t).$ To estimate the low fequencies we use the dispersive result   of Proposition \ref{prop07}-(2). The estimate of high frequencies is based on a smoothing effect.

 By using Proposition \ref{prop07}-(2) and the  Bernstein inequality, we find that 
\begin{eqnarray}\label{log1}
 \nonumber\|x_h\rho\|_{L^1_tB_{\infty,1}^0}&=&\int_0^t\sum_{q\le N(\tau)}\|\Delta_q(x_h\rho)(\tau)\|_{L^\infty}d\tau+\int_0^t\sum_{q> N(\tau)}\|\Delta_q(x_h\rho)(\tau)\|_{L^\infty}d\tau\\
 &\le& C_0\int_0^t\big(\tau^{\frac14}+\tau^{-\frac34}\big)N(\tau)d\tau+C \int_0^t\sum_{q> N(\tau)}2^{q\frac32}\|\Delta_q(x_h\rho)(\tau)\|_{L^2}d\tau.
\end{eqnarray}
Now we intend to estimate the last sum in the above inequality.  For this purpose we  localize in frequency the  equation  for 
 $f=x_h\rho$  which is 
 $$
\partial_t f+v\cdot\nabla f-\Delta f=v^h\rho-2\nabla_h\rho:=F
$$
 By Setting $f_q:=\Delta_q f,$   we infer 
$$
\partial_t f_q+v\cdot\nabla f_q-\Delta f_q=-[\Delta_q,v\cdot\nabla] f+F_q
$$
From an  $L^2$ energy estimate, we obtain that 
$$
\frac{1}{2}\frac{d}{dt}\|f_q(t)\|_{L^2}^2-\int_{\RR^3}(\Delta f_q)f_qdx\le \|f_q\|_{L^2}\big(\|[\Delta_q,v\cdot\nabla] f\|_{L^2}+\|F_q\|_{L^2}\big).
$$
 Since the Bessel identity yields
$$
c2^{2q}\|f_q\|_{L^2}^2\le-\int_{\RR^3}(\Delta f_q)f_qdx, 
$$
 it  follows that
$$
\frac{d}{dt}\|f_q(t)\|_{L^2}+c2^{2q}\|f_q(t)\|_{L^2}\le C \big(\|[\Delta_q,v\cdot\nabla] f\|_{L^2}+\|F_q\|_{L^2}\big).
$$
Therefore we obtain   by integration in time that
$$
\|f_q(t)\|_{L^2}\lesssim e^{-ct2^{2q}}\|f_q(0)\|_{L^2}+\int_0^te^{-c(t-\tau)2^{2q}}\big(\|[\Delta_q,v\cdot\nabla] f\|_{L^2}+\|F_q\|_{L^2}\big)d\tau.
$$
To estimate the commutator in the right hand side,  we can use   Proposition \ref{propcomm} and \mbox{Proposition \ref{prop07}},  
\begin{eqnarray*}
\|[\Delta_q,v\cdot\nabla] f(\tau)\|_{L^2}&\le& C\|(\omega_{\theta}/r)(\tau)\|_{L^{3,1}}\big(\||x_h|^2\rho(\tau)\|_{L^6}+\|x_h\rho(\tau)\|_{L^2}\big)\\
&\leq& C_0 \|(\omega_{\theta}/r)(\tau)\|_{L^{3,1}}\big(\tau^{\frac{13}{6}}+\tau^{-\frac12}\big).
\end{eqnarray*}
Hence we get
\begin{eqnarray}\label{ha1}
\nonumber\|f_q(t)\|_{L^2}&\lesssim& e^{-ct2^{2q}}\|f_q(0)\|_{L^2}+\int_0^te^{-c(t-\tau)2^{2q}}\|F_q(\tau)\|_{L^2}d\tau\\&+&C_0\int_0^te^{-c(t-\tau)2^{2q}}
\|(\omega_{\theta}/r)(\tau)\|_{L^{3,1}}\big(\tau^{\frac{13}{6}}+\tau^{-\frac12}\big)d\tau.
\end{eqnarray}
Let us set  $\mathcal{K}(\tau)=\tau^{\frac{13}{6}}+\tau^{-\frac12},$ then   \eqref{ha1}  and convolution 
inequalities yield
\begin{eqnarray*}
&& \int_0^t\sum_{q>N(\tau)}2^{q\frac32}\|\Delta_q(x_h\rho)(\tau)\|_{L^2}d\tau\lesssim\sum_{q\geq-1}
2^{-\frac12 q}\big(\|f_q(0)\|_{L^2}+\|F_q\|_{L^1_tL^2}\big)\\
\quad&&\quad+C_0 \int_0^t\sum_{q> N(\tau)}2^{q\frac32}\int_0^\tau e^{-c(\tau-\tau')2^{2q}}\mathcal{K}(\tau')
\|(\omega_{\theta}/r)(\tau')\|_{L^{3,1}}d\tau'\\
\quad&&\quad\lesssim\|f_0\|_{L^2}+\|F\|_{L^1_tL^2}+ C_0\int_0^t\|\omega_{\theta}/r\|_{L^\infty_\tau L^{3,1}}\Big(\sum_{q> N(\tau)}2^{q\frac32}\int_0^\tau e^{-c(\tau-\tau')2^{2q}}\mathcal{K}(\tau')
d\tau'\Big)d\tau.
\end{eqnarray*}
Moreover, from  Proposition \ref{Energy} and Lemma \ref{Nashlem}, we also have 
\begin{eqnarray*}
\|F\|_{L^1_tL^2}&\le& t^{\frac12}\|\nabla\rho\|_{L^2_tL^2}+\|v\|_{L^\infty_tL^2}\|\rho\|_{L^1_tL^\infty}\\
&\le&C\|\rho_0\|_{L^2} t^{\frac12}+C_0(1+t)\|\rho_0\|_{L^2}\Big(\int_0^t\tau^{-\frac{3}{4}}d\tau+t\Big)\\
&\le&C_0(1+t^{2}).
\end{eqnarray*}
Inserting these estimates into \eqref{log1} yields
\begin{eqnarray}\label{log200}
 \nonumber\|x_h\rho\|_{L^1_tB_{\infty,1}^0}
&\le& C_0(1+t^{2})+C_0\int_0^t\big(\tau^{\frac14}+\tau^{-\frac34}\big)N(\tau)d\tau\\
\nonumber&+&C_0
   \int_0^t\|\omega_{\theta}/r\|_{L^\infty_\tau L^{3,1}}\Big(\sum_{q>N(\tau)}2^{q\frac32}\int_0^\tau e^{-c(\tau-\tau')2^{2q}}\big(\{\tau'\}^{\frac{13}{6}}+\{\tau'\}^{-\frac12}\big)d\tau'\Big)d\tau\\
   \nonumber&\le&
   C_0(1+t^{2})+C_0\int_0^t\big(\tau^{\frac14}+\tau^{-\frac34}\big)N(\tau)d\tau\\
   &+&C_0\int_0^t\|\omega_{\theta}/r\|_{L^\infty_\tau L^{3,1}}\Big(\tau^{\frac{13}{6}}2^{-\frac12 N(\tau)}+\sum_{q>N(\tau)}2^{q\frac32}\int_0^\tau e^{-c(\tau-\tau')2^{2q}}\{\tau'\}^{-\frac12}d\tau'\Big)d\tau.
\end{eqnarray}
By a change of variables we get
\begin{eqnarray*}
\\ & &  \sum_{q> N(\tau)}2^{q\frac32}\int_0^\tau e^{-c(\tau-\tau')2^{2q}}\{\tau'\}^{-\frac12}d\tau' \\ 
&=&\sum_{q> N(\tau)}2^{q\frac12}e^{-c\tau 2^{2q}}\int_0^{2^{2q}\tau} e^{c\tau'}\{\tau'\}^{-\frac12}d\tau'\\
&=&
\sum_{q\in \Lambda_1(\tau)}2^{q\frac12}e^{-c\tau 2^{2q}}\int_0^{2^{2q}\tau} e^{c\tau'}\{\tau'\}^{-\frac12}d\tau'
+\sum_{q\in \Lambda_2(\tau)}2^{q\frac12}e^{-c\tau 2^{2q}}\int_0^{2^{2q}\tau} e^{c\tau'}\{\tau'\}^{-\frac12}d\tau'\\
&:=&\hbox{I}(\tau)+\hbox{II}(\tau).
\end{eqnarray*}
with 
$$\Lambda_1(\tau)=\Big\{ q> N(\tau)\hbox{ and } \tau 2^{2q}\geq1\Big\}\quad\hbox{and}\quad \Lambda_2(\tau)= \Big\{ q> N(\tau)\hbox{ and } \tau 2^{2q}\leq1\Big\}.
$$ To estimate the first term we use the following inequality which can, be proven by integration by parts: there exists $C>0$ such that for every $x\geq 1$ 
$$
\int_0^xy^{-\frac12}e^{cy}dy\le Cx^{-\frac12}e^{cx}.
$$
It follows that
\begin{eqnarray*}
\hbox{I}(\tau)&\lesssim&\tau^{-\frac12}\sum_{q> N(\tau)}2^{-\frac12 q}
\lesssim 2^{-\frac12N(\tau)}\tau^{-\frac12}.
\end{eqnarray*}
To estimate the second term, we observe that the integral is bounded by  a fixed number and
 hence, we find that 
\begin{eqnarray*}
\hbox{II}(\tau)&\lesssim&\sum_{q\in \Lambda_2(\tau)}2^{\frac12 q}
\lesssim 2^{-\frac12N(\tau)}\sum_{2^{2q}\leq\tau^{-1}}2^{ q}
\lesssim 2^{-\frac12N(\tau)}(1+\tau^{-\frac12}).
\end{eqnarray*}
Gathering  these estimates, we obtain
$$
\sum_{q>N(\tau)}2^{q\frac32}\int_0^\tau e^{-c(\tau-\tau')2^{2q}}\{\tau'\}^{-\frac12}d\tau'\le 2^{-\frac12N(\tau)}(1+\tau^{-\frac12}).
$$
 By plugging  this estimate into \eqref{log200}, we get 
\begin{eqnarray}
 \nonumber\|x_h\rho\|_{L^1_tB_{\infty,1}^0}
&\le& C_0(1+t^{2})+C_0\int_0^t\big(\tau^{\frac{13}{6}}+\tau^{-\frac34}\big)\Big(N(\tau)+\|\omega_{\theta}/r\|_{L^\infty_\tau L^{3,1}}2^{-\frac12 N(\tau)}\Big)d\tau.
\end{eqnarray}
We choose $N$ such that
$$
N(\tau)=2\Big[\log_2\big(2+\|\omega_{\theta}/r\|_{L^\infty_\tau L^{3,1}}\big)\Big]
$$
and then we find
\begin{equation}\label{eqs444}
\|x_h\rho\|_{L^1_tB_{\infty,1}^0}
\le C_0(1+t^{\frac{19}{6}})+C_0\int_0^t\big(\tau^{\frac{13}{6}}+\tau^{-\frac34}\big)\log\big(2+\|\omega_{\theta}/r\|_{L^\infty_\tau L^{3,1}}\big)d\tau.
\end{equation} \
Putting together \eqref{eqs443}, \eqref{eqs444} and  Proposition \ref{prop07}-(2), we find that 
\begin{eqnarray*}
\log\big(2+\|\omega_{\theta}/r\|_{L^\infty_t L^{3,1}}\big)&\le& C_0(1+t^{\frac{19}{6}})+C_0\int_0^t\big(\tau^{\frac{13}{6}}+\tau^{-\frac34}\big)\log\big(2+\|\omega_{\theta}/r\|_{L^\infty_\tau L^{3,1}}\big)d\tau.
. \end{eqnarray*}
From the  Gronwall inequality, we infer 
\begin{eqnarray*}
\log\big(2+\|\omega_{\theta}/r\|_{L^\infty_t L^{3,1}}\big)\le C_0(1+t^{\frac{19}{6}})e^{C_0(t^{\frac{19}{6}}+t^{\frac14})}
 \le \Phi_2(t).
\end{eqnarray*}
Therefore we get by using again \eqref{eqs444} that 
$$
\big\|{\omega_\theta \over r}(t)\big\|_{L^{3,1}}\le \Phi_2(t).
$$
Since $\frac{\omega}{r}=\frac{\omega_\theta}{r} e_\theta$,  \eqref{prop1-2} implies that
\begin{equation}\label{vort54}
\big\|{\omega \over r}(t)\big\|_{L^{3,1}}\le\Phi_2(t).
\end{equation}
Finally, thanks to \eqref{convol}, we obtain
\begin{eqnarray*}
\|\frac{v^r}{r}(t)\|_{L^\infty}\le C\|\frac{\omega}{r}(t)\|_{L^{3,1}}
\le \Phi_2(t).
\end{eqnarray*}
This ends  the proof of Proposition \ref{Strong}.
\end{proof}
\subsubsection{Estimate of $\|\omega(t)\|_{L^\infty}$} Our purpose now is to bound the vorticity.
\begin{prop}\label{vorticity}
Let $v_0\in L^2$ be an axisymmetric divergence free vector field without swirl  such that $\omega_0\in L^\infty,\,\frac{\omega_0}{r}\in L^{3,1}$. Let  $\rho_0$ be axisymmetric scalar function, belonging to $ L^2\cap L^{m}, m>6$ and such that $|x_h|^2\rho_0\in L^2$. Then we have for every $t\in\RR_+$
$$
\|\omega(t)\|_{L^\infty}+\|\nabla\rho\|_{L^1_tL^\infty}\le \Phi_4(t).
$$
\end{prop}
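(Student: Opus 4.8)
The plan is to read off an $L^\infty$ bound for the angular vorticity from its transport equation and to reduce everything to the control of $\|\nabla\rho\|_{L^1_tL^\infty}$.

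First I would use that $\omega=\omega_\theta e_\theta$, where
$$\partial_t\omega_\theta+v\cdot\nabla\omega_\theta=\frac{v^r}{r}\,\omega_\theta-\partial_r\rho .$$
Propagating this identity along the flow of $v$ and applying Gronwall's lemma gives
$$\|\omega(t)\|_{L^\infty}=\|\omega_\theta(t)\|_{L^\infty}\le\Big(\|\omega_0\|_{L^\infty}+\|\partial_r\rho\|_{L^1_tL^\infty}\Big)\exp\big(\|v^r/r\|_{L^1_tL^\infty}\big).$$
By Proposition~\ref{Strong} we have $\|v^r/r\|_{L^1_tL^\infty}\le\int_0^t\Phi_2(\tau)\,d\tau\le\Phi_2(t)$, and since $|\partial_r\rho|\le|\nabla\rho|$ it only remains to bound $\|\nabla\rho\|_{L^1_tL^\infty}$, which is the second quantity in the statement, by some $\Phi_k(t)$; inserting this into the displayed inequality then yields the conclusion with $\Phi_4$.

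To estimate $\|\nabla\rho\|_{L^1_tL^\infty}$ I would argue as in the proof of Proposition~\ref{Strong}: write $\|\nabla\rho(\tau)\|_{L^\infty}\lesssim\sum_{q\ge-1}2^q\|\Delta_q\rho(\tau)\|_{L^\infty}$, introduce a time-dependent integer $N(\tau)\sim\log_2\big(2+\|\omega_\theta/r\|_{L^\infty_\tau L^{3,1}}\big)$, and split the sum at $q=N(\tau)$. For the low frequencies $q\le N(\tau)$ one uses Bernstein's inequality together with the bound $\|\rho(\tau)\|_{L^m}\le\|\rho_0\|_{L^m}$ of Proposition~\ref{Energy} (or the dispersive bound $\|\rho(\tau)\|_{L^\infty}\lesssim\tau^{-3/4}$), which contributes $\lesssim 2^{N(\tau)(1+3/m)}\|\rho_0\|_{L^m}$. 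For the high frequencies $q>N(\tau)$ one localizes the density equation, $\partial_t\rho_q+v\cdot\nabla\rho_q-\Delta\rho_q=-[\Delta_q,v\cdot\nabla]\rho$ with $\rho_q=\Delta_q\rho$, runs an energy estimate in which the transport term drops out by incompressibility while $-\Delta$ produces a damping $c\,2^{2q}\|\rho_q\|$, and then uses the smoothing effect, Duhamel, the commutator estimate of Proposition~\ref{propcomm} (together with its natural $L^p$ analogue) and the moment bounds of Proposition~\ref{prop07} to estimate $\|\Delta_q\rho(\tau)\|$; the extra integrability of $\rho_0$ and of the weighted density $x_h\rho,\ |x_h|^2\rho$ is what makes the resulting $q$-series summable and its time integral finite, all the short-time singularities produced being of the form $\tau^{-\alpha}$ with $\alpha<1$. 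Inserting $\|\omega_\theta/r\|_{L^\infty_\tau L^{3,1}}\le\Phi_2(\tau)$ from Proposition~\ref{Strong} and collecting the bounds gives $\|\nabla\rho\|_{L^1_tL^\infty}\le\Phi_k(t)$ for a suitable $k$, hence the proposition.

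The delicate point, and the reason one more derivative is genuinely "expensive" here than in Proposition~\ref{Strong}, is precisely this gain of a full derivative on $\rho$. With the $L^2$-based smoothing that sufficed for $\omega_\theta/r$, Bernstein $L^2\to L^\infty$ costs $2^{3q/2}$ per frequency while the parabolic time integral only gives back $2^{-2q}$, which on $\nabla\rho$ leaves a divergent series $\sum_q 2^{q/2}$ (for $\omega_\theta/r$ the analogous count was the convergent $\sum_q 2^{-q/2}$; equivalently, already $e^{t\Delta}\rho_0$ with $\rho_0\in L^2$ gives $\|\nabla\rho(\tau)\|_{L^\infty}\sim\tau^{-5/4}\notin L^1_{\mathrm{loc}}$). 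One is thus forced to use $\rho_0\in L^m$ with $m>6$ — both to kill the short-time singularity of the "heat part" of $\nabla\rho$ (so that $\tau^{-1/2-3/(2m)}$ is integrable) and, through the $L^6$-moment estimates of Proposition~\ref{prop07}, to keep the forcing $[\Delta_q,v\cdot\nabla]\rho$ under control — so the hypothesis $\rho_0\in L^m$ is genuinely needed and not merely convenient. A secondary but non-trivial part of the work is the bookkeeping of these short-time singularities and of the exponential tower $\Phi_k$ (one expects roughly two more exponentials than for $\omega_\theta/r$, hence $\Phi_4$). Once the estimate is known for sufficiently smooth solutions it is exactly what is needed to close the energy/compactness argument of Theorem~\ref{thm1}.
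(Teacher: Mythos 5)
Your first step---writing the maximum-principle bound for $\omega_\theta$ along the flow, invoking Proposition~\ref{Strong} for $\|v^r/r\|_{L^1_tL^\infty}$, and reducing everything to control of $\|\nabla\rho\|_{L^1_tL^\infty}$---is exactly the paper's. The divergence is in the second step, and there you have a genuine gap.

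The paper does not try to bound $\|\nabla\rho\|_{L^1_tL^\infty}$ directly in terms of $\|\omega_\theta/r\|_{L^{3,1}}$. It proves the softer coupled estimate
$$
\|\nabla\rho\|_{L^1_tL^\infty}\le C_0\big(1+t^2+\|\omega\|_{L^1_tL^\infty}\big),
$$
by localizing the $\rho$-equation in $L^p$ with $p>3$, using the parabolic gain $e^{-ct2^{2q}}$, the off-the-shelf inequality (Prop.\ 2.3 of \cite{hk})
$$
\|[\Delta_q,v\cdot\nabla]\rho\|_{L^p}\lesssim\|\rho\|_{L^p}\big((q+1)\|\omega\|_{L^\infty}+\|\nabla\Delta_{-1}v\|_{L^\infty}\big),
$$
and then summing $\sum_q 2^{q(1+3/p)}\cdot 2^{-2q}(q+1)$, which converges precisely because $p>3$. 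Plugging this back into the $\omega$-inequality gives a second, nested Gronwall in $\|\omega\|_{L^\infty}$ and hence the extra two exponentials, $\Phi_2\to\Phi_4$. No time-dependent cutoff is used or needed here: since only a $\Phi_k(t)$ bound is sought, not a logarithmic one, the splitting at $N(\tau)$ that was essential in Proposition~\ref{Strong} is superfluous.

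Your route instead relies on Proposition~\ref{propcomm} ``together with its natural $L^p$ analogue'' to close the high-frequency sum in terms of $\|\omega_\theta/r\|_{L^{3,1}}$. That analogue is the missing link, and it is not merely unproved but problematic: you correctly diagnose that the $L^2$--$L^\infty$ Bernstein $2^{3q/2}$ beats the parabolic gain $2^{-2q}$ on $\nabla\rho$, so you must pass to $L^p$ with $p>3$ to make $\sum_q 2^{q(1+3/p)}2^{-2q}$ summable. But all the product and paraproduct estimates in the proof of Proposition~\ref{propcomm} place $\omega_\theta/r$ in $L^{3}$ or $L^{3,1}$, and $L^{3,1}$ does not embed into $L^q$ for any $q>3$, so the H\"older splitting $\frac1p=\frac1{p_1}+\frac1{p_2}$ with $p_1\le 3$ forces $p_2<0$ once $p>3$. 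In other words the commutator bound you want, $\|[\Delta_q,v\cdot\nabla]\rho\|_{L^p}\lesssim\|\omega_\theta/r\|_{L^{3,1}}(\dots)$ for $p>3$, is not available from the structure the paper develops, and your attribution of the summability to the extra moment integrability of $\rho$ rather than to the choice $p>3$ (with a weaker, $\|\omega\|_{L^\infty}$-based commutator) is where the argument breaks down. The paper's device---couple to $\|\omega\|_{L^\infty}$ rather than to $\|\omega_\theta/r\|_{L^{3,1}}$ at this stage, at the price of one more Gronwall---is exactly what circumvents this obstruction, and is the key idea you are missing.
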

\begin{proof}
 From   the maximum principle  for   the equation \eqref{tourbillon}, we obtain that 
$$
\|\omega(t)\|_{L^\infty}\le\|\omega_0\|_{L^\infty}+\int_0^t\|v^r/r(\tau)\|_{L^\infty}\|\omega(\tau)\|_{L^\infty}d\tau+\int_0^t\|\nabla\rho(\tau)\|_{L^\infty}d\tau.
$$
 By combining Proposition \ref{Strong}  and the  Gronwall inequality, this yields 
$$
\|\omega(t)\|_{L^\infty}\le\Phi_3(t)\Big( 1+\int_0^t\|\nabla\rho\|_{L^\infty}d\tau\Big).
$$
Now we claim that, 
\begin{equation}\label{smooth7}
\|\nabla\rho\|_{L^1_tL^\infty}\le C_0\Big(1+t^{2}+\int_0^t\|\omega(\tau)\|_{L^\infty}d\tau\Big).
\end{equation}
Let us first finish the proof by using this  estimate.
 We deduce that 
$$
\|\omega(t)\|_{L^\infty}\le \Phi_3(t)\Big( 1+\int_0^t\|\omega(\tau)\|_{L^\infty}d\tau\Big).
$$
and thanks to  the Gronwall inequality that 
$$
\|\omega(t)\|_{L^\infty}\le \Phi_4(t).
$$
This gives in turn
$$
\|\nabla\rho\|_{L^1_tL^\infty}\le \Phi_4(t).
$$
Let us now come back to the proof of \eqref{smooth7}.  For $q\in \NN$ we set $\rho_q:=\Delta_q\rho,$ then
$$
\partial_t\rho_q+v\cdot\nabla\rho_q-\Delta\rho_q=-[\Delta_q,v\cdot\nabla]\rho
$$
Let $p\geq2$ then multiplying this equation by $|\rho_q|^{p-2}\rho_q$ and using H\"{o}lder inequality
$$
\frac{1}{p}\frac{d}{dt}\|\rho(t)\|_{L^p}^p-\int_{\RR^3}(\Delta\rho_q)|\rho_q|^{p-2}\rho_qdx\le\|\rho_q\|_{L^p}^{p-1}\big\|[\Delta_q,v\cdot\nabla]\rho\big\|_{L^p}.
$$
Now we use the generalized Bernstein inequality, see \cite{Lemar},
$$
\frac1p\, 2^{2q}\|\rho_q\|_{L^p}^p\le-\int_{\RR^3}(\Delta\rho_q)|\rho_q|^{p-2}\rho_qdx.
$$
Hence we get
$$
\frac{d}{dt}\|\rho(t)\|_{L^p}+c_p2^{2q}\|\rho_q\|_{L^p}\lesssim \big\|[\Delta_q,v\cdot\nabla]\rho\big\|_{L^p}.
$$
This gives 
\begin{equation}\label{diff1}
\|\rho_q(t)\|_{L^p}\le e^{-c_pt2^{2q}}\|\Delta_q\rho_0\|_{L^p}+\int_0^te^{-c_p2^{2q}(t-\tau)}\big\|[\Delta_q,v\cdot\nabla]\rho\big\|_{L^p}d\tau.
\end{equation}
Integrating in time implies that
$$
\|\rho_q\|_{L^1_tL^p}\lesssim 2^{-2q}\|\rho_0\|_{L^p}+2^{-2q}\big\|[\Delta_q,v\cdot\nabla]\rho\big\|_{L^1_tL^p}
$$
According to Proposition 2.3 \cite{hk} and Proposition \ref{Energy} we have
\begin{eqnarray*}
\big\|[\Delta_q,v\cdot\nabla]\rho\big\|_{L^p}&\le& C\|\rho\|_{L^p}\big((q+1)\|\omega\|_{L^\infty}+\|\nabla\Delta_{-1}v\|_{L^\infty}\big)\\
&\le&C\|\rho_0\|_{L^p}\big((q+1)\|\omega\|_{L^\infty}+\|v\|_{L^2}\big)\\
&\le&C\|\rho_0\|_{L^p}\big((q+1)\|\omega\|_{L^\infty}+C_0(1+t)\big).
\end{eqnarray*}
It follows that
\begin{eqnarray*}
\|\rho_q\|_{L^1_tL^p}\le C_0(1+t^2)2^{-2q}+C\|\rho_0\|_{L^p}(q+1)2^{-2q}\|\omega\|_{L^1_tL^\infty}.
\end{eqnarray*}
By using the  Bernstein inequality, we find  for $p>3$ that 
\begin{eqnarray*}
\|\nabla\rho\|_{L^1_tL^\infty}&\le&Ct\|\rho_0\|_{L^2}+C\sum_{q\in\NN}2^{q(1+\frac3p)}\|\rho_q\|_{L^1_tL^p}\\
&\le& C_0(1+t^2)\sum_{q\in\NN}2^{q(-1+\frac3p)}+C_0\|\omega\|_{L^1_tL^\infty}
\sum_{q\in\NN}2^{q(-1+\frac3p)}(q+1)\\
&\le&C_0(1+t^2)+C_0\|\omega\|_{L^1_tL^\infty}.
\end{eqnarray*}
This ends  the proof of the desired inequality.
\end{proof}
\subsubsection{Lipschitz bound of the velocity} We shall  now deal with the global propagation  of the sub-critical Sobolev regularities. This is basically related to the control of the Lipschitz norm of the velocity. 
\begin{prop}\label{Soblev-est}
Let $\frac52<s<3$ and $(v_0,\rho_0)\in H^s\times H^{s-2}$ and $(v,\rho)$ be a solution of the Boussinesq system \eqref{bsintro}. Then we have for every $t\geq0$
$$
\|v\|_{\widetilde L^\infty_tH^s}+\|\rho\|_{\widetilde L^\infty_tH^{s-2}}+\|\rho\|_{\widetilde L^1_tH^{s}}\le C_0(1+t)e^{C\|\nabla v\|_{L^1_tL^
\infty}}.$$
If in addition $\rho_0\in L^m$ with $ m>6$ and $|x_h|^2\rho_0\in L^2,$ then we get for every $t\geq0$
$$
\|\nabla v(t)\|_{L^\infty}\le \Phi_5(t);\quad\quad \|v\|_{\widetilde L^\infty_tH^s}+\|\rho\|_{\widetilde L^\infty_tH^{s-2}}+\|\rho\|_{\widetilde L^1_tH^{s}}\le \Phi_6(t).
$$
\end{prop}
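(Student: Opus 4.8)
The plan is to prove the proposition in two stages. The first is a \emph{conditional} estimate: bound all the norms in the statement purely in terms of the Lipschitz norm $\|\nabla v\|_{L^1_tL^\infty}$, using only frequency-localized energy estimates and standard commutator bounds (no axisymmetry is needed here). The second stage closes the loop: I would establish a logarithmic inequality controlling $\|\nabla v(t)\|_{L^\infty}$ by $\|\omega(t)\|_{L^\infty}$ and $\log(e+\|v(t)\|_{H^s})$, insert the already-proved bounds $\|\omega(t)\|_{L^\infty}\le\Phi_4(t)$ (Proposition \ref{vorticity}) and $\|v(t)\|_{L^2}\le C_0(1+t)$ (Proposition \ref{Energy}), and run a Gronwall argument on $Z(t):=\|\nabla v\|_{L^1_tL^\infty}$.

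\textbf{Stage 1.} Apply $\Delta_q$ to the velocity equation and take the $L^2$ scalar product with $\Delta_q v$; the pressure contribution vanishes since $\textnormal{div}\,\Delta_q v=0$, so
\[
\|\Delta_q v(t)\|_{L^2}\le\|\Delta_q v_0\|_{L^2}+\int_0^t\big(\|\Delta_q\rho(\tau)\|_{L^2}+\|[\Delta_q,v\cdot\nabla]v(\tau)\|_{L^2}\big)\,d\tau.
\]
Multiplying by $2^{qs}$, summing in $q$, and using the commutator estimate $\sum_q2^{qs}\|[\Delta_q,v\cdot\nabla]v\|_{L^2}\lesssim\|\nabla v\|_{L^\infty}\|v\|_{H^s}$ (valid for $s>1$), I would get $\|v\|_{\widetilde L^\infty_tH^s}\le\|v_0\|_{H^s}+\|\rho\|_{\widetilde L^1_tH^s}+C\int_0^t\|\nabla v(\tau)\|_{L^\infty}\|v(\tau)\|_{H^s}\,d\tau$. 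For the density I would exploit the heat smoothing: from $\partial_t\Delta_q\rho+v\cdot\nabla\Delta_q\rho-\Delta\Delta_q\rho=-[\Delta_q,v\cdot\nabla]\rho$ and an $L^2$ estimate,
\[
\|\Delta_q\rho(t)\|_{L^2}\le e^{-ct2^{2q}}\|\Delta_q\rho_0\|_{L^2}+\int_0^te^{-c(t-\tau)2^{2q}}\|[\Delta_q,v\cdot\nabla]\rho(\tau)\|_{L^2}\,d\tau.
\]
Integrating in time, multiplying by $2^{qs}$ and summing gives $\|\rho\|_{\widetilde L^1_tH^s}\lesssim\|\rho_0\|_{H^{s-2}}+\|\nabla v\|_{L^1_tL^\infty}\|\rho\|_{\widetilde L^\infty_tH^{s-2}}$ (the $+2$-derivative gain of the semigroup converts $H^{s-2}$ data into an $\widetilde L^1_tH^s$ bound), while keeping only the supremum in time with weight $2^{q(s-2)}$ yields $\|\rho\|_{\widetilde L^\infty_tH^{s-2}}\le\|\rho_0\|_{H^{s-2}}+C\int_0^t\|\nabla v\|_{L^\infty}\|\rho\|_{H^{s-2}}\,d\tau$; here $s-2\in(\tfrac12,1)$, so the commutator estimate is in its admissible range. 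A Gronwall on the last line bounds $\|\rho\|_{\widetilde L^\infty_tH^{s-2}}$ by $\|\rho_0\|_{H^{s-2}}e^{C\|\nabla v\|_{L^1_tL^\infty}}$; feeding this back and applying Gronwall to the velocity inequality (together with $\|v\|_{L^\infty_tL^2}\le C_0(1+t)$) produces the first assertion
\[
\|v\|_{\widetilde L^\infty_tH^s}+\|\rho\|_{\widetilde L^\infty_tH^{s-2}}+\|\rho\|_{\widetilde L^1_tH^s}\le C_0(1+t)e^{C\|\nabla v\|_{L^1_tL^\infty}}.
\]

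\textbf{Stage 2.} From the Biot--Savart law one has $\|\Delta_q\nabla v\|_{L^\infty}\lesssim\|\Delta_q\omega\|_{L^\infty}\le\|\omega\|_{L^\infty}$ for $q\ge0$ (the localized operator $\nabla(-\Delta)^{-1}\textnormal{curl}$ has an $L^1$ kernel uniformly in $q$), $\|\Delta_{-1}\nabla v\|_{L^\infty}\lesssim\|v\|_{L^2}$, and by Bernstein $\|\Delta_q\nabla v\|_{L^\infty}\lesssim2^{q(5/2-s)}\big(2^{qs}\|\Delta_q v\|_{L^2}\big)$. Splitting $\|\nabla v\|_{L^\infty}\le\sum_{-1\le q\le N}\|\Delta_q\nabla v\|_{L^\infty}+\sum_{q>N}\|\Delta_q\nabla v\|_{L^\infty}$ and choosing $N\simeq\log_2(e+\|v\|_{H^s})$ (here $s>\tfrac52$ makes the high-frequency tail summable) gives the classical logarithmic inequality $\|\nabla v\|_{L^\infty}\lesssim1+\|v\|_{L^2}+\|\omega\|_{L^\infty}\log(e+\|v\|_{H^s})$. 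Inserting the Stage~1 bound, $\log(e+\|v(t)\|_{H^s})\le C_0\big(1+\log(1+t)+Z(t)\big)$, and using $\|\omega(t)\|_{L^\infty}\le\Phi_4(t)$ and $\|v(t)\|_{L^2}\le C_0(1+t)$, I obtain $\|\nabla v(t)\|_{L^\infty}\le C_0(1+t)+\Phi_4(t)\big(1+Z(t)\big)$, hence $Z(t)\le C_0(1+t^2)+\int_0^t\Phi_4(\tau)\big(1+Z(\tau)\big)\,d\tau$. Gronwall and the composition rules for the tower functions $\Phi_k$ give $Z(t)\le\Phi_5(t)$, whence $\|\nabla v(t)\|_{L^\infty}\le\Phi_5(t)$; plugging $Z(t)\le\Phi_5(t)$ back into the conditional estimate of Stage~1 yields $\|v\|_{\widetilde L^\infty_tH^s}+\|\rho\|_{\widetilde L^\infty_tH^{s-2}}+\|\rho\|_{\widetilde L^1_tH^s}\le C_0(1+t)e^{C\Phi_5(t)}\le\Phi_6(t)$.

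\textbf{Main difficulty.} Essentially all the hard work has already been invested in Proposition \ref{vorticity}, so what remains is a Beale--Kato--Majda-type closure. The points requiring care are: checking that the commutator estimates are used within their admissible index ranges ($s>1$ for the $v$-equation, $s-2\in(\tfrac12,1)$ for the $\rho$-equation); correctly manipulating the Chemin--Lerner norms $\widetilde L^p_tH^\sigma$ rather than $L^p_tH^\sigma$ so that time integration commutes with the dyadic summation in the heat-smoothing step; and the bookkeeping of the iterated exponentials $\Phi_k$, each Gronwall step costing one extra composition.
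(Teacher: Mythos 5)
Your proposal is correct and follows essentially the same two-stage strategy as the paper: frequency-localized energy estimates for $v$ and $\rho$ (with the heat smoothing used to convert the $H^{s-2}$ density data into an $\widetilde L^1_t H^s$ bound, and the transport commutators bounded by $\|\nabla v\|_{L^\infty}$), followed by the classical logarithmic Biot--Savart inequality, insertion of Proposition \ref{vorticity}, and a Gronwall argument on $\|\nabla v\|_{L^1_tL^\infty}$. One small inaccuracy in your Stage 1: the $(1+t)$ prefactor in the first assertion does not come from $\|v\|_{L^\infty_tL^2}$, but from the low-frequency block $\Delta_{-1}\rho$, for which the heat smoothing gains no derivatives and which one bounds crudely by $\|\Delta_{-1}\rho\|_{L^1_tL^2}\le Ct\|\rho_0\|_{L^2}$; you should make that explicit when summing the $\widetilde L^1_tH^s$ pieces.
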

\begin{Rema}
We point out that we can extend the results of  Proposition \ref{Soblev-est} to higher regularities $s\geq3$ but for the sake of  simplicity  we restrict ourselves here to the case of $s<3$. 
\end{Rema}
\begin{proof}
We localize in frequency the equation of the velocity. For $q\in \NN\cup\{-1\}$ we set $v_q:=\Delta_q v$ and $\rho_q:=\Delta_q\rho.$
$$
\partial_t v_q+v\cdot\nabla v_q+\nabla\pi_q=\rho_q e_z-[\Delta_q,v\cdot\nabla]v.
$$
Thus taking the $L^2$-scalar product with $v_q$ and using the incompressibility of $v$ and $v_q$  we get
$$
\frac{d}{dt}\|v_q(t)\|_{L^2}\le\|\rho_q\|_{L^2}+\|[\Delta_q,v\cdot\nabla]v\|_{L^2}.
$$
Integrating in time we obtain
$$
\|v_q(t)\|_{L^2}\le\|v_q(0)\|_{L^2}+\|\rho_q\|_{L^1_tL^2}+\|[\Delta_q,v\cdot\nabla]v\|_{L^1_tL^2}.
$$
Thus we get
$$
\|v\|_{\widetilde L^\infty_t H^s}\le \|v_0\|_{H^s}+\|\rho\|_{\widetilde L^1_t H^s}+\|(2^{qs}\|[\Delta_q,v\cdot\nabla]v\|_{L^1_tL^2})_{q}\|_{\ell^2}.
$$
We will use the commutator estimate, see for instance Lemma B.5 of  \cite{rdd},
$$
\|(2^{qs}\|[\Delta_q,v\cdot\nabla]v\|_{L^1_tL^2})_{q}\|_{\ell^2}\le C\int_0^t\|\nabla v(\tau)\|_{L^\infty}\|v(\tau)\|_{H^s}d\tau.
$$
Putting together these estimates and using Gronwall inequality yield
\begin{equation}\label{dif9}
\|v\|_{\widetilde L^\infty_t H^s}\le \big(\|v_0\|_{H^s}+\|\rho\|_{\widetilde L^1_t H^s}\big)e^{C\|\nabla v\|_{L^1_tL^\infty}}.
\end{equation}
Using the estimate \eqref{diff1} we get for $q\in\NN$
$$
\|\rho_q\|_{L^\infty_tL^2}+2^{2q}\|\rho_q\|_{L^1_tL^2}\le C\|\rho_q(0)\|_{L^2}+\|[\Delta_q,v\cdot\nabla]\rho\|_{L^1_tL^2}.
$$
Therefore we find
\begin{eqnarray*}
\|\rho\|_{\widetilde L^\infty_t H^{s-2}}+\|\rho\|_{\widetilde L^1_t H^s}&\le&\|\Delta_{-1}\rho\|_{L^1_tL^2}+ \|\rho_0\|_{H^{s-2}}+\|(2^{q(s-2)}\|[\Delta_q,v\cdot\nabla]\rho\|_{L^1_tL^2})_{q}\|_{\ell^2}\\
&\le& Ct\|\rho_0\|_{L^2}+ \|\rho_0\|_{H^{s-2}}+\|(2^{q(s-2)}\|[\Delta_q,v\cdot\nabla]\rho\|_{L^1_tL^2})_{q}\|_{\ell^2}.
\end{eqnarray*}
Since $-1<s-2<1$ then we have the estimate, see \cite{rdd}, 
$$
\|(2^{q(s-2)}\|[\Delta_q,v\cdot\nabla]\rho\|_{L^1_tL^2})_{q}\|_{\ell^2}\le C\int_0^t\|\nabla v(\tau)\|_{L^\infty}\|\rho(\tau)\|_{H^{s-2}}d\tau.
$$
Consequently,
$$
\|\rho\|_{\widetilde L^\infty_t H^{s-2}}+\|\rho\|_{\widetilde L^1_t H^s}\le C_0(1+t)+C\int_0^t\|\nabla v(\tau)\|_{L^\infty}\|\rho(\tau)\|_{H^{s-2}}d\tau.
$$
By Gronwall inequality
\begin{equation}\label{dif88}
\|\rho\|_{\widetilde L^\infty_t H^{s-2}}+\|\rho\|_{\widetilde L^1_t H^s}\le C_0(1+t)e^{C\|\nabla v\|_{L^1_tL^\infty}}.
\end{equation}
Combining this estimate with \eqref{dif9} gives the desired estimates.

Now to get a global bound for Lipschitz norm of the velocity we use the classical logarithmic estimate: for $s>\frac32$
$$
\|\nabla v\|_{L^\infty}\lesssim \|v\|_{L^2}+ \|\omega\|_{L^\infty}\log(e+\|v\|_{H^s}).
$$
Combining this estimate with the first result of Proposition \ref{Soblev-est} and Proposition \ref{vorticity}
$$
\|\nabla v\|_{L^\infty}\le \Phi_4(t)\Big(1+\int_0^t\|\nabla v(\tau)\|_{L^\infty}d\tau\Big).
$$
It follows from Gronwall inequality that
$$
\|\nabla v(t)\|_{L^\infty}\le \Phi_5(t).
$$
Plugging this estimate into \eqref{dif9} and  \eqref{dif88} gives
$$
\|v\|_{\widetilde L^\infty_t H^s}+\|\rho\|_{\widetilde L^\infty_t H^{s-2}}+\|\rho\|_{\widetilde L^1_t H^s}\le \Phi_6(t).
$$ 
This ends the proof of the proposition. 
\end{proof}
\section{Proof of the main result}
\label{sectionproof}
The proof of the existence part of Theorem \ref{thm1} can be done in a classical way by smoothing out the initial data as follows
$$
v_{0,n}=S_nv_0=2^{3n}\chi(2^n\cdot)\star v_0,\,\rho_{0,n}=S_n\rho_0=2^{3n}\chi(2^n\cdot)\star\rho_0
$$
where $S_n$ is the cut-off in frequency defined in the preliminaries. Since $\chi$ is radial then the functions $v_{0,n}$ and $\rho_{0,n}$ remain axisymmetric. Moreover this family is uniformly bounded in the space of initial data: this is obvious in  Sobolev and Lebesgue spaces but it remains to check the uniform boundedness of the horizontal moment of the density. We will show that 
\begin{equation}\label{eq321}
\sup_{n\in\NN}\||x_h|^2\rho_{n,0}\|_{L^2}\le C\big(\|\rho_0\|_{L^2}+\||x_h|^2\rho_{0}\|_{L^2}\big).
\end{equation}
For this purpose we write
\begin{eqnarray*}
|x_h|^2|\rho_{n,0}(x)|&=&|x_h|^2\Big|\int_{\RR^3}\chi(2^{n}(x-y))\rho(y)dy\Big|\\
&\le&2\int_{\RR^3}|x_h-y_h|^2|\chi|(2^{n}(x-y))|\rho|(y)dy+2\Big|\int_{\RR^3}|\chi|(2^{n}(x-y))|y_h|^2\rho(y)dy\Big|\\
&\le&2\,2^{-2n}(2^{3n}\chi_1(2^n\cdot)\star\rho)(x)+2(|\chi|(2^n\cdot)\star(|y_h|^2\rho))(x)
\end{eqnarray*}
with $\chi_1(x)=|x_h|^2|\chi(x).$ From convolution laws we get
$$
\||x_h|^2|\rho_{n,0}\|_{L^2}\le C2^{-2n}\|\rho\|_{L^2}+C\||x_h|^2\rho_0\|_{L^2}.
$$
This achieves the proof of \eqref{eq321}. Now, by using standard arguments based on   the a priori estimates described in  Proposition \ref{Soblev-est}, Proposition \ref{vorticity}  and Proposition \ref{prop07} we can construct  a unique global solution $(v_n,\rho_n)$  in the following space
$$
v_n\in\mathcal{C}(\RR_+; H^s)\cap L^1(\RR_+;W^{1,\infty})\quad \hbox{and}\quad\rho_n\in \mathcal{C}(\RR_+; H^{s-2})\cap L^1(\RR_+;W^{1,\infty}).
$$
The control is uniform with respect to the parameter $n$. Therefore we can prove the  strong convergence of 
 a subsequence of  $(v_n,\rho_n)_{n\in\NN}$  to  some  $(v,\rho)$  belonging to the same space and satisfying the initial value problem. It remains to prove the uniqueness problem. This gives the existence of a  solution.

 The uniqueness will be proven in the following space
$$
(v,\rho)\in\mathcal{X}:= \Big(\mathcal{C}(\RR_+;L^2)\cap L^{1}(\RR_+; W^{1,\infty})\Big)^2.
$$

Let $(v^i,\rho^i)\in \mathcal{X}, 1\leq i\leq 2$ be  two solutions of the
 system \eqref{bsintro} with the same initial \mbox{data $(v_0,\theta_0)$} and denote
 $\delta v=v^2-v^1,\delta\rho=\rho^2-\rho^1$. Then
\begin{equation}\label{diff}
\left\{ \begin{array}{ll}
\partial_t\delta v+v^2\cdot\nabla\delta v+\nabla\Pi=-\delta v\cdot\nabla v^1+\delta\rho\, e_z
\\
\partial_t \delta\rho +v^2\cdot\nabla\delta\rho -  \Delta\,  \delta \rho=
 -\delta v\cdot\nabla \rho^1
\\
{\mathop{\rm div}}\,  v^i=0.
\end{array}
\right.
\end{equation}
Taking the $L^2$-scalar product of the first equation with $\delta v$ and integrating by parts gives
$$
\frac12\frac{d}{dt}\|\delta v(t)\|_{L^2}^2\le\|\nabla v^1\|_{L^\infty}\|\delta v\|_{L^2}^2+\|\delta\rho\|_{L^2}\|\delta v\|_{L^2}.
$$
Consequently,
$$
\frac{d}{dt}\|\delta v(t)\|_{L^2}\le\|\nabla v^1\|_{L^\infty}\|\delta v\|_{L^2}+\|\delta\rho\|_{L^2}.
$$
Using  the Gronwall inequality yields
$$
e^{-\|\nabla v^1\|_{L^1_tL^\infty}}\|\delta v(t)\|_{L^2}\le \Big(\|\delta v_0\|_{L^2}+\int_0^te^{-\|\nabla v^1\|_{L^\tau_tL^\infty}}\|\delta\rho(\tau)\|_{L^2}d\tau\Big).
$$
By the same computations we get
$$
\|\delta \rho(t)\|_{L^2}\le\|\delta \rho_0\|_{L^2}+\int_0^t\|\nabla\rho^1(\tau)\|_{L^\infty}\|\delta v(\tau)\|_{L^2}d\tau.
$$
It suffices now to put together these estimates and to use  the  Gronwall inequality.

\newpage

\appendix

\section{Nash-De Giorgi estimates for convection-diffusion equations}
\label{appendix}
\begin{lem}
\label{Nashlem}
 Consider the equation 
 \begin{equation}
 \label{td}
 \partial_{t}f + u  \cdot \nabla  f - \Delta f = \nabla \cdot F + G , \quad  t>0, \quad x \in \mathbb{R}^3, \quad f(0, x)= f_{0}(x).
 \end{equation}
   Consider   $p,\,  q, \, p_{1}, \, q_{1},  \in [1, +\infty], r\in[2,+\infty]$  with 
   $${2 \over p}+ {3 \over q}<1,\quad{ 2 \over p_{1} }+ {3 \over q _{1}} <2.
  $$
   There exists $C>0$ such that  for  every smooth divergence free vector field $u$,
    for every $F\in L^p_{T} L^q$ and for every $f_{0} \in L^r$,  the solution of \eqref{td} satisfies
     the estimate: for every $t\in]0,T],$
   \begin{eqnarray}
   \label{Nash}
  \nonumber  \|f(t)  \|_{L^\infty} &\leq& C \Big( 1 + {1 \over t^{ 3  \over 2 r } }\Big)  { \|f_{0} \|_{L^r}  } +C \Big( 1 +  \sqrt{T}^{ 1- ( {2 \over p } + {3 \over q} ) } \Big)  { \|F \|_{L^p_{T} L^q}  } \\&+& C \Big( 1 +  \sqrt{T}^{ 2- ( {2 \over p_{1} } + {3 \over q_{1}} ) } \Big) 
      { \|G \|_{L^{p_{1}}_{T} L^{q_{1} } }  } .
   \end{eqnarray}
   \end{lem}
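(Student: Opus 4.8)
The plan is to prove the $L^2\to L^\infty$ smoothing estimate by the standard De Giorgi iteration, exploiting the fact that the transport term $u\cdot\nabla f$ drops out of every energy estimate because $u$ is divergence free. First I would reduce by scaling and a splitting of the source: write $f=f^{(1)}+f^{(2)}+f^{(3)}$ where $f^{(1)}$ solves the homogeneous equation with data $f_0$, $f^{(2)}$ carries the $\nabla\cdot F$ forcing with zero data, and $f^{(3)}$ carries the $G$ forcing with zero data. Each piece is treated separately, and for each we may assume (by replacing $f$ with $f/\lambda$ for an appropriate constant $\lambda$ built from the relevant norm of the data) that the right-hand quantity has size one; the final estimate is then obtained by summing and undoing the normalisation. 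Because the equation is invariant under the parabolic rescaling $f(t,x)\mapsto f(\mu^2 t,\mu x)$ up to corresponding rescalings of $u,F,G$, it suffices to prove the bound at a fixed time, say $t=1$ (for the $f_0$ and short-time parts) and then patch the time intervals $[0,1]$, $[1,T]$ together, which is where the two terms $1+\sqrt T^{\,1-(2/p+3/q)}$ and $1+\sqrt T^{\,2-(2/p_1+3/q_1)}$ come from.

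The core is the iteration itself. Fix a time $t_0\in(0,T]$ and set $t_k = t_0(1-2^{-k-1})$, truncation levels $M_k = M(1-2^{-k})$ for a constant $M$ to be chosen, and $f_k := (f-M_k)_+$. Multiplying \eqref{td} by $f_k$ and integrating over $[t_{k-1},t_0]\times\RR^3$, the convection term $\int u\cdot\nabla f\, f_k = \int u\cdot\nabla(\tfrac12 f_k^2 + M_k f_k)=0$ since $\operatorname{div}u=0$; the diffusion term gives $\int_{t_{k-1}}^{t_0}\|\nabla f_k\|_{L^2}^2$; the forcing terms $\nabla\cdot F$ and $G$ are handled by Hölder and Young on the set $\{f>M_k\}$, using the hypotheses $2/p+3/q<1$ and $2/p_1+3/q_1<2$ to absorb the powers of the measure of that set (these are exactly the scaling-subcritical conditions that make the nonlinear recursion close). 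Using a time cutoff supported away from $t_{k-1}$ one controls $\sup_{t\in[t_k,t_0]}\|f_k(t)\|_{L^2}^2 + \int_{t_k}^{t_0}\|\nabla f_k\|_{L^2}^2$ by $C\,2^{Ck}\,M^{-\beta}$ times a power of the energy $U_{k-1} := \sup\|f_{k-1}\|_{L^2}^2 + \int\|\nabla f_{k-1}\|_{L^2}^2$, after estimating $|\{f>M_k\}|\le (M_k-M_{k-1})^{-2}\int f_{k-1}^2 \lesssim 2^{2k}M^{-2}U_{k-1}$. Interpolating $L^2$ in space-time between $L^\infty_t L^2_x$ and $L^2_t \dot H^1_x\hookrightarrow L^2_t L^6_x$ upgrades this to a genuinely superlinear recursion $U_k \le C\,b^{k}\,M^{-\gamma}\,U_{k-1}^{1+\delta}$ with $b>1$, $\delta>0$. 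The standard lemma on fast geometric decay then gives $U_k\to 0$ provided $M$ is taken larger than a fixed constant $C_0$ times (a power of) the relevant norm of the data plus the forcing, whence $f\le M$ a.e. on $\{t=t_0\}$; applying the same argument to $-f$ yields $\|f(t_0)\|_{L^\infty}\le M$.

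I expect the main obstacle to be purely bookkeeping: tracking the precise powers of $t_0$ (equivalently $\mu$) and of $T$ that appear when one runs the scaling and glues the intervals $[0,1]$ and $[1,T]$, so that the exponents come out exactly as $\tfrac{3}{2r}$ for the initial data and $1-(\tfrac2p+\tfrac3q)$, $2-(\tfrac2{p_1}+\tfrac3{q_1})$ for the two forcings. A secondary technical point is the correct use of the time-dependent cutoff so that the initial-data piece sees the $t_0^{-3/(2r)}$ factor (from $\|f^{(1)}(t_0)\|_{L^\infty}\lesssim t_0^{-3/(2r)}\|f_0\|_{L^r}$ after a first iteration that turns $L^r$ data into $L^2$ data, or alternatively by interpolating the De Giorgi argument directly from $L^r$). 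All of these are routine modifications of the classical argument; the key structural input — that the drift $u$ never contributes because it is divergence free — is exactly what the remark preceding the statement emphasises, so no control of $u$ beyond $\operatorname{div}u=0$ is needed. Full details follow the presentation in \cite{Giorgi,Nash,Moser}.
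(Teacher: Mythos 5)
Your plan is essentially the paper's argument: decompose into the three subproblems (initial data $f_0$, the $\nabla\cdot F$ forcing, the $G$ forcing), run the De~Giorgi--Nash level-set iteration in which the drift contributes nothing because $\operatorname{div} u=0$ makes the convection term vanish in each truncated energy estimate, and recover the $T$-dependence by parabolic rescaling. The paper's only minor variants are that it treats $G$ by writing $G=\nabla\cdot F$ with $F\in W^{1,q_1}\hookrightarrow L^{q_1^*}$ so only the $\nabla\cdot F$ iteration is needed, and it obtains the $L^r\to L^\infty$ bound for the initial-data piece by interpolating the iterated $L^2\to L^\infty$ estimate against the trivial $L^\infty\to L^\infty$ maximum principle rather than iterating directly from $L^r$.
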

  \begin{proof}
  Since the equation is linear, we can study separately the three problems
   \begin{equation}
   \label{td*} 
 \left\{\begin{array}{ll} \mathcal{P}f= \nabla \cdot F, \\
  f(0, x)=0 \end{array}\right., \quad
 \left\{\begin{array}{ll} \mathcal{P}f=  G, \\
  f(0, x)=0 \end{array}\right., \quad
  \left\{\begin{array}{ll} \mathcal{P}f=0, \\
  f(0, x)=f_{0}(x) \end{array}\right.,
   \end{equation}
   where we have set $\mathcal{P}f=   \partial_{t}f + u  \cdot \nabla  f - \Delta f $.
   
   Let us start with the first problem in \eqref{td*}. We shall prove that
    there exists $C>0$ such that 
    for every $F$  with  $ \|F \|_{L^p_{1} L^q} \leq 1$, we have the estimate
    \begin{equation}
    \label{T=1}
     \|f \|_{L^\infty_{1}L^\infty} \leq C.
    \end{equation}
  Once this estimate, is proven, the estimate involving $F$ in \eqref{Nash} will just follow by
   a scaling argument.
   
    The first step is to   use  the  standard $L^q$ a priori estimate (obtained by multiplying
    by  
    $\mathcal{P} f $  by $ |f|^{q-1} \mbox{sign }f$). Since $u$
     is divergence free, we have that 
 $$    {d \over dt } \Big(  {1 \over q} \|  f(t) \|_{L^q}^q  \Big)+  (q- 1)  \int_{\mathbb{R}^2} |\nabla f |^2 |f|^{q-2}\, dx
     \leq   (q-1)   \int_{\mathbb{R}^3 } | F |   \,  |\nabla f |\, | f |^{q-2} \, dx.$$
       From the Young inequality and the Holder inequality (note that since $2/p + 3/q <1$, 
        we necessarily  have that $q>3$ and $p>2$) this yields
  $$  {d \over dt } \Big(  {1 \over q} \|  f(t) \|_{L^q}^q  \Big) \leq  (q-1) \|F \|_{L^q}^2 \, \|f \|_{L^q}^{q-2}$$
   and hence by integration in time, we obtain since $q>3$ that 
      \begin{equation}
      \label{L2f}  \|f\|_{L^{\infty}_{1}L^q} \leq C_{q} \Big( \int_{0}^1   \| F(t) \|_{L^q}^2 dt \Big)^{1 \over 2} \leq
      C_{q}   \| F\|_{L^p_{1}L^q} \leq C_{q}
        \end{equation}
        where  $C_{q}$ depends only on $q$.
        To improve this estimate that is to go from the above $L^q$ estimate to an $L^\infty$
         estimate, we shall  follow the  De-Giorgi, Nash iteration argument.
    For $M>0$ to be chosen, let us take  a positive  increasing sequence $(M_{k})_{k \geq 0}$ 
     such that  $M_{k} \leq M$
      and  $M_{k}$ converges towards $M$.   A good choice is  for example
     \begin{equation}
     \label{Mk}
      M_{k}= M( 1 -{ 1 \over k+ 1  } ).
     \end{equation}
       We shall use the standard notation $x_{+} = \mbox{max}(x, 0)$.
     Since $u$ is divergence free,  we obtain  the   level set
      energy estimate
   \begin{eqnarray*} {d \over dt } \Big(  {1 \over 2} \| (f -M_{k})_{+}(t) \|_{L^2}^2  \Big)+ \| \nabla( f - M_{k})_{+} \|_{L^2}^2
   &  \leq  &   \int_{ f \geq M_{k}} | F |  \,  |\nabla f | \, dx \\
    &  \leq  &  \Big(  \int_{ f \geq M_{k} } |F|^2 \Big)^{1 \over 2} \| \nabla ( f- M_{k})_{+}\|_{L^2} 
    \end{eqnarray*}
    where the last inequality comes from Cauchy-Schwarz. By using the Young inequality, 
     we thus obtain
     \begin{equation}
     \label{Uk}
       U_{k} \leq    \int_{0}^1 \int_{ f \geq M_{k} } |F|^2  \, dx dt
      \end{equation}
    where 
    $$ U_{k}=  \| (f -M_{k})_{+} \|_{L^\infty_{1} L^2}^2 +  \| \nabla( f -M_{k})_{+} \|_{L^2_{1} L^2}^2.$$
    The main idea is to prove that the right-hand side of \eqref{Uk} can estimated
     by  a power of  $ U_{k-1}$ strictly larger than $1$. 
      By using the Holder inequality, we first get that
      \begin{equation}
      \label{Uk1}  U_{k} \leq  \int_{0}^1  \,  \| F (t)  \|_{L^q}^2   m_{k}(t)^{  1 - {2 \over q } }\, dt
       \leq  \| F \|_{L^p_{1}L^q}^2  \Big( \int_{0}^1 m_{k}(t)^{  (1 - {2 \over q} ) ({p \over p-2})}  \, dt \Big)
       ^{ 1 - {2 \over p}  }.\end{equation}
       where $m_{k}(t) = | \{x, \,  f(t) \geq M_{k} \}  |.$
     To estimate $m_{k}(t)$, we note that   if $f(t, x) \geq M_{k}$  then
     $$ f(t, x)  -M_{k - 1} \geq M_{k} - M_{k-1} \geq 0$$ and thus we have
      \begin{equation}
      \label{mk0} {\bf 1}_{f(t,x) \geq M_{k} } \leq  {   (k+ 1) ^2 \over M  } (f(t,x) -M_{k-1})_{+}.
      \end{equation}  
  This yields
  \begin{equation}
  \label{mk}
   m_{k}(t) \leq   { (k+ 1) ^{2 m } \over M^m} \|(f(t) -M_{k-1})_{+} \|_{L^m}^m
  \end{equation} 
  for every $m \geq 1$. We shall choose $m$ carefully below.
  By plugging this last estimate in  \eqref{Uk1}, we get
 \begin{equation}
 \label{Uk2} U_{k} \leq  \| F \|_{L^p_{1}L^q}^2   \Big({(k+ 1)^2 \over M }\Big)^{m( 1 - {2 \over p } )}\Big(  \int_{0}^1    \|(f(t) -M_{k-1})_{+} \|_{L^m}^{m(  1 - { 2 \over q }) ( { p \over p-2} )  } \Big)^{ 1 - {2 \over p } }\end{equation}
  Now let us notice that   if $\alpha \geq 1 $ and $\beta \in [2, 6]$ are such that 
  $   {2 \over \alpha}+ { 3 \over \beta } \geq {3 \over 2 }$ then we have
  $$ \|(f(t) -M_{k-1})_{+} \|_{L^\alpha_{1}L^\beta}^2 \leq  U_{k-1}.$$
  Indeed  the control of $U_{k-1}$ gives a control of the $L^\infty_{1}L^1$ and
   the $L^2_{1}H^1$ norm. By Sobolev embedding this  gives a control
    of the $L^2_{1}L^6$ norm and then   
     the inequality  follows by standard interpolation in Lebesgue spaces. 
  Consequently, to achieve our program,  we need to choose $m \in [2, 6]$ such that
  \begin{equation}
  \label{contrainte} m ( 1 - { 2 \over q } )  >2, \quad  2  { { 1 - {2 \over p } }  \over m \big( 1 -  {2 \over q}  \big) }
   + {3 \over m } \geq {3 \over 2 }.
   \end{equation}
   The first constraint  can be satisfied as soon as
    $2 / (1 - 2/q)<6$ which is equivalent to $q>3$ while the second constraint can be satisfied as soon as
     $$  2 \big(  1 - {2 \over p }  \big) + 3( 1 - {2 \over q }) \geq m ( 1 - { 2 \over q } )  >  { 3\over 2 }\cdot  2= 3$$
 which is equivalent to 
 $$ { 2 \over p } + { 3 \over q } <1.$$
  Consequently,   since we have  $q>3$ and $ 2/p + 3/q<1$  by assumption
   we can  choose $m$ such that  the constraint  \eqref{contrainte} are matched.
    This yields that there exists $\gamma >1$ such that
    $$  U_{k} \leq  \| F \|_{L^p_{1}L^q}^2   \Big({(k+ 1)^2 \over M }\Big)^{m( 1 - {2 \over p } )} U_{k-1}^\gamma
     \leq   \Big({(k+ 1)^2 \over M }\Big)^{m( 1 - {2 \over p } )} U_{k-1}^\gamma, \quad \forall k \geq 1.$$
      If  $U_{0}$ is   sufficiently small, this yields that $ \lim_{k \rightarrow + \infty } U_{k}= 0$.
      Since, we have  from \eqref{Uk1},  the  Tchebychev inequality and the energy inequality  \eqref{L2f}
       that
     $$ U_{0} \leq   \Big( \int_{0}^1 m_{0}(t)^{  (1 - {2 \over q} ) ({p \over p-2})}  \, dt \Big)
       ^{ 1 - {2 \over p}  } \leq    \Big({ \| f \|_{L^\infty_{1} L^q} \over M-1} \Big)^
       {   q - {2  } } \leq  \Big({  C_{q}  \over M-1} \Big)^
       {  q - {2  } },  $$
        we can indeed  make $U_{0}$ arbitrarily small by taking 
         $M$ sufficiently large and thus $\lim_{k \rightarrow + \infty} U_{k}=0$. From Fatou's Lemma, we obtain that
          for every $t \in [0, 1]$
          $$ \int_{\mathbb{R}^3} (f(t,x) -M)_{+} \, dx \leq 0$$
           and therefore that almost everywhere 
         $$ f(t, x) \leq M.$$
      By changing $f$ into $-f$,we obtain in a similar way that $f \geq -  M$ almost everywhere
       and thus \eqref{T=1} is proven.
        To obtain  the part of estimate \eqref{Nash} involving $F$ if $T \geq 1$
         we can use a change of  scale  argument. Let us set
        $  K  \tilde{f}(\tau,X)= f(  T \,\tau ,\sqrt{T}\, X )$
         for $K>0$ to be chosen. Then  we have $ K\|\tilde{f}\|_{L^\infty_{1}L^\infty} = \| f \|_{L^\infty_{T} L^\infty}$ and $\tilde{f}(\tau, X)$ solves the equation
         $$ \partial_{\tau} \tilde{f} + \tilde{u} \cdot \nabla_{X} \tilde{f} - \Delta_{X} \tilde{f}= \nabla_{X} \cdot  \tilde{F}$$
          where $\tilde{u}$ is still divergence free and 
          $$ \tilde{F}(\tau, X)=  { \sqrt{T} \over K } F(\tau T, \sqrt{T}\, X).$$
           In particular, with the choice
           $$ K=  \sqrt{T}^{ 1- ( {2 \over p } + {3 \over q} ) } \|F \|_{L^p_{T}L^q},$$
            we get that $\|\tilde{F} \|_{L^p_{1} L^q}=1$  and thus that
             $$  \| f \|_{L^\infty_{T} L^\infty}  =  K\|\tilde{f}\|_{L^\infty_{1}L^\infty}
              \leq M  K = M  \sqrt{T}^{ 1- ( {2 \over p } + {3 \over q} ) } \|F \|_{L^p_{T}L^q}.$$
            This  gives the part of the estimate \eqref{Nash} involving $F$. 
            
      Let us turn to the study of the second problem in \eqref{td*} in order to get the part of the estimate
       \eqref{Nash} involving $G$. The estimate can be deduced from the previous one
         when $q_{1}<+ \infty$ which is the interesting case (when $q_{1}= \infty$, the estimate
          is a direct   consequence of the Maximum principle).
        Indeed, if $G \in {L_{T}^{p_{1}}}L^{q_{1}}$, we can write
         $G = \nabla \cdot F$ with $ F \in  {L_{T}^{p_{1}}} W^{1, q_{1} }  \subset L_{T}^{p_{1}} L^{q_{1}^*}$
          by Sobolev embedding (where  $q_{1}^*= { 3 q_{1} \over 3 - q_{1}}.$) and moreover, 
           we have
           $$ \| F \|_{L_{T}^{p_{1} }  L^{ q_{1}^* }} \leq  \| G \|_{L^{p_{1}}_{T}L^{q_{1}}}.$$
          Consequently, by  using the estimate that we have already proven, we get that
          $$ \|f \|_{L^\infty_{T}L^\infty} \leq  M   \sqrt{T}^{ 1- ( {2 \over p_{1} } + {3 \over q_{1}^* } )  }
           \|F \|_{L_{T}^{p_{1}}L^{q_{1}^*} }$$
            if  $2/p_{1}+ 3/ q_{1}^* <1$.  This  gives the claimed estimate.
            
        It remains to study the third problem in \eqref{td*} that  is the problem with no source term
         but a nontrivial initial data. Again, 
          we shall first  prove that there exists $M>0$ such that for  every  $f_{0} \in L^r$ with
           $ \|f_{0}\|_{L^2} \leq 1$, we have the estimate
           \begin{equation}
          \label{T=10}
           \sup_{t \geq 1 } \|f (t)  \|_{L^ \infty} \leq M. 
           \end{equation}
         the standard energy estimate gives that
        \begin{equation}
        \label{L2init}
         \| f \|_{L^\infty L^2}^2 +  \|f \|_{L^2L^2}^2 \leq \|f_{0} \|_{L^2}.
         \end{equation}
         To improve this estimate, we shall also use the De Giorgi-Nash iteration method. We  take a sequence
          $M_{k}$ as previously, and we also choose a sequence of times $T_{k}= { 1 - {1 \over k+1}}$
           which tends to $1$. The energy estimate for  $( f-M_{k})_{+}$ yields that
            for every $t, \, s$ with $t \geq T_{k} \geq s$, we have
            $$  \sup_{t \geq T_{k}} \| (f- M_{k})_{+}(t)  \|_{L^2}^2 + \int_{s}^{+ \infty} \| \nabla ( f - M_{k})_{+}) (\tau)
             \|_{L^2}^2 \, d \tau \leq \| (f-M_{k})_{+} (s) \|_{L^2}^2$$
              and hence, by integrating in $s$ for $T_{k-1} \leq s \leq T_{k}$, we obtain that
         \begin{equation}
         \label{Uk0}   U_{k} \leq { (k+ 1)^2} \int_{T_{k-1}}^{+ \infty}  \| (f-M_{k})_{+} (s) \|_{L^2}^2\, ds
         \end{equation}
           with 
           $$U_{k} =   \sup_{t \geq T_{k}} \| (f- M_{k})_{+}(t)  \|_{L^2}^2 + \int_{T_{k}}^{+ \infty} \| \nabla ( f - M_{k})_{+}) (\tau)
             \|_{L^2}^2 \, d \tau.$$
              The aim is again to estimate the right-hand side of \eqref{Uk0} by  a power of $U_{k-1}$
               strictly greater than $1$.  By using the same notations as previously, we
                 get from  \eqref{mk0} that 
             \begin{equation}
             \label{finit0}  U_{k} \leq  (k+ 1)^2  \Big({  (k+ 1 )^2 \over M }\Big)^{4\over 3 } \int_{T_{k-1}}^{+ \infty} \int_{\mathbb{R}^3} (f - M_{k-1})_{+}^{10 \over 3 }  \, dt dx \leq 
        (k+ 1)^2  \Big({  (k+ 1 )^2 \over M }\Big)^{4\over 3 } U_{k-1}^{4 \over 3}.\end{equation}
         Since  we have from \eqref{L2init}
  that $U_{0} \leq \big\| \big( f_{0}-(M - 1)\big)_{+}\big\|_{L^2}^2 \leq \|f_{0}\|_{L^2}^2$, $U_{1}$ can be made arbitrarily small
   by  taking $M$ sufficiently large and hence we get from  \eqref{finit0} that $\lim_{k \rightarrow + \infty}U_{k}= 0$.
     This proves that $f(t,x) \leq M$ for $t \geq 1$ and  then we get  \eqref{T=10}
      by changing $f$ into $-f$.
       We have thus proven that  for every $t \geq 1$ the linear operator $f \mapsto f(t, \cdot)$
        is bounded from $L^2$ into $L^\infty$ with norm smaller than $M$. Since by the standard
         maximum principle, it is also bounded from $L^\infty$ to $L^\infty$ with norm $1$, 
          we get  by interpolation that it also  maps $L^r$ to $L^\infty$ for every $r \geq 2.$  
         To get the  claimed estimate in \eqref{Nash} for $t \leq 1 $, it suffices to use again a scaling argument.

         This ends the proof.

 \end{proof}

\section{Proof of Lemma \ref{commu}}
\label{appendixB}
   \begin{proof}
Set $\phi=\mathcal{F}^{-1}h,$ then we have by definition and from Taylor formula
\begin{eqnarray*}
\big[h(\textnormal{D}),f \big]g&=&\int_{\RR^d} \phi(x-y)g(y)\big(f(y)-f(x)  \big)dy\\
&=&\int_0^1\int_{\RR^d} g(y)\Phi(x-y)\cdot\nabla f(x+t(y-x))dydt,
\end{eqnarray*}
with  $\Phi(x)=x\phi(x).$ Let $\alpha,\beta\in]0,1[$ with $\alpha+\beta=1.$ Using H\"older inequality and a change of variables  we get with $\Phi_t=t^{-3}\Phi(\frac{x}{t})$
\begin{eqnarray*}
\big|\big[h(\textnormal{D}),f \big]g(x)\big|&\le& \int_0^1\int_{\RR^d} \big(|g(y)||\Phi(x-y)|^\alpha\big) \big( |\nabla f(x+t(y-x))||\Phi(x-y)|^\beta \big)dydt\\&\le&\Big(\int_{\RR^3}|\Phi(x-y)||g(y)|^{\frac1\alpha}\big)dy\Big)^\alpha\int_0^1\int_{\RR^3}|\Phi_t(y)||\nabla f(x-y)|^{\frac1\beta}dy\Big)^\beta\\
&\le&\big(|\Phi|\star|g|^{\frac1\alpha}\big)^\alpha(x)\,\int_0^1\big(|\Phi_t|\star|\nabla f|^{\frac1\beta}\big)^{\beta}(x)dt.
\end{eqnarray*}
Let $p_1,p_2\in[1,\infty]$ such 
\begin{equation}\label{eq771}
\alpha p_1, \beta p_2\geq 1\quad\hbox{and}\quad\frac1p=\frac{1}{p_1}+\frac{1}{p_2}.
\end{equation} Then by H\"{o}lder inequality
$$
\big\|\big[h(\textnormal{D}),f \big]g\big\|_{L^p}\le\big\||\Phi|\star|g|^{\frac1\alpha}\big\|_{L^{\alpha p_1}}^\alpha\int_0^1\big\| |\Phi_t|\star|\nabla f|^{\frac1\beta} \big\|_{L^{\beta p_2}}^\beta dt.
$$
We choose $p_1,p_2,\alpha$ and $\beta$ such that
\begin{equation}\label{eq772}
\alpha m\geq1,\beta\rho\geq1,\,1+\frac{1}{\alpha p_1}=\frac1r+\frac{1}{\alpha m}\quad\hbox{and}\quad1+\frac{1}{\beta p_2}=\frac1r+\frac{1}{\beta\rho}
\end{equation}
then the classical convolution laws give
\begin{eqnarray*}
\big\|\big[h(\textnormal{D}),f \big]g\big\|_{L^p}&\le&\|\Phi\|_{L^r}^\alpha\|g\|_{L^m}\|\nabla f\|_{L^\rho}\int_0^1\|\Phi_t\|_{L^r}^\beta dt\\
&\le&\|\Phi\|_{L^r}\|g\|_{L^m}\|\nabla f\|_{L^\rho}\int_0^1t^{3\beta(-1+\frac{1}{r})}dt.
\end{eqnarray*}
This last integral is finite despite that 
\begin{equation}\label{ez1}
\beta<\frac13\frac{r}{r-1}\cdot
\end{equation} 
 Now let us check that the set given by conditions \eqref{eq771}, \eqref{eq772} and \eqref{ez1}  is not empty. First for the case $r=1$ we choose $p_1=m, p_2=\rho, \alpha=\frac1m$ and $\beta=1-\alpha.$ Let us now discuss the case $r>1.$ From \eqref{eq772}
$$
\alpha=\frac{r}{r-1}\big(\frac{1}{m}-\frac{1}{p_1}\big).
$$
To get $\alpha\in[\frac1m,1[$ we must choose $p_1$ such that
\begin{equation}\label{eq7774}
\frac1p-\frac1\rho<\frac{1}{p_1}\le\frac{1}{rm}.
\end{equation}
The condition $\beta\rho\geq1$ is equivalent, by the use of $1+\frac1p=\frac1m+\frac1\rho+\frac1r$, to
\begin{equation}\label{eq7775}
\frac1p-\frac{1}{r\rho}\le\frac{1}{p_1}.
\end{equation} 
The condition $\alpha p_1\geq1$ is automatically satisfied from \eqref{eq7774} since
$$
\alpha p_1\geq r\alpha m\geq  r\geq1.
$$
The condition $\beta p_2\geq1$ is also a consequence of \eqref{eq7774} and \eqref{eq7775}. Indeed,  from the value of $\alpha$ and $\frac1p=\frac{1}{p_1}+\frac{1}{p_2}$, this condition is equivalent to
$$
\frac1p-\frac{r}{2r-1}\frac1\rho\leq\frac{1}{p_1}\cdot
$$
This condition is weaker than \eqref{eq7775}.
We can easily check that \eqref{eq7774} and \eqref{eq7775} are equivalent to
$$
\frac1p-\frac{1}{r\rho}\leq\frac{1}{p_1}\le\frac{1}{rm}.
$$ The set of $p_1$ described by the above condition is nonempty if 
$$
\frac{1}{rm}-\frac1p+\frac{1}{r\rho}\geq0.
$$
Using the identity $1+\frac1p=\frac1m+\frac1\rho+\frac1r$, this is satisfied  under the condition $p\geq r$.  The condition \eqref{ez1}  is equivalent to
$$
\frac{1}{p_1}<\frac13+\frac1p-\frac1\rho.
$$ Now there is a compatibility between  this condition and \eqref{eq7775} if
$$
3(1-\frac1r)<\rho.
$$
This ends  the proof of Lemma \ref{commu}.
\end{proof}



\begin{thebibliography}{9999}



\bibitem{ah} H. \textsc{Abidi}, T. \textsc{Hmidi}:
\emph{On the global well-posedness for Boussinesq System.} J.
Diff. Equa.,   \textbf{233,   1} (2007),   p. 199-220.



\bibitem{A-H-K} H. \textsc{Abidi}, T. \textsc{Hmidi}, K. \textsc{Sahbi}:
\emph{On the global well-posedness for the axisymmetric Euler equations,} to appear in Mathematische Annalen



\bibitem{bkm}\textsc{J. T. Beale, T. Kato and A. Majda}, \textit{ Remarks on the Breakdown of Smooth Solutions for the 3-D Euler Equations}. Commun. Math. Phys. \textbf{94}, p. 61-66, (1984).


\bibitem{b} J.-M. \textsc{Bony}:
\emph{Calcul symbolique et propagation des singularit\'es pour les \'equations aux d\'eriv\'ees partielles non lin\'eaires.}
Ann. de l'Ecole Norm. Sup.,
\textbf{14} (1981), p. 209-246.

\bibitem{Brenier} Y. \textsc{Brenier}:
\emph{Optimal transport,   convection,   magnetic relaxation and generalized Boussinesq equations.}   preprint 2008,   arXiv:0801.1088.


\bibitem{ha} D. \textsc{Chae}:
\emph{Global regularity for the $2$-D Boussinesq equations with partial viscous terms.}
Advances in Math.,   {\bf203},    2 (2006), p. 497-513.





\bibitem{che1}  J.-Y. \textsc{Chemin}:
\emph{Perfect incompressible Fluids.}
Oxford University Press 1998.

\bibitem{danchinpoche} R.  \textsc{Danchin}
\emph{Poches de tourbillon visqueuses.} J. Math. Pures Appl. (9) 76 (1997), no. 7, 609--647.






\bibitem{rdd} R. \textsc{Danchin}: \emph{The inviscid limit for density-dependent incompressible fluids,}  Annales  Fac.  Sci.  Toulouse SŽr. 6, 15 no. 4 (2006), p. 637-688.
\bibitem{rd} R. \textsc{Danchin}:
\emph{Axisymmetric incompressible flows with bounded vorticity.}
Russian Math.   Surveys {\bf 62} (2007), no 3, p. 73-94.

\bibitem{dp1} R. \textsc{Danchin},  M. \textsc{Paicu}:
 \emph{Global well-posedness issues for the inviscid Boussinesq system with Yudovich's type data.}
 arXiv:0806.4081[math.AP] 25 Jun (2008), to appear in Comm. Math. Physics.

\bibitem{dp} R. \textsc{Danchin},  M. \textsc{ Paicu}:
R. Danchin,   M. Paicu:
\emph{Le th\'eor\`eme de Leary et le th\'eor\`eme de Fujita-Kato pour le syst\`eme de Boussinesq partiellement visqueux.}
Bulletin de la S. M. F. {\bf136}  (2008), p. 261-309.




\bibitem{Giorgi} E.  \textsc{De Giorgi}. Sulla differenziabilit`a e lÕanaliticit`a delle estremali degli
integrali multipli regolari. \emph{Mem. Accad. Sci. Torino. Cl. Sci. Fis. Mat. Nat.}
(3), 3:25Ð43, 1957.

\bibitem{Feireisl} E. \textsc{Feireisl}, A. \textsc{Novotny}: The Oberbeck-Boussinesq Approximation
 as a singular limit of the full Navier-Stokes-Fourier system. J. Math. Fluid. Mech. {\bf 11} (2009), 274-302.
 
 \bibitem{hmidipoche} T. \textsc{Hmidi} 
\emph{R\'egularit\'e h\"old\'erienne des poches de tourbillon visqueuses.} J. Math. Pures Appl. (9) 84 (2005), no. 11, 1455--1495.

\bibitem{hk1} T. \textsc{Hmidi}, S. \textsc{Keraani }:
\emph{On the global well-posedness of the two-dimensional Boussinesq system with a zero diffusivity.}
Adv. Differential Equations 12 (2007),   no. 4,  p. 461-480


\bibitem{hk} T. \textsc{Hmidi},   S. \textsc{Keraani}:
\emph{On the global well-posedness of the Boussinesq system with zero viscosity,} to appear in Indiana Univ. Math. Journal.

\bibitem{HKR1} T. \textsc{Hmidi},   S. \textsc{Keraani}, F. \textsc{Rousset}:
\textit{Global well-posedness for  an Euler-Boussinesq  system with critical dissipation.}
Preprint 2009,   arXiv:0903.3747.

 \bibitem{HKR2} T. \textsc{Hmidi}, S. \textsc{Keraani}, F. \textsc{Rousset}:
 \emph{Global well-posedness for a Navier-Stokes-Boussinesq  system with critical dissipation.}   Preprint 2009, arXiv:0904.1536v1.
 
 \bibitem{HR1}  T.  \textsc{Hmidi},  F. \textsc{Rousset}: 
 \emph{Global well-posedness for  the Navier-Stokes-Boussinesq  system with axisymmetric data.}   Preprint 2009,arXiv:0912.135.
\bibitem{HZ} T.  \textsc{Hmidi}, M.  \textsc{Zerguine}:\emph{ On the global well-posedness of the Euler-Boussinesq system with fractional dissipation},
          To appear in Physica D


\bibitem{LA} O. A. \textsc{Ladyzhenskaya}:
\emph{Unique solvability in large of a three-dimensional Cauchy problem for the Navier-Stokes equations in the presence of axial symmetry.}
 Zapisky Nauchnych Sem. LOMI {\bf7} (1968), p. 155-177.


\bibitem{Lemar}  P.-G. \textsc{Lemari\'e}:
\emph{Recent developments in the Navier-Stokes problem.}
CRC Press, 2002.




  
 \bibitem{Moser} J. \textsc{Moser}:  \emph{A new proof of  De Giorgi's theorem concening
  the regularity problem for elliptic differential equations}
   Comm Pure Appl. Math.  {\bf 13}(1960), 457-468. 
  
  \bibitem{Nash} J. \textsc{Nash}:  \emph{Continuity of solutions of parabolic and elliptic equations.}
   Amer. J. Math. {\bf 80}(1958), 931-954.


\bibitem{Neil}
R.   O'Neil, {\it Convolution operators and L(p,q) spaces},
 Duke Math.   J.   {\bf 30} (1963), 129-142.  


\bibitem{Stein}  E. M. \textsc{Stein}:
\emph{Harmonic analysis: real-variable methods, orthogonality and oscillatory integrals.} Princeton University Press, Princeton, New Jersey 1993.

\bibitem{Taira}
T.   \textsc{Shirota}, T. \textsc{Yanagisawa}:
\emph{Note on global existence for axially symmetric
solutions of the Euler system.}
Proc.   Japan Acad.   Ser.   A Math.   Sci.   {\bf 70} (1994), no.   10, p. 299-304.



\bibitem{Ukhovskii}
M.   R.  \textsc{Ukhovskii}, V.   I. \textsc{Yudovich}:
\emph{Axially symmetric flows of ideal and
viscous fluids filling the whole space.}
Prikl.   Mat.   Meh.   {\bf 32} (1968), no.   1, p. 59-69.






\end{thebibliography}
\end{document}